%% file: BHR_arXiv.tex
\documentclass[11pt]{amsart}

\usepackage{amsmath,amssymb,amsfonts,amsthm,amscd}
\usepackage[left=2.7cm,right=2.7cm,top=2.7cm,bottom=2.7cm]{geometry}
\usepackage{fancyhdr}
\usepackage{setspace}
\usepackage{ragged2e}
\usepackage[T1]{fontenc}
\usepackage[utf8]{inputenc}
\usepackage{eurosym}
\usepackage[english]{babel}
\usepackage{lmodern}
\usepackage{enumitem}
\usepackage{caption}
\usepackage{subcaption}
\usepackage{float}
\usepackage{graphicx}
\usepackage[export]{adjustbox}
\usepackage{cleveref}
\usepackage{pdfpages}
\usepackage{cancel}

\input{settings.tex}

\def \R {{\Bbb R}}

\def \s {{\Bbb S}}
\def \B {{\Bbb B}}

\newtheorem{theorem}{Theorem}[section]
\newtheorem*{th*}{Theorem~\ref{th:cat}}
\newtheorem*{th**}{Theorem~\ref{th:k}}
\newtheorem{proposition}[theorem]{Proposition}
\newtheorem{corollary}[theorem]{Corollary}
\newtheorem{lemma}[theorem]{Lemma}
\newtheorem{claim}[theorem]{Claim}
\newtheorem{remark}[theorem]{Remark}

\title{Tensile minimal surfaces and thread boundary problems}

\thanks{The third author was partially supported by the IMAG–Maria de Maeztu grant CEX2020-001105-M / AEI / 10.13039/501100011033, MINECO/MICINN/FEDER grant no. PID2023-150727NB-I00 and Junta de Andalucia grant P18-FR4049.}

\author{Romane Boutillier}
\address{Laboratoire Navier, École nationale des ponts et chaussées, Univ. Gustave Eiffel, CNRS, Champs-sur-Marne, France}
\email{romane.boutillier@univ-eiffel.fr}

\author{Laurent Hauswirth}
\address{Laboratoire d’Analyse et de Mathématiques Appliquées UMR8050, Univ. Gustave Eiffel, Champs-sur-Marne, France}
\email{laurent.hauswirth@univ-eiffel.fr}

\author{Magdalena Rodríguez}
\address{Departamento de Geometría y Topología, Universidad de Granada, Spain}
\email{magdarp@ugr.es}

\date{}

\begin{document}

\begin{abstract}
Minimal surfaces bounded by cables or threads arise naturally in tensile architecture: a membrane under uniform tension takes the shape of a minimal surface, and its
flexible, inextensible boundary lies along an asymptotic line of constant geodesic
curvature. Such configurations have been studied experimentally since the 1960s at
the Institute for Lightweight Structures in Stuttgart and more recently realized in
gridshells built along networks of asymptotic and geodesic curves. Motivated by this
architectural context, we construct two new families of embedded minimal surfaces
bounded by finitely many asymptotic arcs of constant curvature, using the
Plateau-conjugate method applied to the solution of a partially-free boundary problem for minimal disks that meet the unit sphere orthogonally along the free boundary component. 

For every integer $m \geq 3$, we prove the
existence of a one‑parameter family of embedded minimal annuli, called tensile
catenoids, whose boundary components each consist of $m$ asymptotic arcs of constant
curvature that meet at cusps. As the parameter varies, the family degenerates from a
planar configuration to a union of $m$ minimal disks. 

For every integer $k\geq 3$, we prove the existence of an embedded minimal surface with the topology of a sphere minus $k$
disks, called a tensile $k$‑noid, each of whose $k$ boundary components consists of four
asymptotic arcs joined by cusps. 

Both families are symmetric with respect to a horizontal plane and possess several vertical planes of symmetry. Embeddedness
follows from showing that the fundamental piece obtained by conjugation is a graph contained in the region delimited by the planes of symmetry.
\end{abstract}

\maketitle

\renewcommand{\thepage}{\arabic{page}}

\section{Introduction}

In architecture, minimal surfaces belong to the structural family of tensile surfaces. They developed in the 20th century \cite{berger_light_1996}, particularly through the work of Frei Otto (one of his famous large-scale projects is the roof of the Munich stadium, shown in \cref{fig:munich}). 
Certain aspects of membrane structures explain their popularity in architecture: they cover large spans, are suitable for temporary installations due to their ease of assembly and disassembly, and are lightweight and inexpensive.

\begin{figure}[hbt!]
\centering
\includegraphics[width = 0.5\textwidth]{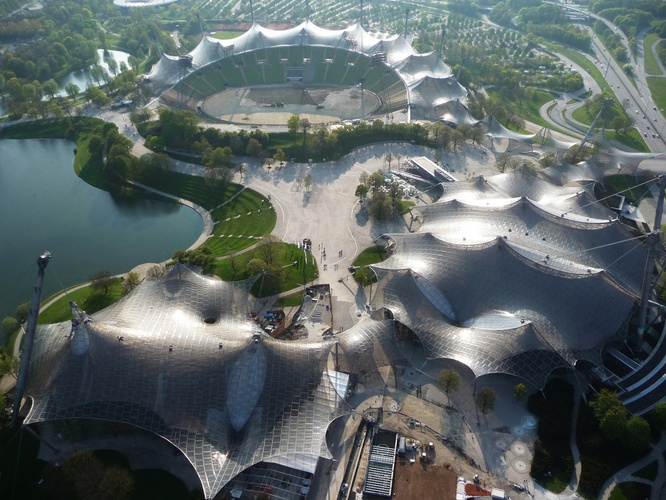}
\caption{Munich Olympic Stadium. Image © Atelier Frei Otto Warmbronn}
\label{fig:munich}
\end{figure}

The study and exploration of minimal surfaces was used as a design tool for architects and engineers, especially at the Institute for Lightweight Structures (IL). This renowned structural research and training center was founded in 1964 in Stuttgart by Fritz Leonhardt, with Frei Otto as its first director. IL architects and engineers have extensively studied and modeled minimal surfaces using small-scale soap films (see \cref{fig:soapILEK}) and wire models (see \cref{fig:cablesILEK}). In the design process, soap models are used to generate the geometry (measured using photogrammetric methods), while wire models are used to perform simulations and mechanical measurements (in particular, measuring the tension in the cables). Some studies have eventually resulted in the construction of structures, such as the IL building, constructed in 1966 in Stuttgart (see \cref{fig:ILEKaerial}). Many pictures and models can be found in the literature \cite{otto_tensile_1973, DenkenModellen, IL18, Otto_Moma, vrachliotis_maquette_2020}. Minimal surfaces were explored and built for pavilions, airports, stadiums, and sculptures (for more information on the typologies of tensile surfaces, the mechanical constraints, and the practical details of the implementation of these structures, the reader is invited to refer to \cite{schlaich_tensile_1989}).

\begin{figure}[htb!]
\centering
\begin{subfigure}[t][][b]{0.24\textwidth}
\centering
    \includegraphics[height = 4cm]{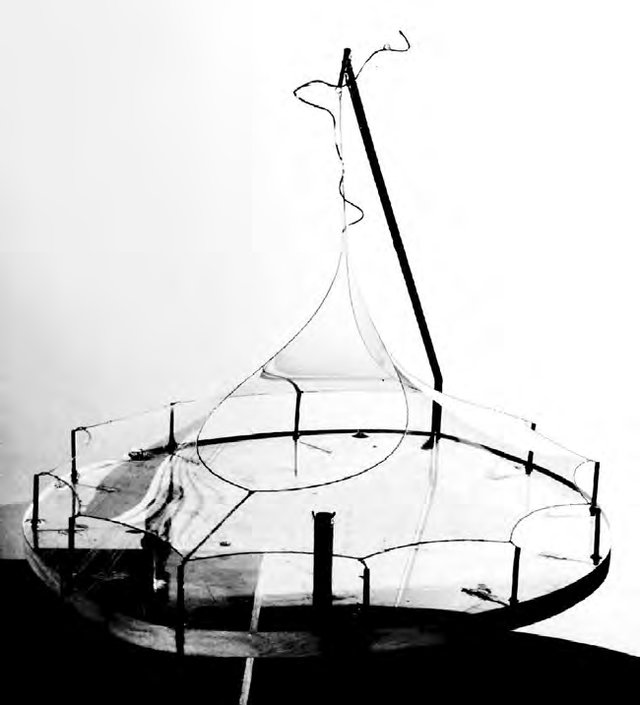}
    \caption{Soap film model of the IL, photo IL archive.}
    \label{fig:soapILEK}
\end{subfigure}
\hfill
\begin{subfigure}[t][][b]{0.37\textwidth}
\centering
    \includegraphics[height = 4cm]{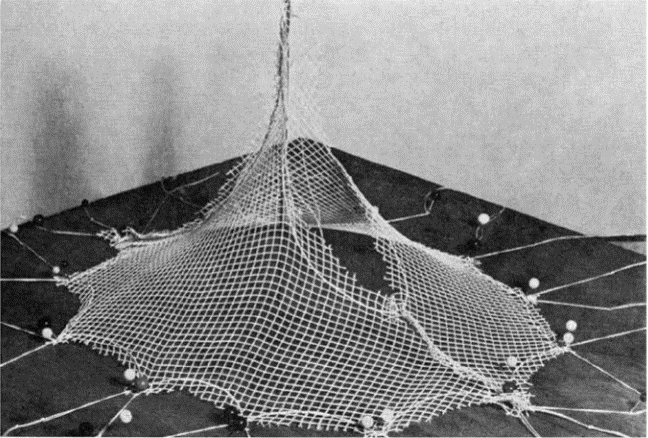}
    \caption{Wire model of the IL, photo \cite{leonhardt_deutsche_1968}.}
    \label{fig:cablesILEK}
\end{subfigure}
\hfill
\begin{subfigure}[t][][b]{0.35\textwidth}
\centering
    \includegraphics[height = 4cm]{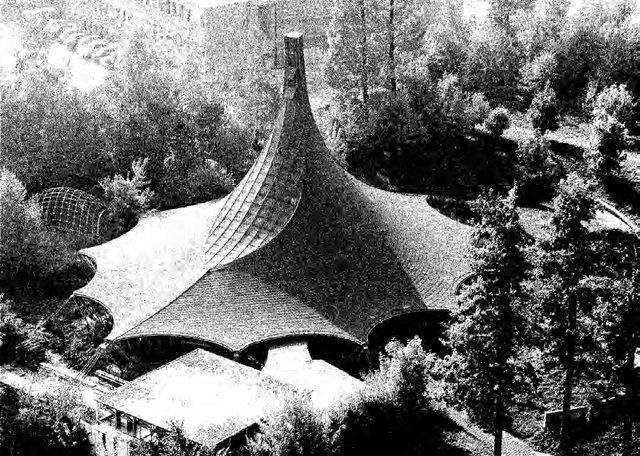}
    \caption{IL, photo IL archive.}
    \label{fig:ILEKaerial}
\end{subfigure}
\caption{Study and building of the Institute for Lightweight Structures.}
\label{fig:ILEK}
\end{figure}

A variety of shapes were created, with several boundary conditions (rigid, flexible, free boundaries). In their book focusing on bubbles \cite{IL18}, the IL team cites the work of Hildebrandt and Nitsche, and modelled their sketches \cite{dierkes_minimal_2010} with soap films (\cref{fig:freeBoundaryIL}).

\begin{figure}[h]
\centering
    \includegraphics[width = 0.3\textwidth]{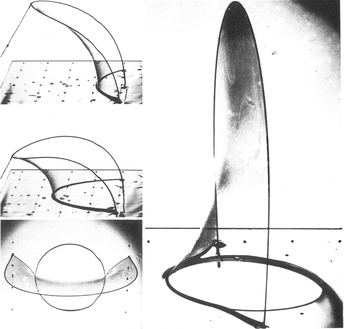}
    \caption{Free boundary soap models, source \cite{IL18}}
    \label{fig:freeBoundaryIL}
\end{figure}

Simple mechanical calculations (the equilibrium of a soap film and its boundary) allow us to recover two properties of this variational problem: namely, that a surface uniformly stretched in all directions is a minimal surface, and that an inextensible wire bounding a minimal surface is an asymptotic line of constant geodesic curvature \cite{dierkes_minimal_1992-2,sehlstrom_tensioned_2021}.

\paragraph{\textbf{Forces in minimal surfaces}}

The stresses in membrane structures lie within the tangent plane: compression, tension, shear, but no bending. In certain force configurations, these structures take the shape of minimal surfaces. Indeed, the equilibrium of a portion of surface, as illustrated in~\cref{Fig:pressure}, in the normal direction, is given by the commonly known Young-Laplace equation. It relates the tangential forces in an inextensible surface to the surface curvature. It is given in a local conformal parametrisation by \cref{eq:eq3}, where $\lambda$ is the conformal factor, $T_{11}$ and $T_{22}$ are the normal forces on the surface boundary, $T_{12}$ is the tangential force on the surface boundary, $p$ is the applied normal load, and $e, f, g$ are the coefficients of the second fundamental form. A detailed demonstration can be found in \cite{frey_analyse_2003} or \cite{ventsel_thin_2001}.

\begin{align}
    T_{11}e+2T_{12}f+T_{22}g = p \lambda^2 \label{eq:eq3}
\end{align}

\begin{figure}[!htb]
     \centering
     \begin{subfigure}{0.45\textwidth}
         \includegraphics[width=\linewidth]{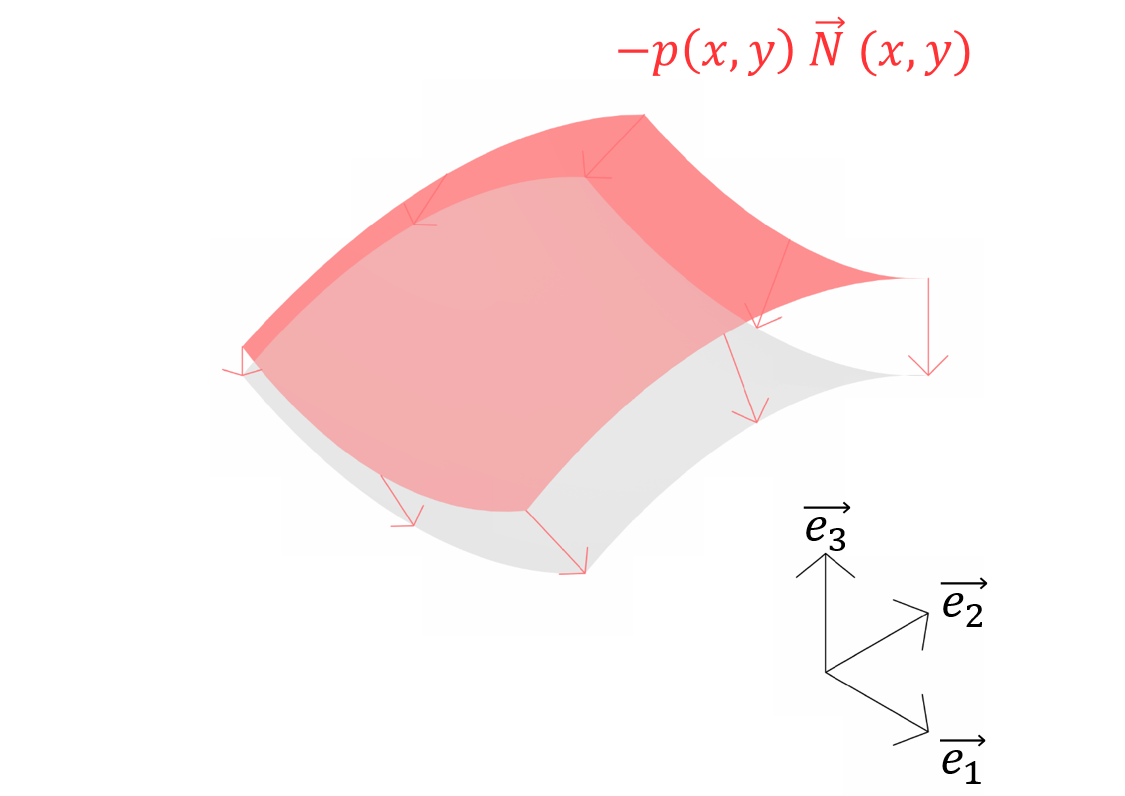}
     \caption{Surface element loaded with a pressure $p$}
     \label{Fig:pressure}
     \end{subfigure}
     \hfill
\begin{subfigure}{0.45\textwidth}
    \includegraphics[width =\linewidth]{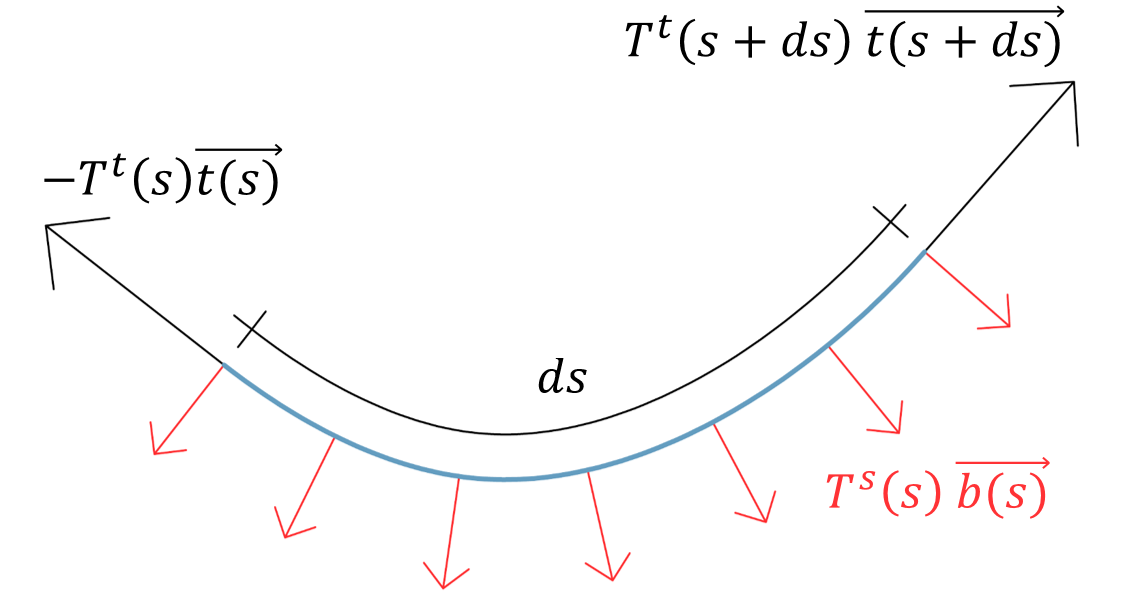}
    \caption{Equilibrium of a portion of thread.}
    \label{fig:equilibre_fil}
\end{subfigure}
     \caption{Equilibrium of minimal surfaces}
\end{figure}

Without loading ($p=0$) and for an isotropic stress $T_{11} = T_{22} = T$ and $T_{12}=0$, the Young-Laplace equation (\cref{eq:eq3}) becomes $e+g=0$ and the surface is minimal. Note that in reality, architectural structures are subject to out-of-plane loads, but pre-tension loads are the dominant loads for this type of structure.

\paragraph{\textbf{Equilibrium of a thread boundary}}
\label{sec:equi_thread}
\Cref{fig:munich,fig:ILEK} display cable boundaries: the boundary is a piecewise flexible wire with a fixed length. This problem is related to the thread problem \cite{dierkes_minimal_1992-2}.

A thread portion as illustrated in~\cref{fig:equilibre_fil} is considered: it is loaded at its ends by the rest of the thread and in the normal direction by the soap film. The tension $T^t$ in the thread is carried by the tangent to the wire. The surface tension $T^S$ is contained in the tangent plane to the surface, and the surface can slide along the thread; it therefore applies no tangential force along the thread. The thread is considered to have no thickness and no weight: it is thus not subject to gravity load. It is also totally flexible and inextensible. For a thread in equilibrium, all forces sum up to zero and the following system is obtained:
\begin{align}
\frac{dT^t}{ds} & = 0 \label{eq:uniform}  \longrightarrow \text{the tension is uniform} \\ 
T^t(s) \kappa_n(s) & = 0 \label{eq:asymp} \longrightarrow \text{the thread is an asymptotic line} \\ 
T^t(s) \kappa_g(s) + T^S(s) & = 0  \label{eq:k_cste} \longrightarrow \kappa_g \text{ is constant, because the film tension is constant}
\end{align}

For the same soap film (same soap, same film thickness, and thus $T^S$ is constant), if one pulls the wire, this changes its curvature inversely proportionally (\cref{eq:k_cste}): shape and forces are mutually dependent.\newline

Cables were used to build the IL building (see \cref{fig:ILEKaerial}) and the roof of the Munich Olympic Stadium (\cref{fig:munich}). Lines of curvature appear as good candidates for the location of cables on minimal surfaces: they are principal stress lines for prestress and for a uniform normal load (in the direction of principal stress, there is no shear but only normal stress).

The role of engineers and architects is to find a compromise between fidelity to the intended form, ease of fabrication and construction, and cost minimisation. These constraints can be translated into geometric constraints, as studied in the architectural geometry community. In this context, a new construction system was recently proposed by Eike Schling \cite{schling_design_2018}: building minimal surfaces (and more generally surfaces of negative Gaussian curvature) by placing beams along the asymptotic lines. From a construction standpoint, this makes it possible to build the beams from straight lamellas, which intersect at 90 degrees on minimal surfaces—thus simplifying the design of the node and preventing skew panels. Structurally, when a beam is applied vertically on a surface and follows an asymptotic line, it is twisted and bent in the direction $\overrightarrow{n_a}$ (see \cref{fig:strong_axis}), tangent to the surface. When it is placed horizontally and along a geodesic line, it is twisted and bent in the direction~$\overrightarrow{n_g}$, which is normal to the surface. Networks of asymptotic and geodesic curves can thus be constructed from bent straight lamellas, which explains their relevance in architectural geometry. Finally, as tensioned cables bordering the minimal surfaces are asymptotic lines, discretising the surface with asymptotic lines allows the grid to be aligned with the edge and the edge to be integrated with the structural frame. This construction system was developed and used in several pavilions \cite{eikeschling_asymptotic_2016,schling_design_2018,schling_geometry-based_2022}. A model and a structure are shown in \cref{fig:schling} and \cref{fig:insideout}. In the first case, the boundary is also an asymptotic line and is well integrated into the grid; in the other, the boundary is not an asymptotic line and intersects the grid at varying angles.

\begin{figure}[htb!]
\centering
\begin{subfigure}[t]{0.48\textwidth}
    \includegraphics[height = 5cm]{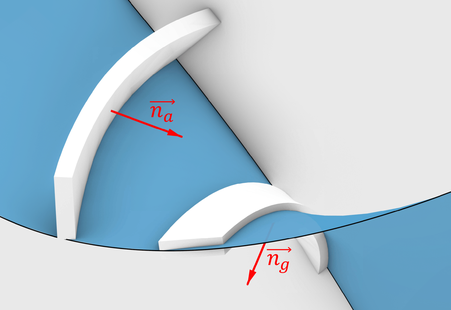}
    \caption{A beam placed on an asymptotic line and normally to the surface is bent in the tangent direction (left), a beam placed on a geodesic line and tangentially to the surface is bent in the normal direction.}
    \label{fig:strong_axis}
\end{subfigure}
\hfill
\begin{subfigure}[t]{0.48\textwidth}
    \centering
    \includegraphics[height = 5cm]{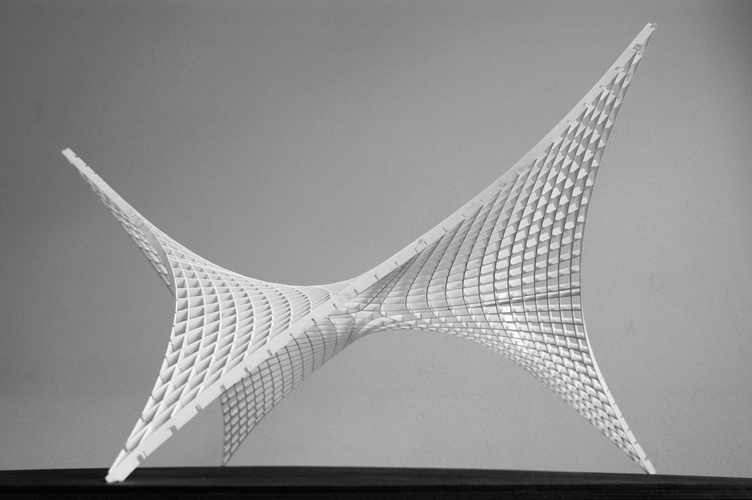}
    \caption{Asymptotic gridshell model, source \cite{eikeschling_asymptotic_2016}. This is not a minimal surface.}
    \label{fig:schling}
\end{subfigure}
\caption{Building with beams on asymptotic and geodesic curves.}
\label{fig:asymp_surface}
\end{figure}

\begin{figure}[htb!]
\centering
\begin{subfigure}[t]{0.48\textwidth}
    \centering
    \includegraphics[height = 5cm]{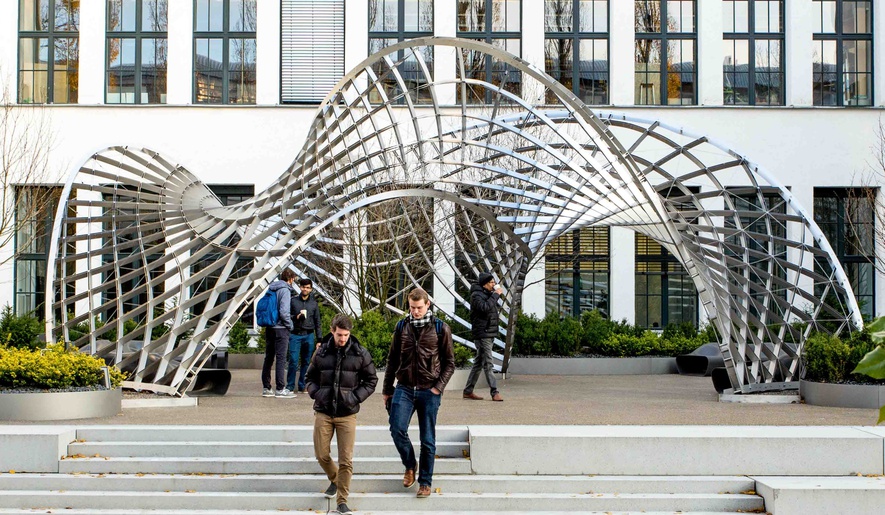}
    \caption{The gridshell is a minimal surface discretised along asymptotic lines.}
    \label{fig:insideout}
\end{subfigure}
\hfill\begin{subfigure}[t]{0.48\textwidth}
    \centering
    \includegraphics[height = 5cm]{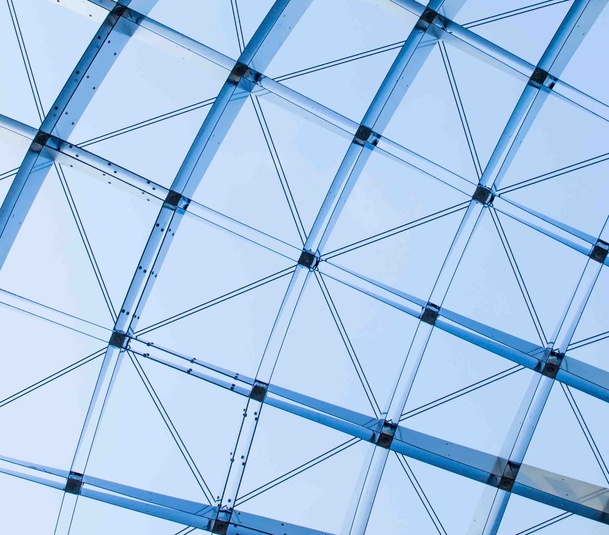}
    \caption{Beams intersect at right angles, and faces are braced by cables.}
    \label{fig:insideout_node}
\end{subfigure}
\caption{Inside\textbackslash Out Gridshell by Eike Schling \cite{schling_design_2018}, (pictures: Felix Noe).}
\label{fig:schling_insideout}
\end{figure}

The interest in minimal surfaces bounded by threads is therefore twofold in architecture: it corresponds to the natural geometry taken by cables, and in a beam system, the boundary—as an asymptotic line—can be constructed from a straight strip, well integrated into the network.\newline

In this paper, we will prove the existence of two families of surfaces bounded by asymptotic lines: tensile catenoids (minimal annuli bounded by a finite number of asymptotic lines of constant curvature), and tensile $k$-noids (minimal surfaces with the topology of the $k$-noids, each boundary component of which consists of four asymptotic lines of constant curvature joined by cusps). We will prove the following theorem:

\begin{theorem}\label{th:cat}
  For any $m\geq 3$, there exists a one-parameter family (after identifying by isometries of $\mathbb{R}^3$) of embedded minimal annuli, called {\rm tensile catenoids}. Any boundary curve of one such surface is the union of $m$ arcs, any one of them an asymptotic line of the surface with constant curvature. Two consecutive boundary arcs meet, forming a cusp (\textit{i.e.} they meet tangentlially, forming a corner on the surface of intrinsic angle zero). Moreover, any annulus in this family is symmetric with respect to a horizontal plane that divides the surface into two vertical graphs; and it also has $m$ vertical planes of symmetry that meet at an angle $\frac{\pi}{m}$.

  The parameter of the family is the length $\rho$ of the neck of the tensile catenoids (\textit{i.e.} the length of the intersection of each annulus with its horizontal plane of symmetry), going from zero (the limit when $\rho$ goes to zero is a piece of the horizontal plane of symmetry) to $2m\cos\frac{\pi}{m}$ (in this case, the annulus splits into $m$ disks, two consecutive ones joined by a point in their boundary).
\end{theorem}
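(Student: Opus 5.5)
We will construct the tensile catenoids by the Plateau-conjugate method announced in the abstract. The key fact is the standard correspondence between boundary behaviour of a minimal surface and of its conjugate. A straight segment in the boundary of a minimal disk corresponds to a planar geodesic of symmetry. A free boundary arc meeting the unit sphere orthogonally corresponds, on the conjugate surface, to an asymptotic line of constant curvature $1$. Indeed, if $\Sigma$ meets $\mathbb{S}^2$ orthogonally along a curve $c$, then $c$ is a line of curvature of $\Sigma$ with normal curvature $\pm1$ and geodesic curvature determined by the sphere. Conjugation rotates the tangent frame by $\pi/2$, exchanging normal and geodesic curvature along corresponding curves, so the conjugate curve is asymptotic with constant curvature.

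\textbf{Step 1: the partially free boundary problem.} Fix $m\geq 3$ and a parameter $a$. Consider the Jordan polygon made of three segments: a vertical segment $L_1$ on the $x_3$-axis from the origin to the point $(0,0,a)$ of the unit ball (or a variant where it exits the ball), and two horizontal segments $L_2, L_3$ issuing from the origin, forming an angle $\pi/m$ in the horizontal plane and ending on $\mathbb{S}^2$. We solve a partially free boundary problem: minimise area among disks whose boundary contains $L_1\cup L_2\cup L_3$, with the remaining arc free on $\mathbb{S}^2$, in the spirit of Hildebrandt--Nitsche. The free arc then meets $\mathbb{S}^2$ orthogonally. The polygonal part is contained in the boundary of a convex region (the wedge of angle $\pi/m$ intersected with the ball and a half-space), so we expect a solution $\Sigma_a$ that is embedded and a graph over a suitable direction. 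We would prove this by a Rad\'o-type argument or by the maximum principle applied to the Gauss map, which on a wedge domain stays in a fixed half-sphere.

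\textbf{Step 2: conjugation and reflection.} Let $\Sigma_a^*$ be the conjugate surface. Under conjugation:
\begin{itemize}
\item the vertical segment becomes a horizontal planar geodesic of symmetry;
\item the two horizontal segments become curves in vertical planes, which meet at angle $\pi/m$ because the conjugate preserves the angle at the vertex;
\item the free arc becomes an asymptotic line of curvature $1$.
\end{itemize}
Schwarz reflection across the horizontal plane and across the $2m$ vertical planes generated by the two vertical symmetry planes produces a complete annulus. Its boundary on each side of the horizontal plane consists of $m$ asymptotic arcs of constant curvature. At the points where the free arc meets $L_2$ or $L_3$ the interior angle is $\pi/2$, and reflection doubles it to $\pi$ on the original side. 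Conjugation of the orthogonal corner then gives a cusp of intrinsic angle zero. We must check this angle bookkeeping carefully, and possibly adjust which corners carry right angles.

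\textbf{Step 3: embeddedness.} As the abstract indicates, we show that the fundamental piece $\Sigma_a^*$ is a graph contained in the wedge bounded by its symmetry planes. One approach is Krust's theorem, which applies when $\Sigma_a$ is a graph over a convex domain. Alternatively we use the Gauss map: it is shared by $\Sigma_a$ and $\Sigma_a^*$, and the boundary behaviour of $\Sigma_a^*$ on the symmetry lines, where the curves are monotone because their curvature does not change sign, places it in the wedge. Graphicality over the horizontal plane then gives the two vertical graphs claimed in the statement.

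\textbf{Step 4: the parameter.} The neck length $\rho$ is $2m$ times the length of the conjugate of $L_1$, which equals the length of the corresponding symmetry curve. This is computed as $\int |dX^*|$ along the horizontal plane and is a continuous function of $a$. We show that:
\begin{itemize}
\item $\rho\to0$ as $a\to0$, where the disk collapses to the planar sector and the conjugate to a planar piece;
\item $\rho\to 2m\cos(\pi/m)$ in the limit configuration, where the disk degenerates (for instance the vertical segment reaches $\mathbb{S}^2$) and the conjugate pieces pinch at a boundary point.
\end{itemize}
Continuity requires uniqueness or at least continuous dependence of the minimisers, which we would get from the graph property and a maximum principle.

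\textbf{Main obstacle.} We expect the hardest part to be the uniform geometric control of the solutions of the partially free boundary problem: embeddedness and the graph property up to the free boundary, together with continuity and monotonicity in the parameter. This control is what identifies the exact range $(0,2m\cos\frac{\pi}{m})$ and excludes branch points at the corners. We would first try to prove the Gauss map stays in a quarter-sphere using boundary values and harmonicity, and derive everything else from that.
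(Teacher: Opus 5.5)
Your overall strategy (a free-boundary disk on $\mathbb{S}^2$, conjugation, reflection) matches the paper's, but Step~1 does not pose a usable problem, and your mechanism for the cusps is false.

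As written, $L_1,L_2,L_3$ all issue from the origin, so their union is a tripod. It is not a Jordan arc with two endpoints on $\mathbb{S}^2$, and Theorem~\ref{th:semifree} does not apply.

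More seriously, conjugation is an intrinsic isometry and preserves the angle at every corner. By your own bookkeeping, the right angle where a radial segment meets the free arc stays $\frac{\pi}{2}$ in $\Sigma_a^*$. After reflection it becomes $\pi$, i.e.\ a smooth junction, not a cusp. In general, at an endpoint $q$ of a segment $s$, $T_qM$ contains $q$ and $s'$, and the free arc leaves along the direction of $T_qM$ orthogonal to $q$. So the corner angle is the angle between $s$ and $T_q\mathbb{S}^2$, or its supplement. It can vanish only if $s$ is \emph{tangent} to $\mathbb{S}^2$ at $q$.

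This tangency is the missing idea. The paper takes $\Gamma=\ell_1\cup\ell_2\cup\ell_3$ \emph{outside} $\mathbb{B}$:
$\ell_1$ is radial and horizontal from $(0,1,0)$ to $(0,1+a,0)$;
$\ell_2$ is vertical of length $h$;
$\ell_3$ is horizontal, ending tangentially to $\mathbb{S}^2$ at $p_4$;
and the vertical planes of $\ell_1$ and $\ell_3$ meet at angle $\frac{\pi}{m}$.
Then $\ell_4$ leaves $p_4$ tangent to $\ell_3$ (Remark~\ref{r:cusp}), so $M$ already has a zero-angle corner there. That corner becomes the cusp after conjugation and reflection. The right angle at $p_1$ becomes a smooth junction, which is why each boundary component has exactly $m$ arcs. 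Working outside the ball also gives the correct sign for a thread: the curvature vector of $\ell_4^*$ is the outer conormal, whereas a free arc inside the ball gives the opposite convexity.

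The same geometry produces the parameter range, which your Step~4 only asserts. Conjugation preserves length, so the neck consists of $2m$ copies of $\ell_2^*$ and $\rho=2mh$ exactly. No integral or continuity-in-the-parameter argument is needed. Tangency of $\ell_3$ together with the angle $\frac{\pi}{m}$ forces $1+a=\sqrt{1-h^2}\csc\frac{\pi}{m}$. Hence $a>0$ exactly when $h<\cos\frac{\pi}{m}$, and as $h\to\cos\frac{\pi}{m}$ the arc $\ell_1^*$ shrinks to a point and consecutive pieces touch. In your setup, the analogous degeneration (the vertical segment reaching $\mathbb{S}^2$) would give the bound $2m$, not $2m\cos\frac{\pi}{m}$.

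Finally, Step~3 is not yet an argument. In the correct configuration the projection of $M$ to $\{z=0\}$ is not convex, so Krust's theorem is unavailable. The paper instead proves the following.
\begin{enumerate}
\item $M$ is a graph over the three planes orthogonal to the segments (Proposition~\ref{p:graph}, resting on area minimality through Lemma~\ref{l:P}).
\item The Gauss map is injective along each segment.
\item The Gauss map is never horizontal on $\ell_4$ (Lemma~\ref{l:Nhoriz}, via the radial-graph property of the projection of $\ell_4$), so the projection of $\ell_4^*$ is locally concave.
\item A farthest-point argument in vertical half-planes through the axis then shows that $\ell_4^*$ projects injectively and that $M^*$ is a graph in the wedge.
\item $M^*\subset\{z>0\}$ follows from the rotational-graph property of $M$ (Lemma~\ref{l:rotation}).
\end{enumerate}
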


Catenoids are relevant in architecture (for example, for cooling towers) and have already been studied at the IL (see \cref{fig:catenoidsoapIL,fig:catenoidNet}). We show in \cref{fig:catenoidBeams,fig:knoid_beams} how they could be constructed using beams, in the manner of Eike Schling.  The surface is discretised with beams along asymptotic lines: the rectangular beams are positioned on the surface so that their longer side is oriented along the normal direction, and are thus twisted. Nodes are orthogonal and flat. Quadrilaterals are regular, but their size varies, especially close to the boundaries where lines are concentrated.

\begin{figure}[htb!]
\centering
\begin{subfigure}[t][][b]{0.3\textwidth}
    \includegraphics[height = 3.5cm]{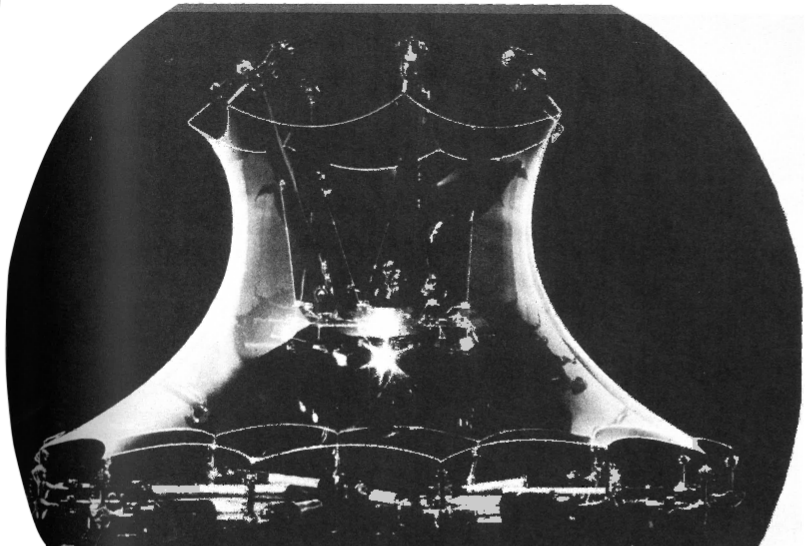}
    \caption{Soap model of a tensile catenoid. Source \cite{IL18}.}
    \label{fig:catenoidsoapIL}
\end{subfigure}
\hfill
\begin{subfigure}[t][][b]{0.25\textwidth}
    \includegraphics[height = 3.5cm]{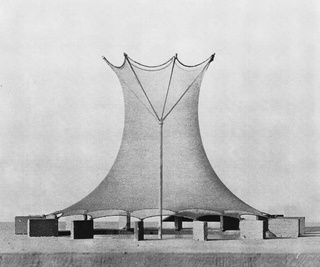}
    \caption{Model of tensile catenoid with a net. Source \cite{IL18}.}
    \label{fig:catenoidNet}
\end{subfigure}
\hfill
\begin{subfigure}[t][][b]{0.35\textwidth}
    \includegraphics[width = 3.5cm, rotate = -90, trim = {2.5cm 3cm 2cm 3cm}, clip]{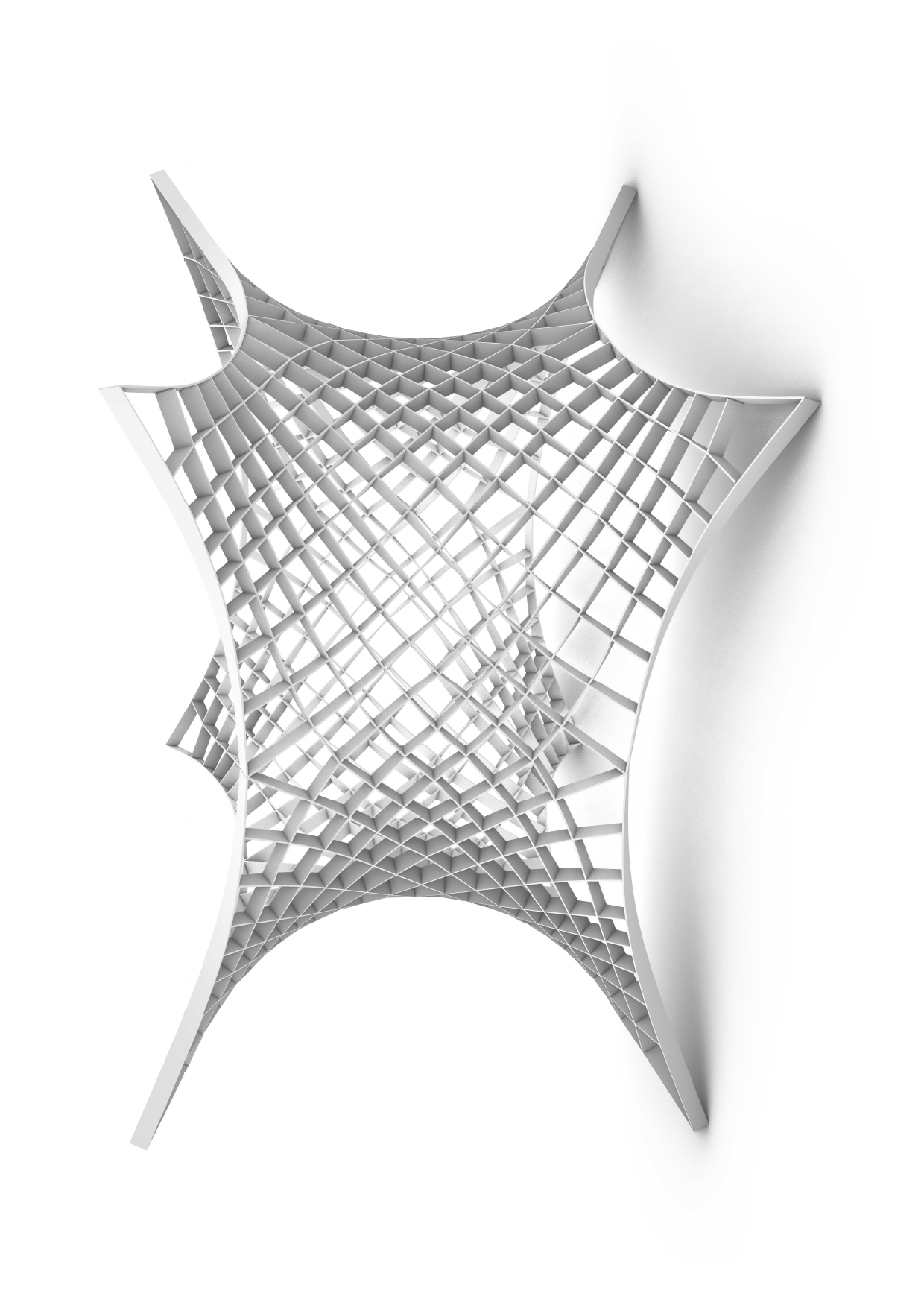}
    \caption{Model of a tensile catenoid made of beams following asymptotic lines (made using Rhino-Grasshopper).}
    \label{fig:catenoidBeams}
\end{subfigure}
\caption{Models of tensile catenoids with thread boundaries.}
\label{fig:catenoid}
\end{figure}

Other topologies can be obtained with the method described in this paper, such as tensile $k$-noids. In particular $3$-noids are constructed (\cref{fig:knoid_frise}). \cref{fig:catenoidBeams} shows a structure obtained from half a $3$-noid which is discretized with asymptotic curves. We will prove the following theorem:

\begin{theorem}\label{th:k}
  For any $k\geq 3$, there exists an embedded minimal surface with the topology of a sphere minus $k$ disks, called a {\rm tensile $k$-noid}. Any boundary curve of this surface is the union of four arcs, any one of them an asymptotic line of the surface with constant curvature, two consecutive ones meet forming a cusp (\textit{i.e.} they form a corner on the surface of angle zero). Moreover, the tensile $k$-noid is symmetric with respect to a horizontal plane dividing the surface into two vertical graphs; and it also has $k$ vertical planes of symmetry meeting at an angle $\frac{\pi}{k}$.
\end{theorem}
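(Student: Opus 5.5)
We construct the tensile $k$-noid by the Plateau-conjugate method, as for the tensile catenoids in Theorem~\ref{th:cat}. The key fact is a correspondence between boundary conditions under conjugation. For a minimal surface $X$ with conjugate $X^*$:
\begin{itemize}
\item a straight segment in the boundary of $X$ becomes a planar curve of symmetry of $X^*$, met orthogonally by the surface;
\item a planar geodesic of symmetry of $X$ becomes a straight line on $X^*$;
\item a free boundary arc meeting the unit sphere $\mathbb{S}^2$ orthogonally becomes an asymptotic line of constant curvature $1$. Indeed, there the conormal is radial, so $\partial_s X^* = -\nu = -X$, which forces $|\partial_s^2 X^*| = |\partial_s X| = 1$ and makes the principal normal tangent to the conjugate surface.
\end{itemize}

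\textbf{Step 1: choose the fundamental piece.} The tensile $k$-noid has a horizontal symmetry plane and $k$ vertical symmetry planes at angle $\pi/k$, so its fundamental piece is one quarter of a "half-end" lying in a wedge of angle $\pi/k$ above the horizontal plane. Its boundary consists of:
\begin{itemize}
\item one arc in the horizontal plane;
\item two arcs in the bounding vertical planes, one in each;
\item one asymptotic arc of constant curvature.
\end{itemize}
The four boundary arcs of each end come from the two reflections, through the horizontal plane and through the vertical plane bisecting the end. The cusps appear where the asymptotic arc meets a symmetry plane tangentially.

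On the conjugate side, we therefore solve a partially free boundary problem for a minimal disk $\Sigma$. Its fixed part is a polygon $\Gamma$ made of three straight segments, and its free arc lies on $\mathbb{S}^2$ and meets it orthogonally. The directions of the segments are dictated by the planes of symmetry: one segment is vertical and two are horizontal, making angles with each other chosen so that the conjugate planes meet at angles $\pi/2$ and $\pi/k$. The free arc joins two endpoints of $\Gamma$ lying on the sphere. We solve this by the classical existence theory of Hildebrandt–Nitsche type, minimizing area in the class of disks spanning $\Gamma$ with the remaining arc free on $\mathbb{S}^2$. We would reuse the same solution framework as for Theorem~\ref{th:cat}, now with a different angle configuration between the segments. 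We then prove that the minimizer is unique, smooth up to the free arc, embedded, and a graph over a suitable plane, using the convex hull property and a Radó-type argument, since $\Gamma$ together with the sphere is a graph-like contour.

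\textbf{Step 2: conjugate and reflect.} Conjugating $\Sigma$ gives a piece $\Sigma^*$ bounded by three planar symmetry curves and one asymptotic arc of curvature $1$. We have to check two period conditions:
\begin{enumerate}
\item The two symmetry curves meant to lie in vertical planes must lie in planes through a common vertical axis at angle $\pi/k$. This follows from the angle between the corresponding segments of $\Gamma$. The remaining horizontal translation is absorbed by a choice of origin, which is why we take a single parameter-free configuration, or else use an intermediate value argument on the position of the vertex of $\Gamma$.
\item The curve coming from the vertical segment must lie in the horizontal plane $\{x_3=0\}$.
\end{enumerate}
Reflecting $\Sigma^*$ across these planes $4k$ times produces a surface with the topology of a sphere minus $k$ disks, each boundary component consisting of four asymptotic arcs joined by cusps. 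The cusps come from the endpoints where the free arc meets the segments at angle $\pi/2$ on the sphere: conjugation makes the tangent of the asymptotic arc lie in the symmetry plane there, and after reflection the intrinsic angle is zero.

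\textbf{Step 3: embeddedness, the main obstacle.} We expect embeddedness to be the hardest step. The plan is to show that $\Sigma^*$ is a vertical graph over a domain contained in the wedge bounded by the vertical symmetry planes, with $x_3\ge 0$. For the graph property, we use Krust's theorem in its version for partially free boundaries: $\Sigma$ is a graph over a convex domain, so its conjugate is a graph. Where the free arc prevents a direct application, we instead show that the Gauss map of $\Sigma$ is injective into a hemisphere, arguing via the boundary behaviour of the normal along the segments and the sphere. Containment in the wedge follows from a maximum principle argument on the harmonic coordinate functions of $\Sigma^*$, checked on the boundary using the monotonicity of the normal along each symmetry curve. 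Since the piece is then a graph inside a fundamental domain of the symmetry group, the Schwarz reflections give an embedded surface.
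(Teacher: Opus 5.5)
Your overall architecture matches the paper's: a partially free boundary problem on $\mathbb{S}^2$ for a polygon of two horizontal segments and one vertical one, followed by conjugation and reflection in a wedge of angle $\pi/k$ times a half-space. Your check that the conjugate of the free arc is asymptotic with curvature $1$ is also fine. There are, however, two genuine problems.

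\textbf{The cusp mechanism is wrong.} Because you never fix $\Gamma$, this leaves the construction undetermined. Conjugation is an isometry, so the corner angle of $\Sigma$ between the free arc and a segment equals the angle of $\Sigma^*$ between $\ell_4^*$ and the corresponding symmetry curve, and reflecting doubles it. A right angle therefore becomes a straight angle. At such a corner the tangent of $\ell_4^*$ is the conormal of the symmetry curve, so it is orthogonal to the mirror plane rather than lying in it, and $\ell_4^*$ continues smoothly into its mirror image. This is exactly what happens at $p_1^*$ for the tensile catenoid, where $\ell_1$ is radial, and it is why those boundary curves have $m$ arcs rather than $2m$. The fundamental piece is a quarter of an end (one of $4k$ pieces), not a quarter of a half-end. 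Hence each copy of $\ell_4^*$ is a whole boundary arc, and you need corner angle zero at \emph{both} ends of the free arc. You get this by choosing the segments adjacent to the free arc tangent to $\mathbb{S}^2$ at their endpoints. Since $T_pM$ contains both the position vector and the segment, $\ell_4$ then leaves tangentially to the segment (Remark~\ref{r:cusp_k}). In the paper, $\ell_1$ is tangent at the south pole and $\ell_3$ is vertical and tangent at the equator.

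\textbf{Step 3 does not work, and it is the heart of the proof.} Krust's theorem needs $\Sigma$ to be a graph over a \emph{convex} domain, which is impossible here. The zero-angle corners required for the cusps project to zero-angle corners of the projected domain; for instance, $\pi(\ell_4)$ leaves $\pi(p_1)$ tangent to $\pi(\ell_1)$. The same happens in the other two graph directions at one of the cusps, and a convex domain has no such corner. The paper notes explicitly, for the catenoid, that Krust cannot be used.

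Your fallback does not close the gap. Injectivity of the Gauss map into a hemisphere only makes $\Sigma^*$ a vertical multigraph; it says nothing about injectivity of the projection. The maximum principle reduces wedge containment and positivity of $x_3$ to boundary data. But the real difficulty is the curve $\ell_4^*$, whose position is fixed by no symmetry, and your plan gives no way to locate it.

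The missing ingredients are the following.
\begin{enumerate}
\item[(i)] $N$ is never horizontal along $\ell_4$ (Lemma~\ref{l:Nhoriz_k}). This uses that the tangent plane there passes through the origin and that $\pi(\ell_4)$ is a radial graph. Since the curvature vector of $\ell_4^*$ is its outer conormal, this makes $\pi(\ell_4^*)$ locally strictly concave with respect to $\Omega^*$, with the sign fixed at the cusps.
\item[(ii)] Control of the symmetry curves: monotone height along $\ell_1^*$ and $\ell_2^*$. For $\ell_3^*$, concavity together with a length comparison keeps it inside the wedge.
\item[(iii)] A farthest-point-from-the-axis argument on each vertical half-plane through the $z$-axis, showing it meets $\ell_4^*$ exactly once. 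Then $\partial\Sigma^*$ projects onto a Jordan curve and the multigraph is a graph.
\end{enumerate}
Finally, a separate argument is needed to show that $\Sigma^*\subset\{z>0\}$.
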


\begin{figure}
\centering
\begin{subfigure}[t]{0.45\textwidth}
    \includegraphics[height = 3.5cm]{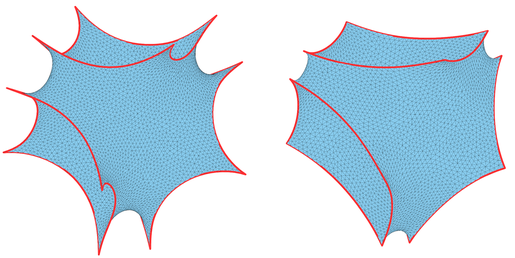}
    \caption{Computation of tensile $3$-noids with thread boundaries for two thread lengths.}
    \label{fig:knoid_frise}
\end{subfigure}
\hfill
\begin{subfigure}[t]{0.5\textwidth}
    \includegraphics[height = 4.5cm]{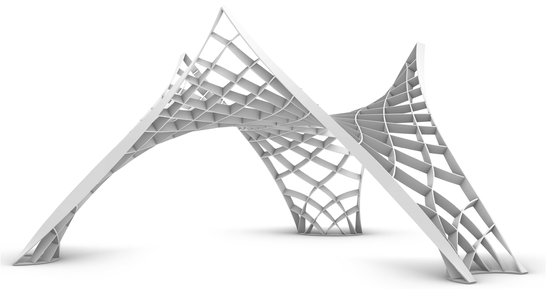}
    \caption{Model of half a tensile $3$-noid made of beams following asymptotic lines (made using Rhino-Grasshopper).}
    \label{fig:knoid_beams}
\end{subfigure}
\caption{Meshes and architectural interpretation of a 3-noid}
\label{fig:knoid_app}
\end{figure}

This article proposes proofs of the existence and embedding of two families of minimal surfaces bounded by asymptotic lines. It uses the
conjugate surface of the solution to a partially-free boundary problem.

\section{Preliminaries}

\subsection{The thread problem}
Minimal surfaces bounded by asymptotic lines with constant negative geodesic curvature are solutions to the thread variational problem. An introduction can be found in \cite{dierkes_minimal_1992-2}, chapter 10 and \cite{dierkes_regularity_2010}, chapter 5. For a collection of curves $\Gamma_1,\dots,\Gamma_k$ with endpoints $P^\ell_1$ and $P^\ell_2$ with $\ell \in \{1,\dots, k\}$, we consider movable curves $L_1,\dots,L_k$ with same endpoints but with fixed length $|L_\ell| > |P^\ell_2-P^\ell_1|$, in such a way that $<\Gamma_1,\dots,\Gamma_k, L_1, \dots,L_k>$ define a collection of closed curves $C_1,\dots,C_n$. The thread variational problem finds the minimising area surface bounded by $C_1,\dots,C_n$ with movable curves $L_\ell$ with fixed length  $|L_\ell|$. Alt \cite{alt_existenz_1973} and Ecker \cite{ecker_area-minimizing_1989} have proved the existence of the solution in higher dimensions.

\begin{theorem}[\cite{dierkes_regularity_2010}]
Every closed rectifiable curve $<\Gamma_1,\dots, \Gamma_k,L_1,\dots,L_k> \subset \mathbb{R}^3$ with movable boundary $L_1,\dots,L_k$ and fixed length $|L_\ell|> |P^\ell_2-P^\ell_1|$ spans a minimal surface $M$.
\end{theorem}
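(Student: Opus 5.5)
The plan is the direct method of the calculus of variations, in the spirit of the classical Plateau problem and following Alt and Ecker. I would work with disk-type (or, for several closed curves, fixed-topology) surfaces $X\colon \overline{B}\to\mathbb{R}^3$ in the Sobolev class $H^{1,2}(B,\mathbb{R}^3)$. The admissible class $\mathcal{C}$ consists of those maps whose boundary trace $X|_{\partial B}$ is a weakly monotone parametrization of a closed curve made of the fixed arcs $\Gamma_\ell$ and movable arcs $L_\ell$. The arcs $\Gamma_\ell$ are traversed monotonically. Each $L_\ell$ is a rectifiable curve joining $P^\ell_1$ to $P^\ell_2$ with length at most $|L_\ell|$. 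The hypothesis $|L_\ell|>|P^\ell_2-P^\ell_1|$ guarantees that $\mathcal{C}$ is nonempty: one may take any curve of the prescribed length, e.g.\ a circular arc through the endpoints. The functional to minimize is the Dirichlet energy $D(X)$. Since $D\ge 2\,\mathrm{Area}$, with equality exactly for conformal maps, a minimizer of $D$ in $\mathcal{C}$ that is conformal will be an area minimizer among parametrized surfaces, and it will be harmonic.

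The key steps, in order, are as follows.

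\textbf{Step 1 (normalization).} I would impose a three-point condition, fixing three boundary points of $\partial B$ to three chosen points on the $\Gamma$-part, or more generally on the whole curve. This kills the noncompact conformal group of the disk.

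\textbf{Step 2 (compactness).} I would take a minimizing sequence $X_n$ with $D(X_n)\to \inf_{\mathcal{C}}D$. By the Courant--Lebesgue lemma, the normalization gives equicontinuity of the boundary traces. The movable arcs are curves of uniformly bounded length with fixed endpoints, so by Arzelà--Ascoli (after reparametrization by arclength) they subconverge uniformly to curves $L_\ell^\infty$. By lower semicontinuity of length, $|L^\infty_\ell|\le|L_\ell|$. I would then replace each $X_n$ by the harmonic extension of its boundary values, which does not increase $D$, and pass to a limit $X$ that is weakly convergent in $H^{1,2}$ and uniformly convergent on $\partial B$. Lower semicontinuity of $D$ shows that $X$ is a minimizer.

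\textbf{Step 3 (conformality).} I would use inner variations $X\circ\tau_\varepsilon$, where $\tau_\varepsilon$ are diffeomorphisms of $\overline B$ preserving $\partial B$. These stay in $\mathcal{C}$, since reparametrization does not change the traced curve. The resulting first variation shows that the Hopf differential $\langle X_w,X_w\rangle\,dw^2$ is holomorphic and real on $\partial B$, so it vanishes. Hence $X$ is conformal and harmonic, i.e.\ a (possibly branched) minimal surface $M$ spanning the configuration.

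The main obstacle I expect is the possible degeneration of the movable arcs in Step 2. Two things can go wrong: the length constraint could become slack in the limit ($|L^\infty_\ell|<|L_\ell|$), or a thread could collapse onto or touch itself or the fixed arcs. The existence statement as stated only needs a minimal surface spanning an admissible configuration. Working with the inequality constraint "length $\le |L_\ell|$" makes the admissible class closed under uniform convergence, so the limit remains admissible, and Step 2 goes through. To recover the equality $|L^\infty_\ell|=|L_\ell|$, I would try the following argument. If the constraint were slack, one could locally push the thread outward, lengthening it while strictly decreasing area; this is the isoperimetric-type argument used by Dierkes--Hildebrandt. I would first try this via an outer variation supported near an interior point of $L_\ell$. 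That also requires a little boundary regularity, and I would rely on the cited regularity results of \cite{dierkes_regularity_2010} rather than redo it here.
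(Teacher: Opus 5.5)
The paper does not prove this statement; it quotes it from \cite{dierkes_regularity_2010}. Your outline (direct method for the Dirichlet integral, conformality by inner variations) is the natural strategy, but Step~2 has a genuine gap.

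The Courant--Lebesgue lemma only gives, for some $\rho\in(\delta,\sqrt{\delta})$, a short image of the arc $B\cap\partial B_\rho(w_0)$, hence two nearby image points $X_n(z_1),X_n(z_2)$. Turning this into equicontinuity requires that all traces run monotonically over \emph{one fixed} Jordan curve. Then two nearby points split the curve into a short arc and a long one, and the three-point condition decides which of them is the image of the small boundary arc. Here the closed curves $\Gamma\cup L^n$ vary with $n$, and a thread may come arbitrarily close to itself or to the fixed arcs. Two nearby points can then cut the curve into two long arcs, so the argument yields no modulus of continuity on the movable arcs, nor at the junctions $P^\ell_i$.

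Arzel\`a--Ascoli does not repair this. It acts on arclength reparametrizations of the image curves, not on the traces $X_n|_{\partial B}$, and you cannot reparametrize those without changing $D(X_n)$. So you do not know that the trace of the weak limit $X$ parametrizes $L^\infty_\ell$, still covers the fixed arcs, or satisfies the three-point condition. In other words, you do not know that $X\in\mathcal C$.

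This is not a technicality. Take one fixed arc $\Gamma=\gamma([0,1])$, with $\gamma$ Lipschitz, and one thread whose prescribed length exceeds the length of $\Gamma$. The thread may run back along $\Gamma$. Composing $(u,v)\mapsto\gamma(u)$ on $[0,1]\times[0,\varepsilon]$ with normalized conformal maps from the disk gives admissible maps with $D\to 0$. Since $D$ vanishes only on constants, $\inf_{\mathcal C}D=0$ is not attained, although your Steps~1--2 never used an upper bound on $|L_\ell|$. The mechanism --- a thread, or part of it, collapsing onto the wire or onto itself while the parametrization concentrates --- is exactly what the compactness step must handle.

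A workable route is as follows:
\begin{enumerate}
\item Treat the thread traces as maps of bounded variation with total variation at most $|L_\ell|$, and use Helly's selection theorem together with lower semicontinuity of length.
\item Obtain continuity of the limit trace from its $H^{1/2}$ regularity, since a jump is incompatible with $H^{1/2}$.
\item Either add a hypothesis excluding collapse, or allow, in the notion of solution, a thread to lie partly on the fixed boundary.
\end{enumerate}

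Once a minimizer exists, equality of lengths needs no boundary regularity. Suppose the thread were shorter than $|L_\ell|$ by $\eta>0$. Pick $w_0$ inside a thread arc, and choose $\rho$ by Courant--Lebesgue so that $B\cap\partial B_\rho(w_0)$ has image of length $<\eta$. Then $X|_{B\setminus\overline{B_\rho(w_0)}}$, reparametrized conformally over the disk, is admissible. It has strictly smaller energy, because the nonconstant harmonic map $X$ has positive energy on the removed region.

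Finally, for several closed curves, minimizing in a fixed topological type requires a Douglas-type condition. Spanning each curve by a disk is enough for this statement.
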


However, there is no control over the topology of the solution (even in cases where the physical experience has some evidence), which can be multi-connected or even not connected. There is no mathematical proof of the embeddedness of the movable boundary. Yet, Alt \cite{alt_existenz_1973} proved that the movable boundary is regular, has constant curvature and eventually meets the fixed boundary $\Gamma_1,\dots,\Gamma_k$ at some point in cuspidal singularity.

\medskip

For constructing solutions to the variational problem, we will use the Plateau-conjugate method (Karcher popularized this method in the 1980s, see ~\cite{karcher_construction_1989}).
In \cite{dierkes_regularity_2010} (p. 467), it is proved that a solution to this problem has a conjugate minimal surface satisfying a partially-free boundary condition with free boundary condition along movable boundary curves contained in spheres.

\subsection{Conjugate minimal surfaces}
It is well-known that a characterization for a surface to be minimal
is that its coordinate functions are harmonic. Two minimal surfaces
are called conjugate if their coordinate functions are conjugate as
harmonic maps. Hence, given a minimal surface, its conjugate surface
is locally well defined (globally if the surface is simply connected)
and it is unique up to a translation. Detailed information about
conjugate surfaces and the conjugate construction method for minimal
surfaces can be found in~\cite{karcher_construction_1989}. We include
some results that will be used later in order to fix the notation.

Given a (simply connected) minimal surface $M$, possibly with
boundary, we will denote by $M^*$ its conjugate surface. The surfaces
$M$ and $M^*$ are isometric, share the same Gauss map (\textit{i.e.} $N^*=N$)
and their shape operators differ by a rotation by angle $\pi/2$
(\textit{i.e.} $S^*=J\circ S$, with $J$ the $\pi/2$ rotation in the tangent
plane). In particular, given a curve $c$ in $M$, the conormal (resp. tangent) vector at a point $p$ on $c$ corresponds under conjugation (up to a sign) to the tangent (resp. conormal) vector at its conjugate point $p^*$ on the conjugate curve $c^*$ in $M^**$.

Given a curve i$c$ n $M$ (possibly in its boundary), if we denote by
$\kappa_g$ its geodesic curvature, $\kappa_n$ its normal curvature and
$\tau_g$ its geodesic torsion, then the corresponding data for its
conjugate curve $c^*$ in $M^*$ is given, respectively, by
\begin{equation}\label{eq:conj}
  \kappa_g^*=\kappa_g , \quad \kappa_n^* = - \tau_g, \quad \tau_g^* =
  \kappa_n .
\end{equation}

In our construction we will consider the conjugate surface $M^*$
of a minimal disk $M$ (bounded and simply connected) lying outside the
unit ball $\mathbb{B}$ such that $M$ is free-boundary to
$\mathbb{S}^2=\partial \mathbb{B}$ along 
$\ell=\partial M\cap \mathbb{S}^2$ (\textit{i.e.} the exterior conormal vector of $M$ at
any point of $\ell$ will point to the origin).

The boundary of $M$ will consist of the union of a finite number of
smooth curves. More precisely, $\partial M-\ell$ will be the union of
a finite number of straight segments $\ell_i$. The corresponding
conjugate curve $\ell_i^*\subset \partial M^*$ of each $\ell_i$ will
be a geodesic contained in a plane orthogonal to $\ell_i$, and $M^*$
could be extended by symmetry with respect to that plane.

The next subsection includes a result related to partially-free boundary
problems that justifies the existence of these kinds of minimal surfaces
just described.

\subsection{Partially-free boundary problem}

Let $S\subset\mathbb{R}^3$ be a complete surface (in our case, it will be $S=\mathbb{S}^2$) and $\Gamma\subset\mathbb{R}^3$ be a piecewise smooth regular Jordan curve with endpoints $p,q\in S$ such that $\Gamma-\{p,q\}$ is disjoint from $S$ and $p,q$ can be connected with an arc on $S$. In~\cite{dierkes_minimal_1992} (chapters~4 and~5), they consider the partially-free (or semi-free) boundary problem of looking for a minimal disk with boundary $\Gamma\cup\ell$, where $\ell$ is contained in $S$. In fact, they consider a more general setting, but we only focus on this case as it is simpler and the required one below. They also study hypotheses under which the obtained minimal disk meets the surface $S$ orthogonally (\textit{i.e.} when the solution to the partially-free boundary problem is stationary within the configuration $\langle\Gamma, S\rangle$).

\begin{theorem}[\cite{dierkes_minimal_1992}]\label{th:semifree}
  Let $S\subset\mathbb{R}^3$ be a complete surface of class $C^1$ and $\Gamma\subset\mathbb{R}^3$ be a piecewise smooth regular Jordan curve with endpoints $p,q\in S$ such that $\Gamma-\{p,q\}$ is disjoint from $S$ and $p,q$ can be connected with an arc on $S$. Then there exists a minimal disk contained in $\R^3-S$ with boundary $\Gamma\cup\ell$, where $\ell$ is contained in $S$, that meets the surface $S$ orthogonally along $\ell$; that is, there exists a stationary solution to the partially-free boundary problem within the configuration $\langle\Gamma, S\rangle$. Moreover, $M$ is a minimum for the energy (and so the area) among all the surfaces with these properties (see \cite{dierkes_minimal_1992} p.331).
\end{theorem}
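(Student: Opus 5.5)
The plan is the classical direct method for the partially-free problem, in the framework of \cite{dierkes_minimal_1992}. I would fix the unit disk $B\subset\mathbb{C}$ and split its boundary into a closed arc $C_1$ and its complementary arc $C_2$. The admissible class $\mathcal{C}(\Gamma,S)$ consists of maps $X\in H^{1,2}(B,\mathbb{R}^3)$ with three properties: the trace of $X$ on $C_1$ is a weakly monotone parametrization of $\Gamma$ from $p$ to $q$; $X(C_2)\subset S$; and $X(B)$ avoids the open side of $S$ not containing $\Gamma$ (equivalently, $X(B)\subset\overline{\mathbb{R}^3-\mathbb{B}}$ when $S=\mathbb{S}^2$). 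This class is nonempty. Indeed, $p$ and $q$ are joined by an arc $\ell_0\subset S$, so $\Gamma\cup\ell_0$ is a rectifiable Jordan curve. Coning off a slight push of it away from $S$ and then reparametrizing gives a finite-energy competitor. I then minimize the Dirichlet energy $D(X)$ over $\mathcal{C}(\Gamma,S)$.

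The key steps, in order, are as follows.
\begin{enumerate}
\item \emph{Normalize.} Use a three-point condition on $C_1$, fixing three interior points of $\Gamma$, to kill the conformal group.
\item \emph{Get compactness.} Apply the Courant--Lebesgue lemma to a minimizing sequence. Together with the three-point condition, it gives equicontinuity of the traces on $C_1$. This ensures the limit still parametrizes $\Gamma$ monotonically, with $p,q$ as images of the endpoints of $C_1$.
\item \emph{Pass to the limit.} $S$ is closed and complete, so by weak $H^{1,2}$ compactness and compactness of the trace the conditions $X(C_2)\subset S$ and the one-sided condition pass to the limit. Lower semicontinuity of $D$ then yields a minimizer $X$.
\item \emph{Identify the minimizer.} Inner variations, i.e. reparametrizations of $B$ preserving the arcs, give conformality. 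Then $X$ is harmonic and conformal in $B$, hence a (branched) minimal disk, and $D(X)=\mathrm{Area}(X)$. This also gives the area-minimizing statement via the usual $D\ge A$ comparison.
\item \emph{Derive the free-boundary condition.} Outer variations $X+\varepsilon\varphi$ with $\varphi|_{C_2}$ tangent to $S$ (projected back to $S$, which is possible for $C^1$ surfaces) give $\partial_\nu X\perp T S$ weakly on $C_2$. So the surface meets $S$ orthogonally, and the minimizer is stationary within $\langle\Gamma,S\rangle$.
\end{enumerate}

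The step I expect to be the main obstacle is the \emph{regularity at the free boundary} and the \emph{interior avoidance} $X(B)\cap S\subset X(C_2)$. First, one must show that $X$ is continuous up to $C_2$. For $C^1$ supporting surfaces this needs a Hildebrandt--Nitsche type argument: local reflection across $S$ in adapted coordinates to get a Morrey/Dirichlet growth estimate for $D$ on small half-disks near $C_2$. Only after continuity does the weak orthogonality become meaningful, and with more smoothness of $S$ (as here, $S=\mathbb{S}^2$) one can bootstrap to smoothness. Second, one must show the interior does not touch $S$. For $S=\mathbb{S}^2$ this follows from the maximum principle. The function $|X|^2$ is subharmonic, since $\Delta|X|^2=2|\nabla X|^2\ge0$, so it cannot attain an interior minimum $1$ unless it is constant. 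Hence $X(B)\subset\mathbb{R}^3-\overline{\mathbb{B}}$. For general $S$ I would instead rely on the one-sided constraint and the first-variation argument of \cite{dierkes_minimal_1992}, ch.~4--5.

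Corner behaviour at $p,q$, where $\Gamma$ meets $S$, only requires continuity there. This comes from the same Courant--Lebesgue estimate, so I would not expect trouble.
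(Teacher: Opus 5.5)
The paper does not prove this statement. It quotes it from \cite{dierkes_minimal_1992}, where it is obtained by the direct method you outline: minimize the Dirichlet integral among normalized admissible maps, pass to the limit via Courant--Lebesgue, and get conformality and the free-boundary condition from variations. Apart from the slip noted below, your steps 1--5 reproduce that. The part you add, interior avoidance of $S=\mathbb{S}^2$, does not work.

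You argue that $|X|^2$ is subharmonic and so cannot attain an interior minimum $1$. But subharmonicity forbids interior \emph{maxima}, not minima. A minimal surface lying in $\{|x|\ge 1\}$ can touch $\mathbb{S}^2$ at an interior point; a plane tangent to the sphere does exactly that. The maximum principle only rules out touching the sphere from inside the ball. There is a second problem. You built $X(B)\subset\overline{\mathbb{R}^3-\mathbb{B}}$ into the admissible class, so your minimizer is a priori only a solution of an obstacle problem. Harmonicity does not come from inner variations, which give only conformality. It comes from outer variations $X+\varepsilon\varphi$ with $\varphi\in C^\infty_c(B)$, and these are not admissible near points where $|X|=1$. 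Your avoidance argument already uses $\Delta|X|^2=2|\nabla X|^2$, so it is circular as well as incorrect.

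Without the one-sided constraint, which is the usual setting of the partially free problem, steps 1--5 give a harmonic conformal minimizer meeting $\mathbb{S}^2$ orthogonally. But nothing in the direct method keeps it outside the ball, and for general $\Gamma$ one should not expect it to stay there. If $\Gamma$ reaches far from the sphere and winds around it, a nearly flat disk cutting through $\mathbb{B}$ can have less area than any disk outside $\mathbb{B}$. So the clause ``contained in $\mathbb{R}^3-S$'' needs extra input from the particular position of $\Gamma$, such as a comparison argument based on the minimizing property, and your proposal supplies none.

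A smaller point concerns the normalization. Once the endpoints of $C_1$ are required to go to $p$ and $q$, the conformal automorphisms of the disk preserving $C_1$ form a one-parameter group. So you can prescribe only \emph{one} interior point of $\Gamma$, not three. With three prescribed points the class is no longer invariant under conformal reparametrization, and the inner-variation argument for conformality breaks down.
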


Now we consider $S=\mathbb{S}^2$ in the theorem above, and let us denote by $\Sigma$ the obtained minimal disk and $\ell$ is the free part of its boundary.

\begin{claim}\label{cl:curvatureline}
If $S=\mathbb{S}^2$ in Theorem \ref{th:semifree}, the curve $\ell$ is
  a line of curvature of $M$ (hence its geodesic torsion $\tau_g$
  vanishes identically) with geodesic curvature $\kappa_g=1$ at any point
  (up to a possible change of orientation).
\end{claim}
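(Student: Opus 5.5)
The argument is a local computation along $\ell$ using the orthogonality condition of Theorem~\ref{th:semifree}. Parametrize $\ell$ by arclength, $\gamma(s)\in\mathbb{S}^2$, with unit tangent $T=\gamma'$. Orthogonality means the exterior conormal $\eta$ of $M$ along $\ell$ is normal to the sphere, so $\eta=-\gamma$: $M$ lies outside $\mathbb{B}$ and the outward conormal points toward the origin. Then the unit normal $N$ of $M$ along $\ell$, being orthogonal to both $T$ and $\eta=-\gamma$, is tangent to $\mathbb{S}^2$. After fixing an orientation we may take $N=\gamma\times T$ (up to sign).

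\textbf{Step 1: $\ell$ is a line of curvature.} Differentiate $N$ along $\ell$:
\[
N'=\gamma'\times T+\gamma\times T'=\gamma\times T'.
\]
Since $|\gamma|=1$ and $\langle\gamma,T\rangle=0$, differentiating gives $\langle\gamma,T'\rangle=-1$. Decompose $T'$ in the orthonormal frame $\{T,\gamma,\gamma\times T\}$; there is no $T$-component because $|T|=1$. This gives $T'=-\gamma+\kappa_{\mathbb{S}}\,\gamma\times T$, where $\kappa_{\mathbb{S}}$ is the geodesic curvature of $\gamma$ in the sphere. Hence
\[
N'=\gamma\times T'=\kappa_{\mathbb{S}}\,\gamma\times(\gamma\times T)=-\kappa_{\mathbb{S}}T.
\]
So $dN(T)$ is parallel to $T$, and $T$ is a principal direction. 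This is the classical Joachimsthal-type argument, and it shows that $\tau_g=\langle N',\eta\rangle$ vanishes up to sign convention.

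\textbf{Step 2: geodesic curvature.} The geodesic curvature of $\ell$ in $M$ is $\kappa_g=\langle T',\eta\rangle$ (up to a sign depending on orientation). Using the decomposition above, $\kappa_g=\langle -\gamma+\kappa_{\mathbb{S}}\,\gamma\times T,\,-\gamma\rangle=1$. Equivalently, the component of the curvature vector of $\gamma$ along the sphere normal is always $1$ for curves on the unit sphere, and that normal coincides with the conormal of $M$. Reversing the orientation conventions flips this to $-1$, which is why the statement says ``up to a possible change of orientation.''

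\textbf{Main obstacle: regularity.} The computation needs $M$ to be $C^2$ up to the free boundary $\ell$ (away from the endpoints $p,q$), so that $N$ and $\eta$ are differentiable along $\ell$ and the orthogonality holds pointwise rather than in a weak or stationary sense. For a smooth (here analytic) support surface $S=\mathbb{S}^2$, this follows from the boundary regularity theory for stationary free-boundary minimal surfaces in \cite{dierkes_minimal_1992} (with reflection across the sphere, e.g.\ via inversion, giving analyticity). I would cite that result and restrict the claim to the interior points of $\ell$.
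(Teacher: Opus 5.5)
Your proof is correct and follows the paper's argument. The paper cites Joachimsthal's theorem for the line-of-curvature property and notes that $|\kappa_g|$ of $\ell$ in $M$ equals the normal curvature of $\ell$ in $\mathbb{S}^2$, namely $1$; your Darboux-frame computation ($N'=-\kappa_{\mathbb{S}}T$ and $\langle T',-\gamma\rangle=1$) simply makes both facts explicit. The regularity caveat is a fair addition that the paper leaves implicit, but analyticity up to the free boundary should be credited to the boundary regularity theory in \cite{dierkes_minimal_1992}, not to a literal reflection by inversion, since inversion does not map minimal surfaces to minimal surfaces.
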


Since $M$ is orthogonal to $\s^2$ along $\ell$, we obtain using
Joachimsthal's Theorem that $\ell$ is a line of curvature of
$M$. Moreover, in absolute value, the geodesic curvature vector of
$\ell$ in $M$ coincides with the normal curvature of $\ell$ in
$\mathbb{S}^2$. Hence $|\kappa_g|=1$
  at any point. This proves the claim.

Identities~\eqref{eq:conj}
say in this particular case that, along the conjugate curve
$\ell^*\subset M^*$, we have
\begin{equation}\label{eq:relacion}
\kappa_g^*=\kappa_g=1 , \quad \kappa_n^* = - \tau_g=0, \quad \tau_g^* =
  \kappa_n.
\end{equation}
We then obtain that $\ell^*$ is an asymptotic curve of $M^*$ with
constant (geodesic) curvature $1$.

\section{Construction method for solutions to some thread problems}

In this section, we are going to construct, using the
Plateau-conjugate method from the solution of a partially-free
boundary problem, a surface solution to the thread problem explained
above.

Let $\Gamma\subset\mathbb{R}^3-\mathbb{B}$ be a piecewise smooth curve
with endpoints $\widehat {p_1}$, $\widehat {p_2}$ in $\mathbb{S}^2$. (In Subsections~\ref{sub:cat} and
\ref{sub:k} $\Gamma$ will be the union of finitely many straight segments.)
By Theorem~\ref{th:semifree}, there exists a minimal disk $M$ with
boundary $\Gamma\cup\ell$, with $\ell=\partial M\cap\mathbb{S}^2$, so
that $M$ and $\mathbb{S}^2$ intersect orthogonally along
$\ell$. ($\Gamma$ is a rigid wire whereas $\ell$ is a thread.)
As proved above (see Claim~\ref{cl:curvatureline}), $\ell$ is a line
of curvature of $M$, and the geodesic curvature of $\ell$ (as a curve
in $M$) is $1$ at any point. 
In this section, we will first obtain general properties of $M$ and $\ell$ under certain assumptions, and then we will construct particular cases that will allow us to obtain the desired surfaces described in the introduction.

\begin{remark}\label{r:transverse}
  Since the exterior conormal to $M$ along $\ell$ points to the origin
  of $\mathbb{R}^3$ at any point $p\in\ell$, the tangent plane
  $P$ of $M$ at $p$ passes through the origin; its direction is
  determined by~$\ell'$. By the maximum principle at the boundary we
  obtain that either $M$ is contained in $P$ or it has points on both
  sides of $P$ around $p$. 
\end{remark}

Let us now prove a kind of ``convex hull'' property for one such
surface $M$, illustrated in~\cref{fig:lemma4.3}.

\begin{lemma}\label{l:CHull}
  With the notation above, if $\Pi$ is a plane passing through the
  origin, $\Pi^+$ is a component of $\mathbb{R}^3-\Pi$ and $\Gamma\subset
  \overline{\Pi^+} = \Pi^+\cup \Pi$, then the interior of both $M$ and
  $\ell$ are contained in~$\Pi^+$.
\end{lemma}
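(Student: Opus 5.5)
We argue by a maximum principle applied to the harmonic height function over $\Pi$, combined with the boundary behaviour along the free arc $\ell$ described in Remark~\ref{r:transverse}. Let $\nu$ be the unit normal of $\Pi$ pointing into $\Pi^+$ and set $h=\langle x,\nu\rangle$ on $M$. Since $M$ is minimal, $h$ is harmonic on $M$, and the hypothesis gives $h\ge 0$ on $\Gamma$. We first show that $h\ge 0$ on all of $M$, and then upgrade this to the strict inclusions.

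\textbf{Step 1: $h\ge 0$ on $M$.} Suppose $\min_M h<0$. The minimum is attained on $\partial M=\Gamma\cup\ell$ by the maximum principle, and not on $\Gamma$, so it is attained at some $p\in\ell$ in the interior of $\ell$. The endpoints $\widehat{p_1},\widehat{p_2}$ are excluded because they lie on $\Gamma$, where $h\ge0$. We now use the free boundary condition. The exterior conormal of $M$ at $p$ is $\eta=-p$, since $|p|=1$ and $\eta$ points to the origin. As $p$ is a minimum of $h$ on $M$, $h$ is a minimum along the boundary curve at $p$, and Hopf's lemma at a boundary minimum of a nonconstant harmonic function gives $\partial_\eta h(p)<0$. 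But $\partial_\eta h(p)=\langle -p,\nu\rangle=-h(p)>0$, a contradiction. (If $h$ is constant, then $h\equiv h(p)<0$, contradicting $h\ge0$ on $\Gamma$.) The same computation also gives $\partial_\eta h=0$ whenever $h=0$ at a point of $\ell$.

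\textbf{Step 2: strictness.} By the strong maximum principle, interior points of $M$ satisfy $h>0$ unless $h\equiv0$. The case $h\equiv 0$ means $M\subset\Pi$; this is the degenerate planar case, excluded when $\Gamma\not\subset\Pi$, and in the statement we implicitly assume it away or treat it separately. It remains to exclude $h(p)=0$ at an interior point $p$ of $\ell$. At such a point $h$ attains its minimum $0$ on $M$, so Hopf gives $\partial_\eta h(p)<0$, whereas $\partial_\eta h(p)=-h(p)=0$, a contradiction. Hence the interior of $\ell$ lies in $\Pi^+$ as well. One can reach the same conclusion from Remark~\ref{r:transverse}: the tangent plane of $M$ at $p$ passes through the origin, and $M$ either lies in it or has points on both sides of it near $p$. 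Since $p\in\Pi$ and $M\subset\overline{\Pi^+}$ locally, this tangent plane must then be $\Pi$ itself, and the two-sided conclusion of the remark contradicts $M\subset\overline{\Pi^+}$.

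\textbf{Main obstacle.} The delicate point is the boundary regularity needed for Hopf's lemma along the free boundary $\ell$ near the corners $\widehat{p_i}$, where $\Gamma$ meets $\mathbb{S}^2$. Hopf is applied only at interior points of $\ell$, where $M$ is smooth up to the boundary by the regularity theory for free boundary problems in \cite{dierkes_minimal_1992}. The corners themselves lie on $\Gamma$, and there the inequality $h\ge0$ is given by hypothesis. One must still check that the minimum in Step 1 cannot escape to a corner; this holds because $h$ is continuous up to the boundary and $h\ge0$ at $\widehat{p_i}\in\Gamma$.
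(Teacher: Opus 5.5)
Your proof is correct, but its main step takes a different route from the paper.

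\textbf{The paper's route.} To get $M\subset\overline{\Pi^+}$, the paper argues variationally. Every component of $M\cap\Pi^-$ must reach $\ell$. Replacing these components by the corresponding pieces of $\Pi-\mathbb{B}$ (itself free-boundary along $\mathbb{S}^2$) would give a competitor of smaller area, contradicting the minimizing property in Theorem~\ref{th:semifree}.

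\textbf{Your route.} You use only harmonicity of $h$ and the free-boundary condition. Because $\Pi$ passes through the origin and the exterior conormal along $\ell$ is $-p$, you get the Robin-type identity $\partial_\eta h=-h$ along $\ell$. This is incompatible with a negative boundary minimum.

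\textbf{What each buys.}
\begin{itemize}
\item Your argument is local. It works for any stationary solution, not only a minimizer. That is relevant, for instance, when the paper applies these containment results to an arbitrary second solution $\tilde M$ in the uniqueness corollary.
\item It avoids details the paper leaves implicit: admissibility of the cut-and-pasted $\tilde M$, what ``the corresponding regions'' of $\Pi-\mathbb{B}$ are (orthogonal projection onto $\Pi$ can land inside $\mathbb{B}$), and strictness of the area decrease.
\item The paper's route, in exchange, needs no boundary regularity or boundary analysis for that step.
\end{itemize}

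\textbf{Smaller remarks.}
\begin{itemize}
\item In Step 1 you do not need Hopf. At a smooth boundary minimum, the first-order condition $\partial_\eta h(p)\le 0$ already contradicts $\partial_\eta h(p)=-h(p)>0$. Hopf is genuinely needed in Step 2, where $h(p)=0$. It is also needed if you want to allow boundary branch points, which you can handle uniformly in a conformal parameter domain, where $\partial_v h=\lambda h$ with conformal factor $\lambda\ge 0$.
\item Your alternative in Step 2 via Remark~\ref{r:transverse} is exactly the paper's boundary-maximum-principle argument.
\item Your caveat about the degenerate case $M\subset\Pi$ is implicit in the paper too: its ``by the maximum principle, $M$ cannot have interior points in $\Pi$'' also assumes it away.
\item Your conormal computation is the same device the paper uses in item~3 of Lemma~\ref{l:ConvexHull} and at the end of the proof of Proposition~\ref{p:graph}. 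There it is applied to planes not through the origin, where extra information ($M\subset W_z$) is needed for the sign. Here that sign is supplied directly by $h(p)$.
\end{itemize}
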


\begin{figure}[htb!]
    \centering
    \includegraphics[width=0.5\linewidth]{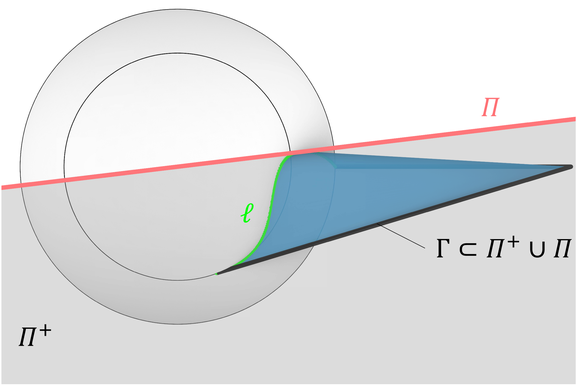}
    \caption{Projection of a configuration of
      Lemma~\ref{l:CHull}. }
    \label{fig:lemma4.3}
\end{figure}

\begin{proof}
  Let us suppose there exist points in $M\cap \Pi^-$, where
  $\Pi^-\cup \Pi^+=\mathbb{R}^3-\Pi$.  By the convex hull property, any
  connected component of $M\cap \Pi^-$ contains points of $\ell$ in its
  boundary. Consider the (non-smooth) surface $\tilde M$ obtained by
  replacing those components with the corresponding regions in
  $\Pi-\mathbb{B}$. (We observe that $\Pi-\mathbb{B}$ is free-boundary.)
  Thus $\tilde M$ is another solution to the partially-free boundary
  problem related to $\Gamma$ and $\mathbb{S}^2$ with less area
  than~$M$, a contradiction. Hence $M\subset \Pi\cup \Pi^+$.

  By the maximum principle, $M$ cannot have interior points in $\Pi$,
  so only points in $\partial M$ can touch the plane, and then the
  interior of $M$ is contained in $\Pi^+$. By the maximum principle at
  the boundary (using that both $M$ and $\Pi$ are tangent at any
  common point in $\ell$), we also get that no interior point of
  $\ell$ can be contained in $\Pi$.
  \end{proof}

 A similar proof using that $M$ is a minimum for the area functional can be applied to obtain the following result

 \begin{lemma}\label{l:P}
If  $\Pi$ is a plane passing through the origin and $M'\subset
   M$ is a component with boundary contained in $\mathbb{S}^2\cup
   \Pi$, then $M'$ (and so $M$) is contained in $\Pi$.
\end{lemma}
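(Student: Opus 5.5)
The idea is a cut-and-paste comparison, as in Lemma~\ref{l:CHull}, using that $M$ minimizes area among admissible surfaces in the partially-free problem $\langle\Gamma,\mathbb{S}^2\rangle$ (Theorem~\ref{th:semifree}).

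Let $M'\subset M$ be a component with $\partial M'\subset \mathbb{S}^2\cup\Pi$. Split its boundary as $\partial M'=\alpha\cup\beta$, where $\alpha\subset\Pi$ and $\beta\subset\ell\subset\mathbb{S}^2$. I will project $M'$ onto the plane and show the projection has strictly smaller area unless $M'$ already lies in $\Pi$. Concretely, let $\pi:\mathbb{R}^3\to\Pi$ be the orthogonal projection and consider the region $D\subset\Pi-\mathbb{B}$ bounded by $\alpha$ and pieces of the great circle $\Pi\cap\mathbb{S}^2$. This region plays the role of "the corresponding regions in $\Pi-\mathbb{B}$" from the proof of Lemma~\ref{l:CHull}.

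Replace $M'$ by $D$ to obtain $\tilde M=(M-M')\cup D$. This is still an admissible surface for $\langle\Gamma,\mathbb{S}^2\rangle$:
\begin{enumerate}
\item its fixed boundary is unchanged, since $\Gamma\cap M'\subset\alpha$ stays in $\Pi$;
\item its free boundary still lies on $\mathbb{S}^2$;
\item it remains outside $\mathbb{B}$;
\item it remains a disk-type competitor, up to the usual Courant--Lebesgue/energy approximation argument used for Theorem~\ref{th:semifree}.
\end{enumerate}

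For the area comparison, I will use the calibration by the constant unit normal $\nu$ of $\Pi$. Writing $\omega=\iota_\nu dV$, which restricts to the area form on $\Pi$, Stokes' theorem on the region enclosed by $M'$, $D$ and the portion of $\mathbb{S}^2$ between $\beta$ and $\Pi\cap\mathbb{S}^2$ gives
\[
\operatorname{Area}(M')\ \ge\ \Big|\int_{M'}\omega\Big|\ \ge\ \operatorname{Area}(D),
\]
with equality in the first inequality only if $N=\pm\nu$ along $M'$.

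The hardest point is controlling the flux of $\omega$ through the spherical piece. I would handle it using that the sphere portion is swept radially: for a plane $\Pi$ through the origin, the radial projection to $\Pi$ reduces area as in Lemma~\ref{l:CHull}, since $\Pi-\mathbb{B}$ is itself free-boundary. Alternatively I would argue directly that $\pi(M')-\mathbb{B}$ covers $D$.

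Minimality of $M$ then forces equality, so $N\equiv\pm\nu$ on $M'$, and hence $M'\subset\Pi$. Finally, $M$ is connected and real-analytic in its interior, so the analytic continuation of the planar piece $M'$ gives $M\subset\Pi$. If equality held only in the weaker sense that $\tilde M$ is another minimizer, I would instead note that $\tilde M$ has a corner along $\alpha$ unless $M$ is tangent to $\Pi$ there. A minimizer is smooth, so by unique continuation $M$ coincides with $\Pi$ near $\alpha$, again giving $M\subset\Pi$.
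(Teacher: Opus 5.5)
Your proposal is correct and follows the paper's route. The paper proves this lemma only by saying that the cut-and-paste argument of Lemma~\ref{l:CHull} goes through: replace $M'$ by the corresponding region of $\Pi-\mathbb{B}$ and contradict area minimality. Your calibration/projection comparison just makes that area inequality explicit.

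On the step you flag as hardest, use the covering argument; the radial-projection detour is unnecessary. Since $\Pi$ passes through the origin, the normal line to $\Pi$ through any point of $D$ misses $\overline{\mathbb{B}}$, so it must meet $M'$. Equivalently, the divergence theorem applied to $\nu$ shows that the flux through the spherical piece $S$ has the favorable sign, giving $|\int_{M'}\omega|=\operatorname{Area}(D)+\int_S|\langle x,\nu\rangle|\,dA$.
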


 We observe that, in the hypothesis of Lemma~\ref{l:CHull}, we get
 that the boundary curve $\ell$ lies in the hemisphere
 $\mathbb{S}^2\cap \Pi^+$. Since this hemisphere projects graphically on
 $\Pi$ and $\ell$ is embedded, we conclude that the orthogonal
 projection of $\ell$ on $\Pi$ is injective. The following proposition
 says that, under some extra hypothesis, the whole surface $M$
 projects graphically on the plane.

\begin{proposition}\label{p:graph}
  Given a plane $\Pi$ passing through the origin, let us suppose that
  the following properties are satisfied (with the notation above):
\begin{enumerate}
  \item[(a)] The planes orthogonal to $\Pi$ separate $\Gamma$ in at
  most three components.
\item[(b)] $M$ is contained in a convex wedge region $W$ bounded by
  two half-planes orthogonal to $\Pi$ whose common boundary $L_W$
  passes through the origin, each half-plane containing an endpoint of
  $\ell$. 
\item[(c)] The plane $P_0$ orthogonal to $\Pi$ passing through the two
  endpoints $\widehat {p_1}, \widehat {p_2}$ of $\ell$ separate $\Gamma$ from the origin possibly in a non strictly way i.e.
  $$\R^3 \setminus P_0=P_0^+ \cup P_0^- \hbox{ with } 0 \in \overline{P_0^-},\;\; \Gamma \subset \overline{P_0^+} \hbox{ but } \Gamma \not\subset P_0.$$
\item[(d)] Up to straight segments contained in $\Gamma$ and
  orthogonal to $\Pi$ (if any), $\partial M= \Gamma \cup \ell$ is
  projected injectively onto a Jordan curve in $\Pi$ which bounds a
  domain $\Omega$.
\end{enumerate}
Then $M$ is a graph over the domain $\Omega\subset\Pi$.
\end{proposition}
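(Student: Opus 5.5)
Let $v$ be a unit vector orthogonal to $\Pi$. The goal is to show that every line parallel to $v$ meets $M$ in at most one point, and that the projection of $M$ onto $\Pi$ is exactly $\overline\Omega$. I would use the classical Radó-type argument, run through Rotation. For a unit vector $w\in\Pi$, rotate $v$ towards $w$: set $v_\theta=\cos\theta\, v+\sin\theta\, w$ and consider the planes $Q_\theta$ spanned by $v_\theta$ and lines in $\Pi$ orthogonal to $w$. In practice the cleanest version is the standard one. Take any plane $Q$ orthogonal to $\Pi$. Suppose $M$ is not a graph. Then either the normal of $M$ is horizontal (tangent to $\Pi$) at some interior point, or two points of $M$ lie on a common line parallel to $v$. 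In both cases there is a plane $Q$ orthogonal to $\Pi$ such that $Q\cap M$ contains a point where $M$ is tangent to $Q$, or a closed curve or a configuration enclosing a region. The contradiction will come from counting the components of $Q\cap\partial M$ against the structure of $Q\cap M$.

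\textbf{Step 1: counting boundary intersections.} For a plane $Q\perp\Pi$, bound $\#(Q\cap\partial M)$, counting arcs of segments of $\Gamma$ orthogonal to $\Pi$ as single points. By (a), $Q$ cuts $\Gamma$ in at most two points. The curve $\ell$ lies on the hemisphere $\mathbb{S}^2\cap\Pi^+$ by Lemma~\ref{l:CHull} (when applicable), and its projection is injective. Together with (c), which places $\Gamma$ beyond $P_0$ and the origin on the other side, and (b), which confines $\ell$ to the wedge $W$, I expect $Q\cap\ell$ to be at most two points. More importantly, by (d) the projected Jordan curve $\partial\Omega$ meets the line $Q\cap\Pi$ at most four times. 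This should be refined to two points generically, when $Q$ cuts $\Gamma$ twice it avoids $\ell$ or vice versa, using (c). Then $Q\cap M$ is a union of analytic curves whose endpoints are among these points.

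\textbf{Step 2: excluding vertical tangency and double points.} Suppose $M$ is tangent to $Q$ at an interior point $x$. Then $Q\cap M$ near $x$ consists of at least $4$ arcs emanating from $x$. None of them can close up into a loop bounding a disk in $M$, by the maximum principle (or by Lemma~\ref{l:P} type argument for pieces bounded by $\mathbb{S}^2\cup Q$ when $Q$ passes through the origin). So the arcs all reach $\partial M$ at distinct points, which requires at least $4$ boundary points. One also has to treat boundary tangency: along $\ell$ use Remark~\ref{r:transverse} (Hopf at the boundary), and along $\Gamma$ use the boundary maximum principle. The same counting applied to a line parallel to $v$ hitting $M$ twice, through a continuity/sweeping argument with planes containing that line, yields a tangency and hence a contradiction. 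Once the normal is never parallel to $\Pi$ in the interior, the projection is a local diffeomorphism. Since it is injective on $\partial M$ with image $\partial\Omega$ by (d), degree theory gives that it is a diffeomorphism onto $\Omega$.

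\textbf{Main obstacle.} The hardest step is the sharp count in Step 1, showing $\#(Q\cap\partial M)\le 3$ in the relevant configurations. This must hold for every plane orthogonal to $\Pi$, not only those through the origin, and it must handle planes $Q$ that cut both $\Gamma$ (twice) and $\ell$. I would first try to show, using (b) and (c), that any $Q$ meeting $\ell$ in two points must avoid cutting $\Gamma$ twice. If the count cannot be made sharp, I would instead use the fact that the free boundary $\ell$ can absorb arcs only in a controlled way: an arc of $Q\cap M$ from $x$ ending on $\ell$, together with the sphere, bounds a region, and Lemma~\ref{l:P} or minimality of area forces a contradiction.
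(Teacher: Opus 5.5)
You share the paper's reduction: by (d) it suffices to exclude an interior point $x$ where $M$ is tangent to a plane $Q\perp\Pi$. At such a point $Q\cap M$ contains at least four arcs from $x$. By the maximum principle these close no loops and end at distinct points of $\partial M$, and by (a) at most two of those points lie on $\Gamma$.

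The gap is the step you flag yourself: a bound $\#(Q\cap\partial M)\le 3$ is not available. Hypothesis (d) only says that the projection of $\partial M$ is an embedded Jordan curve, and such a curve can meet a line any number of times, so your ``at most four'' is unjustified. Nothing in (a)--(d) limits how often $\ell$ crosses a plane that misses the origin.

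The count in fact fails in the paper's own application. Take the tensile catenoid with $\Pi=\{z=0\}$, and the vertical plane through a point of $\ell_3$ close to $p_4$ and an interior point of $\ell_1$. This plane meets $\Gamma$ twice. It also meets $\ell_4$ at least twice. Indeed, $\ell_4$ leaves $p_4$ tangentially to $\ell_3$ (Remark~\ref{r:cusp}), so it runs alongside $\ell_3$ and passes to the other side of the plane. It must then return to $p_1$, which lies on the same side as $p_4$. So your hope that a plane meeting $\ell$ twice cannot cut $\Gamma$ twice is false.

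Your fallback, Lemma~\ref{l:P} or the area comparison behind it, only applies to planes through the origin. That is exactly the subcase the paper disposes of in that way.

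What is missing is a second use of the free-boundary condition for planes $Q$ that miss the origin. This is also where (b) and (c) enter, and your outline never really uses them. The argument runs as follows.

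\begin{itemize}
\item Let $Q^+$ be the open side of $Q$ not containing the origin. The sectors of $M$ at $x$ alternate sides of $Q$ and lie in distinct components of $M\setminus Q$. Hence at least two components $M_1^+,M_2^+$ lie in $Q^+$.
\item If $\Gamma\setminus Q$ has three components, (b) and (c) force $\widehat{p_1},\widehat{p_2}$ onto the origin's side of $Q$. Otherwise the middle arc of $\Gamma$ would lie in the part of $\overline W$ on the origin's side of $Q$, which lies strictly on the origin's side of $P_0$, contradicting (c).
\item So in every case at most one arc of $\Gamma$ lies in $Q^+$, and one component, say $M_2^+$, satisfies $\partial M_2^+\subset \ell\cup Q$.
\item The distance to $Q$ then attains its maximum on $\overline{M_2^+}$ at some $q\in\ell$. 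But the inward conormal of $M$ at $q$ is the position vector $q$. For the unit normal $N_Q$ pointing into $Q^+$ we have $\langle q,N_Q\rangle>0$. So points of $M_2^+$ near $q$ are farther from $Q$ than $q$, a contradiction.
\end{itemize}

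This argument, together with Lemma~\ref{l:P} for planes through the origin, excludes interior points with normal parallel to $\Pi$. Only then does your closing covering/degree argument with (d) finish the proof. The separate sweeping argument for double points is not needed.
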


\begin{remark}\label{r:convex}
We observe that condition $(a)$ is satisfied when the projection of
$\Gamma$ over $\Pi$, together with the straight segment joining its
endpoints, is a convex domain.
\end{remark}

\begin{proof}
  First of all, we can assume (to simplify the notation) that
  $\Pi=\{z=0\}$. 
  
  By $(d)$, it suffices to prove that $M$ is a multigraph over
  $\Pi$. Let us then suppose by contradiction that there exists a
  point $p\in M$ with vertical tangent plane $P=T_p M$. Locally at
  $p$, $M\cap P$ consists of at least two curves that form an equiangular system, where $M$ passes from one side to another of the
  plane $P$.

  By the maximum principle, the curves in $M\cap P$ cannot form a
  closed loop, as we would obtain a bounded domain in $M$ whose
  boundary is entirely contained in $P$, and $M$ would be contained
  in~$P$. For the same reason, no two of these curves can have a
  common endpoint in $\partial M$, nor two of them can have endpoints
  contained in the same vertical segment in $\Gamma$, if any. In
  particular, the curves in $M \cap P$ disconnect $M$ into at least
  four connected components and $\partial M\cap P$ consists of at
  least four points.

  By hypothesis $(a)$, at most two points in $\Gamma$ can be endpoints
  of curves in $M \cap P$, so $P$ intersects $\ell$ in at least two
  points.  In particular, there exists a component $M'$ of $M-P$
  whose boundary is contained in $\ell\cup P$. By Lemma~\ref{l:P},
  if $P$ passes through the origin, we get that $M'$ is contained
  in~$P$, a contradiction. Thus $P$ cannot contain the origin. Call $P^-,P^+$ the connected components of $\R^3-P$ such that the origin is contained in $P^-$.

  From the fact that the curves in $M\cap P$ must have different endpoints in $\partial M$, we deduce that there are at least two connected components of $M-P$ in $P^+$, called $M_1^+$ and $M_2^+$.
  
  Hypothesis $(a)$ says that $\Gamma-P$ can have at most three components.   
  If it has one or two components, then we can assume that $\Gamma\cap P^+$ has at most one connected component contained (if any) in $\partial M_1^+$, and  $\partial M_2^+$ is contained in $\ell\cup P$.

  Let us now suppose that $\Gamma-P$ has three components, and call $\Gamma^+$ the component not having $\widehat {p_1}$ nor $\widehat {p_2}$ (defined in hypothesis $(c)$) as endpoint. We recall that $M\subset W$ by hypothesis $(b)$. Then hypothesis $(c)$ says in particular that $P_0$ separates $\Gamma\cap P$ from the origin. Thus $\widehat {p_1},\widehat {p_2}\in P^-$, and $\Gamma^+\subset P^+$. Assume that $\Gamma^+\subset\partial M_1^+$. Hence in this case we also get that $\partial M_2^+\subset \ell\cup P$.
   
By the
  maximum principle, the distance from $M_2^+$ to $P$ attains its
  maximum value at a point $q\in \ell$.
 On the other hand, since the inner conormal $\overrightarrow{n_q}$
  to $M$ along $\ell$ at $q$ coincides with its position vector, we
  get $\langle \overrightarrow{n_q}, N_P\rangle >0$,  where $N_P$ denotes the Gauss map of $P$ that
  points to $P^+$. In particular, there must be
  interior points of $M_ 2^+$ farther from $P$ than $q$, a
  contradiction.  This contradiction proves that $M$ is a multigraph
  on $\Pi$, and Proposition~\ref{p:graph} follows.
\end{proof}

\begin{figure}
    \centering
    \includegraphics[width=0.8\linewidth]{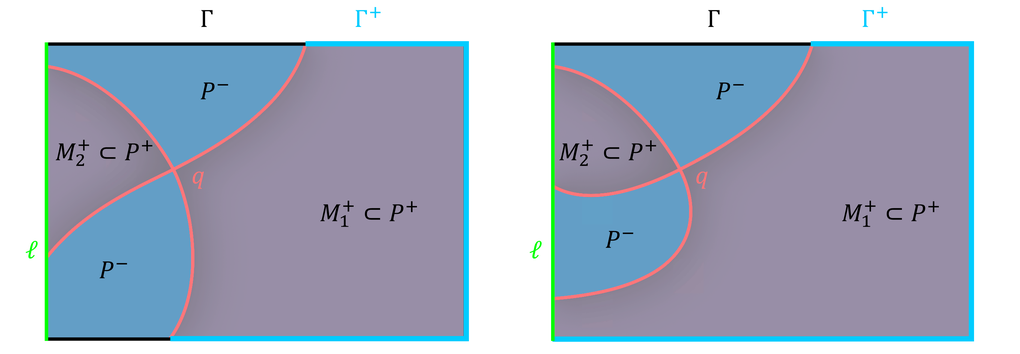}
    \caption{Domains with the preimages of the boundary (black for $\Gamma$ and green for $\ell$) and intersection curves
(in red), showing the subdomains that are mapped to $P^+$ and $P^-$. Left: case where $\Gamma-P$ consists of three components, right: case where $\Gamma-P$ consists of exactly two components.}
    \label{fig:domainP+P-}
\end{figure}

\begin{lemma}\label{l:radial}
In the hypothesis of Proposition~\ref{p:graph}, if $\partial M$ does
not intersect $L_W$, defined in $(b)$, then the projection of $\ell$ in
$\Pi$ is a radial graph from the origin.
\end{lemma}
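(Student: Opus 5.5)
The plan is to argue by contradiction with a vertical plane through the origin, using Lemma~\ref{l:P} together with the boundary behaviour of $M$ along $\ell$. As in the proof of Proposition~\ref{p:graph}, take $\Pi=\{z=0\}$, so $L_W$ is a vertical line through the origin. Saying that the projection $\pi(\ell)$ is a radial graph from the origin means two things:
\begin{enumerate}
\item[(i)] $\pi(\ell)$ avoids the origin, which is automatic because $\partial M\cap L_W=\emptyset$ and $\pi^{-1}(0)=L_W$;
\item[(ii)] every ray from the origin in $\Pi$ meets $\pi(\ell)$ at most once.
\end{enumerate}
Condition (ii) is the same as requiring that every vertical half-plane $H_\theta$ bounded by $L_W$ meets $\ell$ at most once. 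By hypothesis $(b)$ only the half-planes inside $W$ are relevant. The two extreme half-planes each contain one endpoint $\widehat{p_i}$ of $\ell$, and the angular coordinate along $\ell$ should then run monotonically from one to the other.

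Step 1 (transversality). The tangent plane $T_pM$ at $p\in\ell$ contains the position vector $p$, which is the conormal direction, and also $\ell'(p)$. If $\ell$ were tangent to a vertical plane $P$ through the origin at $p$, then $p$ and $\ell'(p)$ would both lie in $P$. Since $p\notin L_W$, the vector $p$ has nonzero horizontal component, so $T_pM=P$ unless $\ell'(p)$ is parallel to $p$. That cannot happen, because $\ell\subset\mathbb{S}^2$ forces $\ell'\perp p$. Hence $T_pM=P$, and by Remark~\ref{r:transverse} $M$ has points on both sides of $P$ near $p$, in an equiangular configuration.

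Step 2 (counting intersection arcs). Suppose some vertical plane $P\supset L_W$ meets $\ell$ in at least two interior points, or is tangent to $\ell$ at one point. I would then study the arcs of $M\cap P$ exactly as in the proof of Proposition~\ref{p:graph}. By the maximum principle these arcs contain no closed loops, and no two of them share an endpoint on $\partial M$. In addition, $P\cap\Gamma$ is controlled by hypotheses $(a)$ and $(c)$, since $\Gamma$ lies in $\overline{P_0^+}$ on the far side from the origin. The goal is to produce a component $M'$ of $M-P$ with $\partial M'\subset\ell\cup P$. Lemma~\ref{l:P} then forces $M'\subset P$, which is a contradiction. In the tangential case, Step 1 gives at least two arcs leaving $p$, and this is what produces such a component.

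Step 3 (the main obstacle). The hardest part is to show that the arcs of $M\cap P$ cut off a component whose boundary avoids $\Gamma$. This has to use the fact that $\Gamma$ meets $P$ in at most two points (hypothesis $(a)$), together with the cyclic order of $\ell\cap P$ and $\Gamma\cap P$ on the Jordan curve $\partial M$. The idea is that if $\ell$ crosses $P$ twice, the sub-arc of $\ell$ between the two crossings lies on one side of $P$. The arc of $M\cap P$ starting from one crossing must end on $\partial M$ somewhere. Because $\pi(\ell)$ is injective (Lemma~\ref{l:CHull}) and $M$ is a graph over $\Omega$ (Proposition~\ref{p:graph}), $M\cap P$ is a graph over the segment $\Omega\cap\pi(P)$. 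So each component of $M\cap P$ is a single arc joining two boundary points. Since $L_W\cap M=\emptyset$, the segment $\pi(P)\cap\Omega$ lies in a single ray, and its components pair up consecutive boundary points along that ray. An arc joining two consecutive points of $\ell\cap P$ then bounds a region of $M$ with boundary in $\ell\cup P$, which gives the contradiction through Lemma~\ref{l:P}. I would first try this graph-based pairing argument, because it avoids the delicate case analysis of Proposition~\ref{p:graph}.
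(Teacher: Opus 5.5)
Your proposal has a genuine gap at the step you yourself flag as the main obstacle: nothing in it produces an arc of $M\cap P$ with \emph{both} endpoints on $\ell$. The graph-based pairing is fine as far as it goes, since each component of $M\cap P$ lies over an interval of $\Omega\cap R$ and joins two consecutive points of $\partial\Omega\cap R$. But hypothesis $(a)$ still allows two points of $\Gamma\cap P$. If $\ell\cap P$ has only two points, which is exactly the case you set up in Step~3, your argument does not exclude the pairing $\ell$--$\Gamma$, $\ell$--$\Gamma$. Then every piece of $M-P$ has part of $\Gamma$ in its boundary, and Lemma~\ref{l:P} cannot be applied. The tangential case has the same problem. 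The two arcs leaving $p$ may simply end at the two points of $\Gamma\cap P$, so ``this is what produces such a component'' is unsupported. Moreover, the alternation of consecutive boundary points along the ray is only clean when all crossings are transverse.

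The missing ingredient is a parity argument, and it is exactly how the paper proceeds. The endpoints $\widehat{p_1}$ and $\widehat{p_2}$ lie on the two different boundary half-planes of $W$, while the rest of $\ell$ lies in $W$. So $P\cap W$ separates them: $\ell$ meets $P$, and counted transversally it crosses $P$ an odd number of times. If $P$ met $\ell$ more than once, a slight rotation of $P$ about $L_W$ removes tangencies and leaves at least three transverse crossings. This also makes your Step~1 and the separate tangential case unnecessary. With at least three points in $\ell\cap P$ and at most two in $\Gamma\cap P$, some arc of $M\cap P$ joins two points of $\ell$. You can get this by pigeonhole on your pairing, or by the paper's maximum-principle bookkeeping (no closed loops, no shared endpoints). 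An innermost such arc cuts off a component of $M-P$ with boundary in $\ell\cup P$, and Lemma~\ref{l:P} gives the contradiction. Once this is added, using the graph property from Proposition~\ref{p:graph} to organize $M\cap P$ is a legitimate and somewhat cleaner alternative to the paper's loop/endpoint argument.
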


\begin{proof}
Let $P$ be a plane such that it  contains $L_W$ (\textit{i.e.} it is orthogonal to $\Pi$ and it passes through the origin) and it intersects the wedge $W$.  
Since each endpoint of $\ell$ is contained in a different half-plane in the boundary of the wedge $W$, we get that $P$ separates the endpoints of $\ell$, and $\ell\cap P$ is not empty. 

Let us prove that $P$ can
intersect $\ell$ in exactly one point. Suppose this is not the case. 
After possibly rotating the plane slightly about $L_W$  to avoid tangency points, we can
suppose that $P$ intersects $\ell$ transversely in at least three points.
By the maximum principle, the curves in $M\cap P$ cannot form a closed loop nor two of these curves can have a common endpoint in $\partial M$ (nor in the same vertical segment in $\Gamma$, if any).
By hypothesis $(a)$, $P$ intersects $\Gamma$ in at most two points. Hence there must exist a component of $M - P$ whose boundary is
contained in $\ell\cup P$. We reach a contradiction
with Lemma~\ref{l:P}.
 \end{proof}

We next show that, in the setting above, $\ell$ is a convex
(or concave) curve on the sphere under some extra assumptions.

\begin{proposition}\label{p:kg}  
  If the interior of $\Gamma$ is contained strictly on one side of the
  plane $P_1$ passing through its endpoints and the origin, and furthermore
  any plane divides $\Gamma$ in at most three connected components, then the
  geodesic curvature of $\ell$ with respect to $\mathbb{S}^2$ never
  vanishes. 
\end{proposition}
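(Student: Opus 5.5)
We argue by contradiction. Suppose the geodesic curvature of $\ell$ in $\mathbb{S}^2$ vanishes at some interior point $q\in\ell$. Let $P$ be the plane through the origin tangent to $\ell$ at $q$, that is, the plane of the great circle osculating $\ell$ at $q$. As observed in Remark~\ref{r:transverse}, $P$ is exactly the tangent plane $T_qM$, since the conormal of $M$ at $q$ is radial and the tangent of $M$ at $q$ contains $\ell'(q)$. Because the geodesic curvature of $\ell$ in $\mathbb{S}^2$ vanishes at $q$, the great circle $P\cap\mathbb{S}^2$ has contact of order at least two with $\ell$ at $q$. Thus $P$ is tangent to $M$ at a boundary point of $M$, with higher-order contact along the free boundary. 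The plan is to convert this degenerate contact into at least four arcs of $M\cap P$ emanating from $q$, or from a neighbourhood of $q$, and then count endpoints.

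\textbf{Step 1: local structure at $q$.} Near $q$, reflect $M$ across $\mathbb{S}^2$. The free-boundary condition, with $M$ orthogonal to the sphere, lets us extend $M$ smoothly past $\ell$ by inversion $x\mapsto x/|x|^2$, because inversion fixes $\mathbb{S}^2$ and preserves orthogonality. This does not give a minimal extension. A cleaner route is to work in a boundary chart. Write $M$ near $q$ as a graph $u$ over $P$ on a half-disc whose edge is the projection of $\ell$. Then $u$ solves the minimal surface equation and satisfies a Neumann-type condition coming from orthogonality to the sphere. The degeneracy $\kappa_g^{\mathbb{S}^2}(q)=0$ gives $u=O(r^2)$ together with the vanishing of the relevant tangential second derivative. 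By the standard nodal-set analysis (Hopf/Bers type), either $M\subset P$ or the set $\{u=0\}$ near $q$ consists of at least two curves from $q$ into the interior of $M$. In the latter case $\ell$ crosses $P$ at $q$ or touches it to second order while $M$ changes side. Together with the points where $\ell$ leaves $P$ nearby, this yields at least two arcs of $M\cap P$ starting at $q$ and separating a neighbourhood of $q$ in $M$ into at least three sectors alternating sides of $P$. I expect this step, extracting the correct number of nodal arcs from the vanishing of the geodesic curvature, to be the main obstacle. I would first try to prove it by slightly rotating $P$ about the line $Oq$, which produces nearby planes through the origin meeting $\ell$ transversally in three points close to $q$.

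\textbf{Step 2: global counting.} Perturb $P$ as described, keeping it through the origin, so that it meets $\ell$ transversally in at least three points near $q$. Follow the curves of $M\cap P$. By the maximum principle they form no closed loops, and no two share an endpoint on $\partial M$. By hypothesis, $P$ cuts $\Gamma$ into at most three components, so $P\cap\Gamma$ contributes at most two endpoints. Since $P$ passes through the origin, as in the proof of Lemma~\ref{l:radial} there must then be a component $M'$ of $M-P$ whose boundary lies in $\ell\cup P$. Lemma~\ref{l:P} then forces $M\subset P$.

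\textbf{Step 3: excluding the planar case.} $M\subset P$ is impossible, because the interior of $\Gamma$ lies strictly on one side of the plane $P_1$. If $P=P_1$, the contradiction is immediate. Otherwise $\Gamma\not\subset P$, since a plane containing the endpoints, the origin and $\Gamma$ would be $P_1$. Either way we reach a contradiction, so the geodesic curvature of $\ell$ never vanishes.

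The hypothesis on $P_1$ also enters through Lemma~\ref{l:CHull}: it places $\ell$ in an open hemisphere, which keeps the perturbed planes well defined and keeps the count of intersection points with $\ell$ near $q$ under control.
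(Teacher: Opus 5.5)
\textbf{There is a genuine gap: your argument does not handle zeros of $\kappa_g^{\mathbb{S}^2}$ at which it does not change sign.}

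\textbf{Step 2.} This is the counting argument of Lemma~\ref{l:radial}, and it does give a contradiction once you have a plane through the origin meeting $\ell$ transversally in three points. But your perturbation produces such a plane only when $\kappa_g^{\mathbb{S}^2}$ changes sign at $q$. To see this, project centrally from the origin onto $T_q\mathbb{S}^2$. Planes through the origin become lines, and a zero of $\kappa_g^{\mathbb{S}^2}$ of order $k$ becomes a zero of order $k$ of the curvature of the image curve. If $k$ is even, for instance if the image is locally $y=cx^4$, the image is locally strictly convex. Every line then meets it at most twice near $q$; rotating about $Oq$ gives $cx^4=\epsilon x$, which has only two roots. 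So Steps~2--3 can at best show that $\kappa_g^{\mathbb{S}^2}$ does not change sign, not that it never vanishes.

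\textbf{Step 1.} This does not close the gap. Two nodal arcs from $q$ and three sectors is the picture at \emph{every} point of $\ell$. Because $\ell$ is a line of curvature, the height of $M$ over $T_qM$ is $\tfrac12\kappa_n(x^2-y^2)+O(r^3)$, with nodal rays at angles $\pi/4$ and $3\pi/4$. When $P$ meets $\Gamma$ twice, all three regions can touch $\Gamma$, so the count yields no contradiction.

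\textbf{The missing idea.} This is where the paper's proof starts: the vanishing of $\kappa_g^{\mathbb{S}^2}$ at $q$ makes $q$ a flat point of $M$.
\begin{enumerate}
\item Since $M\perp\mathbb{S}^2$ along $\ell$, the normal $N$ of $M$ is, up to sign, the conormal of $\ell$ in $\mathbb{S}^2$. Hence $\kappa_g^{\mathbb{S}^2}=\pm\kappa_n$, where $\kappa_n$ is the normal curvature of $\ell$ in $M$.
\item Since $\ell$ is a line of curvature and $M$ is minimal, the principal curvatures at $q$ are $\kappa_n(q)=0$ and $-\kappa_n(q)=0$. So the whole second fundamental form vanishes at $q$.
\item The height over $P=T_qM$ therefore vanishes to order $d\ge 3$. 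Orthogonality to the sphere acts as a Neumann condition on the leading term, which must be $c\,\mathrm{Re}(z^d)$ with $c$ real. This has $d\ge 3$ nodal rays in the open half-plane, so $P\cap M$ contains at least three arcs issuing from $q$.
\item These arcs end at distinct points of $\partial M$, so $M-P$ has at least four components. Since $P$ meets $\Gamma$ in at most two points, one component has boundary in $P\cup\ell$.
\end{enumerate}
Lemma~\ref{l:P} together with your Step~3 then gives the contradiction.
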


\begin{proof}
  Let us suppose there exists a point $p\in \ell$ where the geodesic
  curvature $k^{\s^2}_g$ with respect to $\mathbb{S}^2$ vanishes; equivalently, the
  normal curvature of $\ell$ at $p$ in $M$ is zero. Since $\ell$ is a
  line of curvature of $M$, we obtain that $p$ is a flat point
  (\textit{i.e.}  $K(p)=0$). Additionally if at $p$, we have $\partial k^{\s^2}_g(p)=0$, then $\nabla K(p)=0$.  In particular, if we denote by $P$ the
  tangent plane of $M$ at $p$ then $P \cap \overline M$ consists
  locally around $p$ of at least three curves emanating from~$p$. 
  
  If at $p$, $\ell$ is transverse to $P \cap M$, then $P\cap M$ has at least three curves emanating from $p$ into the interior of M as
  illustrated in~\cref{fig:prop4.5}-left.  If at $p$, $\nabla K(p)\neq 0$ and $\ell$ is tangent to $P$, then $\ell$ is not at the same side of $P$ around $p$ (it has an inflection point) as
  illustrated in~\cref{fig:prop4.5}-center. If at $p$, $\nabla K(p)=0$, we have at least four curves emanating from~$p$ as
  illustrated in~\cref{fig:prop4.5}-right.

\begin{figure}[htb!]
    \centering
    \includegraphics[width=\linewidth]{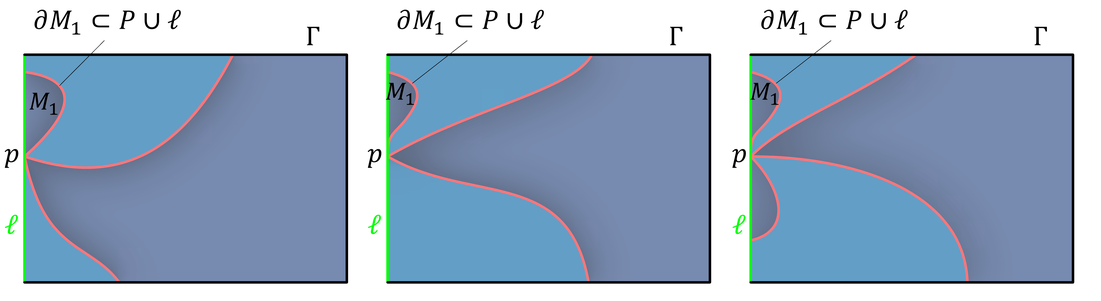}
    \caption{Schematic picture of the minimal surface $M$. Its
      boundary curves are drawn in black ($\Gamma$) and yellow
      ($\ell$). In red we have the intersection curves of $M$ and $P$. }
    \label{fig:prop4.5}
  \end{figure}
  
 By the maximum principle
  $\overline M\cap P$ cannot contain any closed loop, so the
  intersection curves can be extended to $\partial M$ and they do not
  intersect each other, even at boundary points. 

  Since $P$ divides $\Gamma$ in at most three components, in any case
  we conclude that $M-P$ has at least four connected components, at
  least one of them $M_1$ with boundary in $P\cup \ell$.  By
  Lemma~\ref{l:P} we get $M_1\subset P$, a contradiction.
\end{proof}

\subsection{Construction of the tensile catenoids}
\label{sub:cat}

In this section we prove the existence of the family of minimal annuli
bounded by a finite number of asymptotic curves of constant curvature
described in Theorem~\ref{th:cat}, called tensile catenoids (see
~\cref{fig:catenoid}).  They will be constructed by symmetry from a
fundamental piece obtained as the conjugate surface of the solution to
a partially-free boundary problem. Both conjugate surfaces are shown
in~\cref{fig:conj_correspondance}. We will start by describing
the partially-free boundary problem. Next, we will study the
properties of the resulted surface as a solution to that
problem. Finally, we will conjugate the surface and extend it by
symmetries.

\begin{figure}[htb!]
\centering
\begin{subfigure}[t]{0.5\textwidth}
\centering
    \includegraphics[height = 5cm]{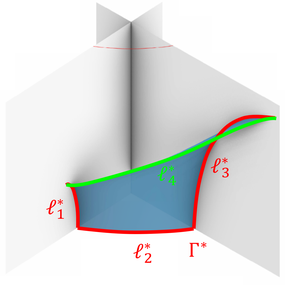}
    \label{fig:thread_ell}
\end{subfigure}
\hfill
\begin{subfigure}[t]{0.45\textwidth}
\centering
    \includegraphics[height = 4cm]{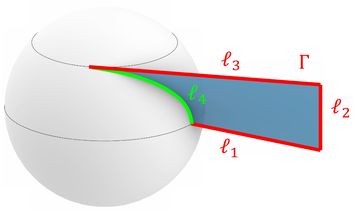}
    \label{fig:free_ell}
\end{subfigure}
\caption{The fundamental piece of a tensile catenoid and its conjugate
  minimal surface, solution of a partially-free boundary problem. The
  corresponding curves by conjugation are highlighted in the same
  colour: The union of planar lines of curvature $\Gamma^*$ (in red) is
  sent to the curve $\Gamma$ made of straight segments. The movable
  boundary curve $\ell_4^*$ (in green) corresponds to a free-boundary
  curve $\ell_4$ on the sphere.}
\label{fig:conj_correspondance}
\end{figure}

Let us consider the points $p_1=(0,1,0)$, $p_2=(0,1+a,0)$ and
$p_3=(0,1+a,h)$, for any $a>0$ and $h>0$ to be determined. We call
$\ell_1$ (resp. $\ell_2$) the straight segment joining $p_1$ and $p_2$
(resp. $p_2$ and $p_3$). We define $\ell_3$ as the horizontal straight
segment going from~$p_3$ to the point
$p_4\in\mathbb{S}^2\cap\{z=h,x>0\}$ so that $\ell_3$ meets the circle
$\mathbb{S}^2\cap\{z=h\}$ tangentially, see
~\cref{fig:configuration}.

\begin{figure}[htb!]
  \centering 
  \includegraphics[width=.6\textwidth]{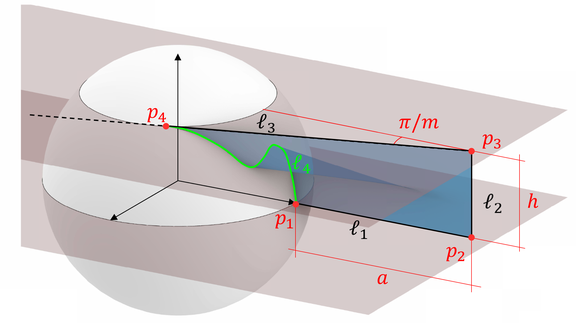}
  \caption{The piecewise smooth curve
    $\Gamma=\ell_1\cup \ell_2\cup \ell_3$ together with a curve
    $\ell_4$ in $\mathbb{S}^2$ joining the endpoints of $\Gamma$ and a surface
    bounded by $\Gamma\cup\ell_4$ meeting the sphere
    orthogonally.}
  \label{fig:configuration}
\end{figure}

We will also need to close the conjugate surface after reflections, so
we require the angle between the vertical planes containing $\ell_1$
and $\ell_3$, respectively, to be of the form $\frac{\pi}{m}$, for
some prescribed $m\geq 3$. We observe that once we fix~$m$, the
parameters $a, h$ depend on each other, so we have a 1-parameter
family of configurations. The parameter in our construction ($m$ is
fixed) can be considered to be $h\in(0,\cos\frac\pi m)$; thus
$a=\sqrt{1-h^2} \csc\frac\pi m -1$ (
See~\cref{fig:param}).

\begin{figure}[htb!]
  \centering \includegraphics[width=.8\textwidth]{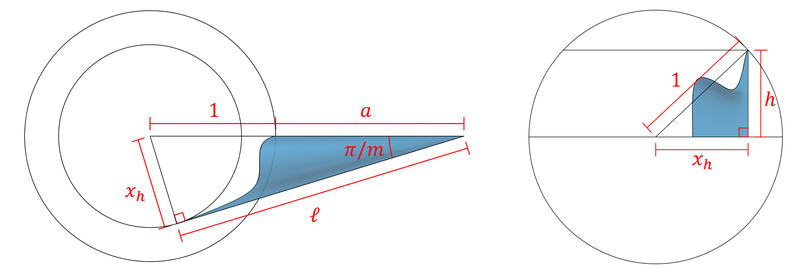}
  \caption{As $\ell_1$ is
    perpendicular to $\mathbb{S}^2$ and $\ell_3$ is tangent to
    $\mathbb{S}^2$, the parameters $h$ and $a$ depend on each other once we fix the angle $\frac\pi m$.} 
  \label{fig:param}
\end{figure}

By Theorem \ref{th:semifree}, there exists an embedded minimal surface
$M$ outside the unit ball $\mathbb{B}$ with
$\Gamma=\ell_1\cup\ell_2\cup\ell_3\subset\partial M$ and so that $M$
meets~$\mathbb{S}^2$ orthogonally along the smooth embedded curve
$\ell_4=\partial M-\Gamma\subset\mathbb{S}^2$. Moreover, $\ell_4$ is a
line of curvature of $M$ with constant geodesic curvature one (see
Claim~\ref{cl:curvatureline}).  We are going to prove that this
solution to the partially-free boundary problem is a graph in the
direction of the straight segments $\ell_1, \ell_2, \ell_3$ and
it is unique. First, let us describe the region containing the
surface~$M$.

\begin{lemma}\label{l:ConvexHull}
  Let us call $P_{p_4,z}$ the vertical plane passing through the
  origin and $p_4$, and $W_z$ the (open) wedge
  region from $P_{p_4,z}$ to $\{x=0\}$ with interior angle
  $\frac{m-2}{2m}\pi$ (it only contains points with
  positive $x$ and $y$ coordinates). Then the interior of both $M$ and $\ell_4$ are
  contained in $W_z\cap\{0<z<h\}$. Moreover:
\begin{enumerate}    
\item $M$ lies below the plane $P_{O,p_3,p_4}$ passing through
  $p_3,p_4$ (\textit{i.e.} containing $\ell_3$) and the origin.
\item $M$ lies above the plane $P_{O, p_1,p_4}$ passing through
  $p_1,p_4$ and the origin.
\item $M$ lies on one side of the vertical plane $P_{\ell_3,z}$
  containing $\ell_3$.
\end{enumerate}
\end{lemma}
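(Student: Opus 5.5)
The plan is to apply Lemma~\ref{l:CHull} once for each plane through the origin that has $\Gamma=\ell_1\cup\ell_2\cup\ell_3$ in one closed half-space. For each such plane $\Pi$, the lemma places the interiors of $M$ and $\ell_4$ in the corresponding open half-space $\Pi^+$. Intersecting these half-spaces gives every region in the statement except item (3), which does not involve a plane through the origin.

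First I would check the coordinates. We have $p_1=(0,1,0)$, $p_2=(0,1+a,0)$ and $p_3=(0,1+a,h)$. The point $p_4=(x_4,y_4,h)$ lies on the circle $\mathbb{S}^2\cap\{z=h\}$ with $x_4>0$, and $\ell_3$ is tangent to that circle at $p_4$. The vertical planes through $\ell_1$ and through $O,p_4$ meet at angle $\pi/m$.

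Then I would verify the five planes in turn.
\begin{itemize}
\item The plane $\{x=0\}$ contains $\ell_1,\ell_2$, and $\ell_3$ lies in $\{x\ge 0\}$. Hence $M$ lies in $\{x>0\}$.
\item The plane $P_{p_4,z}$ is vertical through $O$ and $p_4$. Since $\ell_3$ is tangent to the circle of radius $\sqrt{1-h^2}$ at $p_4$, the segment $\ell_3$ lies on the side of $P_{p_4,z}$ containing $p_3$, and so do $\ell_1,\ell_2$. Together with $\{x=0\}$ this gives $W_z$, with opening angle $\frac\pi2-\frac\pi m=\frac{m-2}{2m}\pi$.
\item The plane $\{z=0\}$ contains $\ell_1$, and $\Gamma\subset\{z\ge0\}$. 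Hence $M\subset\{z>0\}$.
\item For the bound $z<h$, the plane $\{z=h\}$ does not pass through the origin, so Lemma~\ref{l:CHull} does not apply. Instead I would use items (1) and (2): the plane $P_{O,p_3,p_4}$ contains $\ell_3$, is tilted upward away from the origin, and inside $W_z\cap\mathbb{S}^2$ its height stays at most $h$. Alternatively, a truncation argument as in the proof of Lemma~\ref{l:CHull} applies: replace the part of $M$ above $\{z=h\}$ by its vertical projection onto $\{z=h\}$. This is area non-increasing and keeps the free boundary on $\mathbb{S}^2$ after radial adjustment. The first route is cleaner, so I will derive $z<h$ from (1).
\item For (1), the plane $P_{O,p_3,p_4}$ contains $\ell_3$. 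One checks that $p_1$ and $p_2$ lie below it, since both have $z=0$ and the plane meets the line $\{x=0,z=0\}$ only at $O$. Lemma~\ref{l:CHull} then gives $M$ below the plane.
\item For (2), the plane $P_{O,p_1,p_4}$ contains $\ell_1$, and $p_2$ lies in it too. It remains to check that $p_3$ and the segment $\ell_3$ lie above it, which is a sign computation on a normal vector. Lemma~\ref{l:CHull} then gives $M$ above the plane.
\end{itemize}

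Item (3) is the main obstacle, because the vertical plane $P_{\ell_3,z}$ does not contain the origin. I would argue with the standard convex hull property for minimal surfaces together with the free boundary condition. The plane $P_{\ell_3,z}$ is tangent to the cylinder over the circle $\mathbb{S}^2\cap\{z=h\}$. Since $M\subset\{0\le z\le h\}$ and the horizontal circles there have radius at least $\sqrt{1-h^2}$, the whole unit ball slab $\mathbb{B}\cap\{0\le z\le h\}$ lies on the origin side of $P_{\ell_3,z}$. Hence $\ell_4$ lies on that side as well.

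The segments $\ell_1,\ell_2$ also lie on the origin side, by a direct check that $p_2$ lies inside the tangent line, i.e. the line through $p_3$ and $p_4$ projected to $\{z=0\}$. So $\partial M$ lies in the closed origin-side half-space. Then, by the convex hull property or the maximum principle against vertical planes parallel to $P_{\ell_3,z}$, the surface $M$ lies on that side, and strictly so in its interior.

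The delicate points are the strict inequalities and the sign checks. The sign checks depend on $h<\cos\frac\pi m$, which is equivalent to $a>0$.
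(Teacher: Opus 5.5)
The wedge $W_z$, the bound $z>0$, and items (1) and (2) are handled correctly and as in the paper. You use direct sign checks where the paper uses plane-rotation pictures. Deriving $z<h$ from item (1) is also the paper's route.

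State that step precisely, though: $P_{O,p_3,p_4}\cap\mathbb{S}^2$ is the great circle whose highest point is $p_4$, so the closed hemisphere of $\mathbb{S}^2$ below it lies in $\{z\le h\}$, with equality only at $p_4$. Also, it is the vertical planes through $\ell_1$ and $\ell_3$ that meet at angle $\pi/m$, not those through $\ell_1$ and $O,p_4$. Your opening angle $\frac{m-2}{2m}\pi$ is nevertheless right.

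The genuine gap is item (3). The claim that $\mathbb{B}\cap\{0\le z\le h\}$ lies on the origin side of $P_{\ell_3,z}$ is false, and your inequality points the wrong way. $P_{\ell_3,z}$ is at distance $\sqrt{1-h^2}$ from the $z$-axis, whereas the circles $\mathbb{S}^2\cap\{z=t\}$ with $0\le t<h$ have radius $\sqrt{1-t^2}>\sqrt{1-h^2}$, so they cross the plane.

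The remaining a priori information does not rescue the argument. There are points of $\mathbb{S}^2\cap W_z\cap\{0<z<h\}$ that lie below $P_{O,p_3,p_4}$ and above $P_{O,p_1,p_4}$ but strictly beyond $P_{\ell_3,z}$. So you cannot first locate $\ell_4$ on the origin side and then invoke the convex hull property. That location of $\ell_4$ is a consequence of (3), not an input to it.

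The missing ingredient is the free-boundary condition, which you mention but never use. The paper argues as follows.
\begin{enumerate}
\item $\Gamma$ lies in the closed half-space on the origin side; in fact $\ell_2,\ell_3\subset P_{\ell_3,z}$.
\item Let $\vec n$ be the unit normal of $P_{\ell_3,z}$ pointing away from the origin. If $M$ had points beyond the plane, the harmonic function $\langle\cdot,\vec n\rangle$ would attain its maximum over $\overline M$, a value larger than $\sqrt{1-h^2}$. By the maximum principle this happens at a point $p$ in the interior of $\ell_4$.
\item Along $\ell_4$ the inward conormal of $M$ is the position vector, so at $p$ it equals $p$ and $\langle p,\vec n\rangle>\sqrt{1-h^2}>0$.
\item Moving from $p$ into $M$ therefore increases $\langle\cdot,\vec n\rangle$, which contradicts maximality.
\end{enumerate}
This replaces your slab argument and completes item (3).
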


Most of the planes mentioned in Lemma \ref{l:ConvexHull} are shown in~\cref{fig:planes}.

\begin{figure}[htb!]
\centering 
\includegraphics[width=\textwidth]{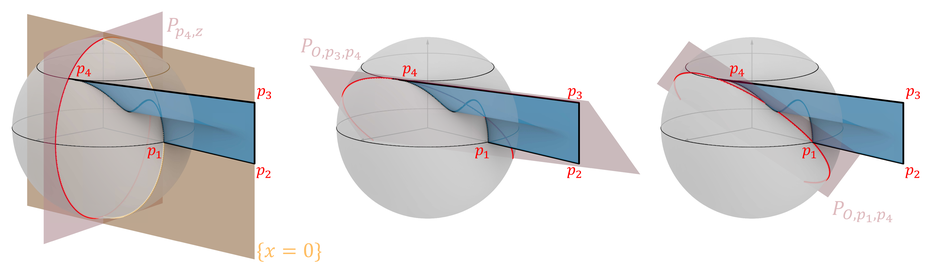} 
\caption{Perspective view of planes delimiting the surface. }
\label{fig:planes}
\end{figure}

\begin{proof}
  First, we observe that $\Gamma$ is contained in the closure of
  $W_z\cap\{z>0\}$. Then Lemma~\ref{l:CHull} applied to $\{z=0\}$,
   $\{x=0\}$ and $P_{p_4,z}$, ensures that the interior of both
  $M$ and $\ell_4$ are contained in $W_z\cap\{z>0\}$.

  Let us now call $\gamma$ the geodesic of $\mathbb{S}^2$ passing
  through $p_4$ which is tangent to the circle $\mathbb{S}^2\cap
  \{z=h\}$. Let us denote by $P_\gamma$ the plane containing
  $\gamma$. As $\gamma$ is symmetric with respect to $P_{p_4,z}$, then
  $\gamma\cap\{z=0\}$ is contained in $P_{p_4,z}^{\perp}$, the
  vertical plane orthogonal to $P_{p_4,z}$ and passing through the
  origin. The plane $P_{p_4,z}^{\perp}$ leaves $\Gamma$ on one
  side. By rotating this plane around the horizontal line connecting
  the two points of $\gamma\cap\{z=0\}$ until arriving at $P_\gamma$
  we observe that $P_\gamma$ leaves $\Gamma$ on one side (this
  operation is illustrated in~\cref{fig:lemme4_8_rotate1}). By
  Lemma~\ref{l:CHull}, we get that both $\ell_4$ and $M$ lie on the
  same side of $P_\gamma$ as~$\Gamma$ (\textit{i.e.} below). In
  particular, $\ell_4$ lies below $ \{z=h\}$. Thus $\partial
  M\subset\{z\leq h\}$, and $M$ is contained in $W_z\cap\{0<z<h\}$.
  
\begin{figure}[htb!]
    \centering
    \includegraphics[width=0.75\linewidth]{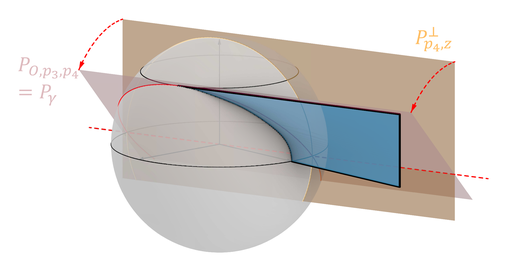}
    \caption{Rotation of the plane $P_{p_4,z}^{\perp}$ around the axis
      connecting the points of $\gamma \cap \{z=0\}$ (dashed red line)
      to prove that the minimal surface lies below the plane
      $P_{\gamma}$. }
    \label{fig:lemme4_8_rotate1}
\end{figure}

Since $\ell_3\subset$ $P_{O,p_3,p_4}\cap\{z=h\}$ is tangent to the
sphere, we get that $P_{O,p_3,p_4}\cup\mathbb{S}^2$ is nothing but the great
circle $\gamma$. Thus $P_{O,p_3,p_4}$ = $P_\gamma$, and the first item
has already been proven.

  If we rotate the plane $\{z=0\}$ about the $y$-axis until we reach
  the plane $P_{O,p_1,p_4}$, we observe that $\Gamma$ lies above that
  plane (this operation is illustrated in
 ~\cref{fig:lemme4_8_rotate2}). We then conclude the second item
  by using Lemma~\ref{l:CHull} in this situation.

  \begin{figure}[htb!]
    \centering
    \includegraphics[width=0.75\linewidth]{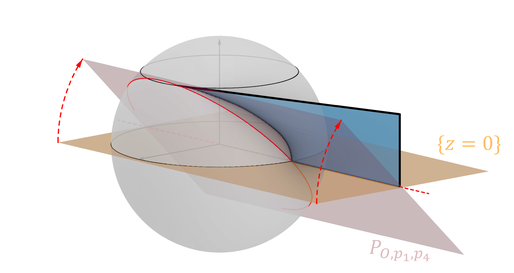}
    \caption{Rotation of the plane $\{z=0\}$ around the $y-$axis
      (dashed red line) to prove that the minimal surface lies above
      the plane $P_{O,p_1,p_4}$.}
    \label{fig:lemme4_8_rotate2}
\end{figure}

Finally, we observe that $\Gamma$ lies on one side of the vertical
plane $P_{\ell_3,z}$ (but this plane does not contain the origin). Let
us suppose there exist points of $M$ on the other side of
$P_{\ell_3,z}$. By the maximum principle, the farthest point $p$ in
$\overline M$ from $P_{\ell_3,z}$ on this side must be contained in
$\ell_4$. Denote by $\overrightarrow{n}$ the normal vector to
$P_{\ell_3,z}$ pointing to the side containing $p$, see
~\cref{fig:lemme4_8}. As the interior conormal
$\overrightarrow{n_p}$ of $M$ at $p$ is pointing outside the sphere
and $M\subset W_z$, we get that $\langle
\overrightarrow{n_p},\overrightarrow{n} \rangle > 0$ and then there
must be points in the interior of $M$ that are farther from
$P_{\ell_3,z}$ than $p$, a contradiction (see
~\cref{fig:lemme4_8}). This concludes the proof of
Lemma~\ref{l:ConvexHull}.
  \end{proof}

  \begin{figure}[htb!]
      \centering
      \includegraphics[width=0.7\linewidth]{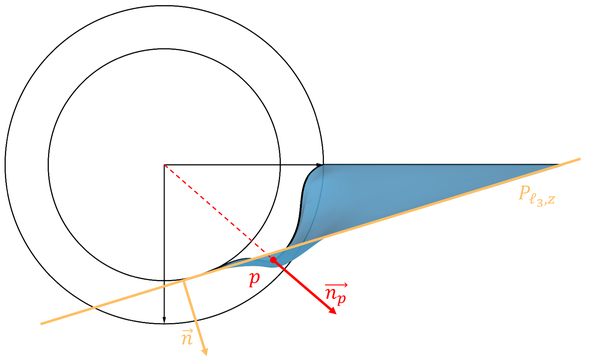}
      \caption{If there are points of $M$ on the other side of the vertical plane $P_{\ell_3,z}$, these points must be on the boundary. But as the surface is orthogonal to the sphere, there are interior points farther away from $P_{\ell_3,z}$, which is a contradiction.}
      \label{fig:lemme4_8}
  \end{figure}

  \begin{remark}\label{r:cusp}
   At any point of $\ell_4$, the position vector and $\ell_4'$
   (tangent to the sphere) form an orthogonal basis of the tangent
   plane of $M$. By continuity the same happens at $p_4$, as
   illustrated in~\cref{fig:rk4_15}.  
  On the other hand, $\ell_3'$ is contained in $T_{p_4} M$ and
  $\ell_3$ is tangent to the sphere, from where we deduce that
  $\ell_4$ arrives tangentially to $\ell_3$ at $p_4$ and $T_{p_4}
  M=P_{O,p_3,p_4}$ (it contains $\ell_3$).
\end{remark}

\begin{figure}[htb!]
    \centering
    \includegraphics[width=0.6\linewidth]{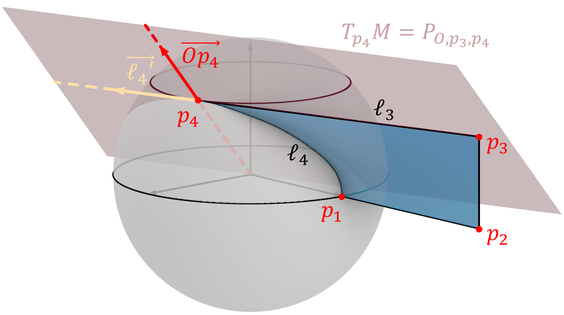}
    \caption{$\ell_4$ arrives tangentially to $\ell_3$ at $p_4$, because the tangent vector of $\ell_4$ at $p_4$ is tangent to $\mathbb{S}^2$ and must lie in the tangent plane $T_{p_4}M$ that contains $\ell_3$. }
    \label{fig:rk4_15}
\end{figure}

\begin{remark}\label{T}
  Since $M$ lies below $P_{O,p_3,p_4}$, the vertical projection
  $\pi(\ell_4)$ of $\ell_4$ over $\{z=0\}$ lies outside the
  simply-connected component bounded by the projection of the geodesic
  $\gamma$ (we are using the notation in the proof above).  On the
  other hand, the second item of Lemma~\ref{l:ConvexHull} says that
  $\pi(\ell_4)$ lies in the simply connected component bounded by the
  projection of the great circle passing through $p_1$ and $p_4$, so
  $\ell_4$ has little room to move (see~\cref{fig:l4}).
\end{remark}

\begin{figure}[htb!]
    \centering
    \includegraphics[width=\textwidth]{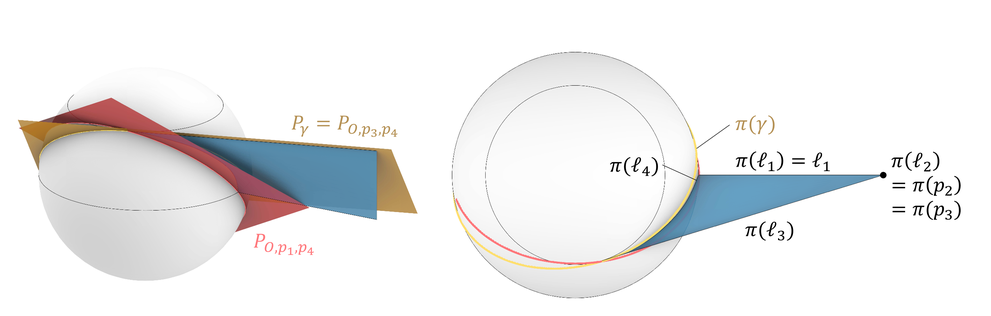}
    \caption{Region of $\{z=0\}$ where $\ell_4$ projects.}
    \label{fig:l4}
  \end{figure}

  \begin{lemma}\label{l:rotation}
  The minimal disk $M$ is a rotational graph, in the sense that it
  does not intersect any of its rotated copies about the $z$-axis, and the boundary curve $\ell_4$ has monotone third coordinate.
\end{lemma}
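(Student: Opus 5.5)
Throughout, we identify $M$ with the fundamental piece produced above, whose boundary is $\Gamma\cup\ell_4$ with $\Gamma=\ell_1\cup\ell_2\cup\ell_3$, lying in $W_z\cap\{0<z<h\}$ by Lemma~\ref{l:ConvexHull}.

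The plan is to show that $M$ is transverse to the rotational Killing field $X(p)=(-p_2,p_1,0)$. That is, the Jacobi function $u=\langle N,X\rangle$ should have no zeros in the interior of $M$. We then integrate the flow: rotated copies $R_\theta M$ for $\theta\in(0,\frac{\pi}{m}]$ start disjoint from $M$ (they lie in a disjoint wedge once $\theta$ exceeds the wedge angle of $W_z$), and we decrease $\theta$. A first contact would be either an interior tangency, excluded by the maximum principle, or a boundary contact. For the boundary case, notice that $R_\theta\ell_4$ stays in $\mathbb{S}^2$ and meets the sphere orthogonally, just like $\ell_4$. So a contact between $\ell_4$ and $R_\theta\ell_4$ is a tangency of two free-boundary surfaces, and the boundary maximum principle for free-boundary problems applies. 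Contacts involving the segments $\ell_1,\ell_2,\ell_3$ need to be ruled out geometrically. The segment $\ell_1$ is radial on the $y$-axis in $\{x=0\}$, and the wedge bound keeps the rotated copy $R_\theta M$ on one side of $R_\theta\{x=0\}$. The segment $\ell_2$ is vertical at horizontal distance $1+a$ in $\{x=0\}$. The segment $\ell_3$ is horizontal at height $h$, and item 1 of Lemma~\ref{l:ConvexHull} keeps $M$ below $P_{O,p_3,p_4}$. I would check these cases using the same wedge and plane bounds.

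I would actually argue through the equivalent statement: $M$ is a radial graph over a domain on a cylinder, or equivalently each horizontal circle about the $z$-axis meets $M$ at most once. This lets me reuse the machinery of Proposition~\ref{p:graph}. Suppose $u$ vanishes at an interior point $p$. The nodal set of $u$, a Jacobi field, near $p$ consists of at least two curves crossing. I would extend these nodal arcs to $\partial M$, where they cannot close up: on a loop the region bounded would be a domain where $u$ vanishes on the boundary, contradicting stability. This stability follows because $M$ minimizes area (Theorem~\ref{th:semifree}) and the vertical translations give a positive Jacobi field relative to the sphere configuration. The nodal arcs therefore cut $M$ into at least four components.

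Next, compute the sign of $u$ on each boundary piece.
\begin{itemize}
\item On $\ell_1$, $X$ is orthogonal to the segment and to $N$ in a definite way, and $u$ has a fixed sign.
\item On $\ell_2$, $X=(-(1+a),0,0)$ is horizontal normal to $\{x=0\}$, and $u$ is again signed.
\item On $\ell_3$, $u$ is signed similarly.
\item On $\ell_4$, $N$ is orthogonal to the position vector and to $\ell_4'$. Hence $u=\langle N,X\rangle$ vanishes exactly where $\ell_4'$ is parallel to $X$, i.e.\ where $\ell_4$ is tangent to a horizontal circle, i.e.\ where $z|_{\ell_4}$ has a critical point.
\end{itemize}
So I would first prove the monotonicity of $z$ along $\ell_4$, and then deduce the sign of $u$. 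Some nodal component must then have boundary only on $\ell_4$ where $u$ has one sign, and this contradicts the count.

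The main obstacle is the monotonicity of the third coordinate along $\ell_4$. My plan is to use Proposition~\ref{p:kg}, which says the spherical geodesic curvature of $\ell_4$ never vanishes, so $\ell_4$ is strictly convex on $\mathbb{S}^2$. Its endpoints are $\widehat{p_1}=p_1$ at height $0$, where $\ell_4$ arrives orthogonally since $M$ contains $\ell_1$, and $p_4$ at height $h$, where $\ell_4$ is tangent to the horizontal circle by Remark~\ref{r:cusp}. A strictly convex spherical curve tangent to the latitude circle at its top endpoint $p_4$ can have no other critical point of $z$ without an inflection, using the confinement of Remark~\ref{T}, below $P_\gamma$ and above $P_{O,p_1,p_4}$. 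To make this precise I would take a horizontal plane $\{z=c\}$ meeting $\ell_4$ twice and apply a plane-cutting argument in the style of Lemma~\ref{l:radial} to a plane through the $z$-axis direction, deriving a component bounded by $\ell_4\cup P$ and contradicting Lemma~\ref{l:P}.
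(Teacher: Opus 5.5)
\textbf{There is a genuine gap.} Your route makes the monotonicity of the height along $\ell_4$ its foundation: the sign of $u=\langle N,X\rangle$ on $\ell_4$ is read off from it. That step is not established, for three reasons.
\begin{enumerate}
\item Proposition~\ref{p:kg} does not apply as stated. It requires the interior of $\Gamma$ to lie strictly on one side of the plane through its endpoints and the origin, here $P_{O,p_1,p_4}$. But that plane contains the $y$-axis, hence all of $\ell_1$. The paper invokes that proposition only for the $k$-noid.
\item Even granting that $\ell_4$ has nonvanishing geodesic curvature in $\mathbb{S}^2$, convexity does not exclude interior critical points of $z$. A circle of $\mathbb{S}^2$ cut by a tilted plane is strictly convex and has two. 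At an interior local minimum of $z$ the curve lies on the polar side of the horizontal circle it touches, so its curvature vector there must have a component toward the pole. You would need to show that the curvature of $\ell_4$ points the other way, and your sketch does not do this.
\item Your ``precise'' version cannot work. Lemma~\ref{l:P} needs a plane through the origin, which $\{z=c\}$ is not. A vertical plane through the $z$-axis never contains two distinct points of $\ell_4$ at the same height; by Proposition~\ref{p:z} it meets $\ell_4$ only once.
\end{enumerate}
Two points of $\ell_4$ at equal height differ by a rotation about the $z$-axis. Rotations are exactly what detects them, so the logic has to run the other way.

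The Jacobi-field step has further problems, and the sweep in your first paragraph already does the job. A zero of $u$ is a crossing of nodal curves only where $\nabla u$ vanishes, so the ``four components'' count is unfounded. The argument also needs the signs of $u$ on $\ell_1,\dots,\ell_4$ to agree, which you never check; your reason on $\ell_1$, that $X$ is orthogonal to $N$, would give $u\equiv 0$. Finally, $u>0$ inside $M$ is an infinitesimal statement and does not by itself give $M\cap R_\theta M=\emptyset$. What gives that is the sweep, which is exactly the paper's proof and needs neither transversality nor monotonicity. Start from $\theta\geq\frac{m-2}{2m}\pi$, the angle of $W_z$; note $\frac{\pi}{m}$ is too small when $m\geq 5$. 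Decrease $\theta$ to the first contact and exclude each case:
\begin{itemize}
\item interior against interior, by the maximum principle;
\item $\ell_4$ against $R_\theta\ell_4$, by the boundary maximum principle for free-boundary surfaces;
\item $\ell_4$ against interior points of $M_\theta$: $M_\theta$ lies outside $\mathbb{B}$, so it would be tangent to $\mathbb{S}^2$ there, while $M$ meets $\mathbb{S}^2$ orthogonally;
\item $\ell_1$, $\ell_3$ and their rotated copies against interior points, because interiors lie in $\{0<z<h\}$; the wedge does not separate $\ell_1$ from $M_\theta$ for small $\theta$;
\item $R_\theta\ell_2$ against $M$, because $M\subset\{x>0\}$;
\item $\ell_2$ against $M_\theta$, by item~3 of Lemma~\ref{l:ConvexHull}: $R_{-\theta}\ell_2$ lies at distance $1+a$ from the $z$-axis with polar angle in $(\frac{\pi}{m},\frac{\pi}{2})$, hence beyond $P_{\ell_3,z}$.
\end{itemize}
Monotonicity then follows at once. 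If distinct $q,q'\in\ell_4$ had the same height, then $q'=R_\theta q$ for some $0<|\theta|<\frac{m-2}{2m}\pi$, so $M\cap M_\theta\neq\emptyset$. Thus $z$ is injective on $\ell_4$, hence monotone.
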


\begin{proof}
  Let us call $M_\theta$ the surface obtained by rotating $M$ by a
  (positive) angle $\theta$ about the $z$-axis. Since $M$ is contained
  in the wedge region $W_z$ with angle $\frac{m-2}{2m}\pi$ at the
  $z$-axis, if $\theta\geq \frac{m-2}{2m}\pi$ then $M_\theta$ does not intersect $M$. Decrease $\theta$ until $M_\theta$ intersects $M$ for the first time. Suppose $\theta>0$ and denote by $p$ a point where both surfaces meet.
By the maximum principle, $p$ cannot be interior to both $M$ and $M_\theta$.

  By the maximum principle at the boundary, $p$ cannot lie on both boundary curves in $\mathbb{S}^2$ (where the surfaces are free-boundary).  Moreover $M,M_\theta\subset\R^3-\B$, thus $p$
  cannot lie on $\ell_4$ and $M_\theta$ (nor on $M$ and the rotated copy of $\ell_4$).

  We now use Lemma~\ref{l:ConvexHull}: Since
  $M,M_\theta\subset\{0<z<h\}$, $p$ cannot lie on $\ell_1,\ell_3$ nor
  its rotated copies.  Since $M$ is contained on one side of the
  vertical plane $\{x=0\}$ and the rotated curve $\ell_2$ is contained
  on the other side, the only possibility left is $p$ being
  interior to $M_\theta$ and contained in $\ell_2$.  We now rule out
  this possibility: Since $M$ is contained on one
  side of the vertical plane $P_{\ell_3,z}$ containing $\ell_3$, its rotated copy cannot intersect $\ell_2$.

  We then reach a contradiction, so it must be $\theta=0$, and the
  lemma follows.
\end{proof}

  \begin{corollary}
    The solution $M$ to the partially-free boundary problem for the
    configuration $\langle\Gamma,\mathbb{S}^2\rangle$ (with $\Gamma$
    defined as above) is unique.
\end{corollary}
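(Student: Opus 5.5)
The uniqueness will come from the same rotation argument used in Lemma~\ref{l:rotation}, now applied to two possibly different solutions instead of one solution and its own rotated copy. Let $M$ and $\widetilde M$ be two solutions of the partially-free boundary problem for $\langle\Gamma,\mathbb{S}^2\rangle$, with free boundary curves $\ell_4$ and $\widetilde\ell_4$ on the sphere. Nothing in Lemma~\ref{l:ConvexHull} or Remark~\ref{r:cusp} used minimality of $M$ beyond what holds for any solution, since Lemmas~\ref{l:CHull} and~\ref{l:P} rely only on $M$ being area minimizing among such surfaces. Therefore both $M$ and $\widetilde M$ lie in $W_z\cap\{0<z<h\}$, both lie on one side of $P_{\ell_3,z}$, both lie below $P_{O,p_3,p_4}$ and above $P_{O,p_1,p_4}$, and both have interior conormal equal to the position vector along their free boundaries.

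For $\theta>0$, let $\widetilde M_\theta$ denote $\widetilde M$ rotated by angle $\theta$ about the $z$-axis. For $\theta\ge\frac{m-2}{2m}\pi$ the wedge property gives $\widetilde M_\theta\cap M=\emptyset$. Decrease $\theta$ to the first contact value $\theta_0$ and suppose $\theta_0>0$. At a contact point $p$, the case analysis of Lemma~\ref{l:rotation} applies word for word:
\begin{itemize}
\item[(i)] $p$ is not interior to both surfaces, by the interior maximum principle;
\item[(ii)] $p$ is not on both free boundaries, by the boundary maximum principle together with orthogonality to $\mathbb{S}^2$;
\item[(iii)] $p$ is not on a free boundary of one surface while lying on the other, since both surfaces lie in $\R^3-\B$;
\item[(iv)] $p$ is not on $\ell_1$, $\ell_3$ or their rotated copies, since both surfaces lie in $\{0<z<h\}$;
\item[(v)] $p$ is not on $\ell_2$ or its rotated copy, using the side conditions with respect to $\{x=0\}$ and $P_{\ell_3,z}$.
\end{itemize}
Hence $\theta_0=0$, that is, $\widetilde M_\theta\cap M=\emptyset$ for all $\theta>0$. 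Running the same argument with the rotation in the opposite sense, or equivalently with the roles of $M$ and $\widetilde M$ exchanged, gives $M_\theta\cap\widetilde M=\emptyset$ for all $\theta>0$.

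Both surfaces share the fixed boundary $\Gamma$. Being disjoint from all nontrivial rotations of each other in both directions, they are each rotational graphs over the same angular region, with $M$ lying both ``before'' and ``after'' $\widetilde M$ in the angular direction. The plan is to conclude that $M$ and $\widetilde M$ coincide. The cleanest way is to compare them at $\theta=0$: if $M\neq\widetilde M$, then near $\ell_2$ or $\ell_1$, where they share a boundary segment and both are graphs in the rotational direction, one of them lies strictly on one side of the other. The boundary maximum principle along the common segment, or the interior maximum principle at a tangency obtained in the limit $\theta\to0$, then forces them to agree locally, and analytic continuation gives $M=\widetilde M$.

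\textbf{Main obstacle.} The delicate step is exactly this passage from ``no contact for $\theta\ne0$ in either direction'' to equality. It requires that the limit configuration at $\theta=0$ produces either a one-sided touching along the shared boundary $\Gamma$ or a contact point. Alternatively, if the one-sided comparison between $M$ and $\widetilde M$ is messy, I would argue by area: both are area minimizers with the same value, so a cut-and-paste of the region between them would produce a non-smooth minimizer, which is a contradiction.
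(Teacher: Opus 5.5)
Your proposal follows the paper's proof. You rotate one solution out of the wedge $W_z$, turn it back to the first contact, and exclude every kind of contact for $\theta>0$ by the maximum principle and Lemma~\ref{l:ConvexHull}. As you correctly note, that lemma holds for any area-minimizing solution. You then exchange the roles of $M$ and $\widetilde M$. Your case list (i)--(v) is in fact more explicit than the paper's, which only mentions contact along the free boundaries.

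Where you go astray is the step you call the main obstacle, which needs no further maximum principle.

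\begin{itemize}
\item[(1)] By Lemma~\ref{l:rotation} applied to each surface, $M$ and $\widetilde M$ are graphs $\phi=f(r,z)$ and $\phi=\tilde f(r,z)$ of the angular coordinate over their projections to the meridian half-plane $\{(r,z)\}$.
\item[(2)] These two projections are the same region $D$. It is bounded by the projection of $\Gamma$, namely the segments $[1,1+a]\times\{0\}$, $\{1+a\}\times[0,h]$ and $[\sqrt{1-h^2},1+a]\times\{h\}$, together with the arc $\{r^2+z^2=1,\ 0\le z\le h\}$, onto which both $\ell_4$ and $\widetilde\ell_4$ project. For the interiors this follows from injectivity of the projection and invariance of domain.
\item[(3)] Hence $\widetilde M_\theta\cap M=\emptyset$ for all $\theta>0$ says precisely that $\tilde f\le f$ on $D$ (or $\tilde f\ge f$, depending on the orientation of $\phi$).
\item[(4)] The exchanged statement gives the opposite inequality, so $f=\tilde f$, i.e.\ $M=\widetilde M$.
\end{itemize}

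This is exactly the content of the paper's sentence ``$M$ lies on one side of $\tilde M$ \dots\ and on the other side, so they must coincide.''

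The route you sketch instead would not work as stated:
\begin{itemize}
\item[(a)] At $\theta=0$ the two surfaces are only known to meet along the common boundary $\Gamma$, where they need not be tangent. So Hopf's boundary lemma cannot be invoked.
\item[(b)] Nothing in the limit $\theta\to 0$ produces an interior tangency either.
\item[(c)] A cut-and-paste area argument yields nothing for two surfaces that are ordered and do not cross.
\end{itemize}
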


\begin{proof}
  Let us suppose there exist two such solutions $M$ and $\tilde M$. By
  Lemma~\ref{l:rotation}, they are both rotational graphs about the $z$-axis. 

  Let us rotate $M$ about the $z$-axis until it does not touch
  $\tilde M$ (it suffices to rotate by an angle $\frac{m-2}{2m}\pi$) and
  start rotating it back until it touches $\tilde M$ for the first
  time. By the maximum principle, either we arrive at its original
  position, or the rotated surface $M$ and $\tilde M$ intersect
  along their boundary curves on $\mathbb{S}^2$. In the latter case we
  reach a contradiction with the maximum principle at the
  boundary. Thus $M$ lies on one side of $\tilde M$.

  A symmetric argument shows that $M$ lies on the other side of $\tilde M$, so
  they must coincide.
\end{proof}

\begin{proposition}\label{p:z}
  The minimal disk $M$ is a vertical graph over a domain
  $\Omega\subset \{z=0\}$ and the vertical projection of $\ell_4$ is a
  radial graph from the origin.
\end{proposition}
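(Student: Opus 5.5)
The plan is to apply Proposition~\ref{p:graph} with $\Pi=\{z=0\}$, and then Lemma~\ref{l:radial}, checking hypotheses (a)--(d) for $\Gamma=\ell_1\cup\ell_2\cup\ell_3$ and the wedge of Lemma~\ref{l:ConvexHull}.

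\textbf{Hypothesis (a).} The planes orthogonal to $\Pi$ are vertical planes. The vertical projection of $\Gamma$ is the segment $\pi(\ell_1)\cup\pi(\ell_2)$ on the $y$-axis from $(0,1)$ to $(0,1+a)$ (note that $\ell_2$ projects to the point $(0,1+a)$), followed by the horizontal segment $\pi(\ell_3)$ from $(0,1+a)$ to $\pi(p_4)$. Together with the segment joining $\pi(p_4)$ to $(0,1)$, this projection bounds a triangle, which is convex. So Remark~\ref{r:convex} gives (a). Here the vertical segment $\ell_2$ is harmless, since it is orthogonal to $\Pi$.

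\textbf{Hypothesis (b).} Take $W=W_z$, the wedge bounded by the half-plane $\{x=0,y>0\}$ and the half-plane of $P_{p_4,z}$ through $p_4$. Its edge $L_W$ is the $z$-axis. By Lemma~\ref{l:ConvexHull}, $M$ lies in $\overline{W_z}$. The endpoint $p_1$ of $\ell_4$ lies in $\{x=0\}$ and the endpoint $p_4$ lies in $P_{p_4,z}$. The interior angle $\frac{m-2}{2m}\pi<\pi$, so the wedge is convex.

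\textbf{Hypothesis (c).} Let $P_0$ be the vertical plane through $p_1$ and $p_4$. Its trace on $\Pi$ is the chord of the unit circle joining $(0,1)$ and $\pi(p_4)$, where $|\pi(p_4)|=\sqrt{1-h^2}<1$. The origin lies on one side of this chord. The projected triangle above lies on the other, closed side, since the chord is one of its edges and the origin is not in the triangle. Moreover $\Gamma\not\subset P_0$.

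\textbf{Hypothesis (d).} Discarding $\ell_2$, the projections of $\ell_1$, $\ell_3$ and $\ell_4$ are injective. For $\ell_4$ this follows from Lemma~\ref{l:rotation}, or from Lemma~\ref{l:CHull}, since $\ell_4$ lies in a hemisphere. They form a Jordan curve because $\pi(\ell_4)$ stays in the thin region of Remark~\ref{T}. That region lies strictly inside the open disk of radius $\sqrt{1-h^2}$ away from the endpoints, and inside the wedge, so it avoids $\pi(\ell_1)$ and $\pi(\ell_3)$ except at the endpoints.

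This step (d) is the main obstacle. The delicate points are near $p_4$, where $\ell_4$ is tangent to $\ell_3$ by Remark~\ref{r:cusp}, and near $p_1$. I would argue using $\ell_4\subset\{0<z<h\}$: then $|\pi(q)|>\sqrt{1-h^2}$... this needs care, because it actually places $\pi(\ell_4)$ \emph{outside} the circle of radius $\sqrt{1-h^2}$. That circle is tangent to $\pi(\ell_3)$ at $\pi(p_4)$, and $\pi(\ell_3)$ lies outside it. So I would instead use Lemma~\ref{l:ConvexHull}(1),(2) and Remark~\ref{T} to confine $\pi(\ell_4)$ between the projections of the two great circles. The projection of $P_\gamma\cap\mathbb{S}^2$ is an ellipse tangent to $\pi(\ell_3)$ at $\pi(p_4)$, and points of $\ell_4$ below $P_\gamma$ project outside it. One must then check that this region meets $\pi(\ell_3)$ only at $\pi(p_4)$. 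That follows since $\ell_3$ lies in $P_\gamma$ at height $h$, while $\ell_4$ has $z<h$.

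\textbf{Conclusion.} Proposition~\ref{p:graph} then gives that $M$ is a vertical graph over $\Omega$. Finally, $\partial M$ meets the $z$-axis $L_W$ nowhere: $\Gamma$ lies at distance at least $1$ from it, and $\ell_4$ lies in $\{0<z<h\}$ with $h<1$, hence off the poles. So Lemma~\ref{l:radial} shows that $\pi(\ell_4)$ is a radial graph from the origin.
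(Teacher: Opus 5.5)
Your overall route is the paper's: Proposition~\ref{p:graph} with $\Pi=\{z=0\}$ and $W=W_z$, Remark~\ref{r:convex} for (a), the vertical plane through $p_1,p_4$ for (c), then Lemma~\ref{l:radial}. The gap is in (d), exactly where you flag it, and your fallback argument does not work.

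Write $r=\sqrt{1-h^2}$ and $u=(\cos\frac\pi m,\sin\frac\pi m,0)$. Let $e$ be the horizontal unit vector along $\ell_3$ pointing from $p_4$ to $p_3$, so that $\pi(\ell_3)=\{ru+te:\ 0\le t\le r\cot\frac\pi m\}$. The line of $\pi(\ell_3)$ is tangent to the ellipse $\pi(\gamma)$ at $\pi(p_4)$. Hence every other point of $\pi(\ell_3)$ lies \emph{outside} that ellipse, and ``outside the ellipse'' excludes nothing.

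Concretely, take small $\beta>0$ and set $q_\beta=ru+\beta e+\sqrt{h^2-\beta^2}\,e_3$. These points:
\begin{itemize}
\item lie on $\mathbb{S}^2$, with height in $(0,h)$, and inside $W_z$;
\item lie strictly below $P_\gamma$, since $\langle q_\beta, re_3-hu\rangle=r(\sqrt{h^2-\beta^2}-h)<0$;
\item lie strictly above $P_{O,p_1,p_4}$.
\end{itemize}
Yet $\pi(q_\beta)=ru+\beta e\in\pi(\ell_3)$. So neither Remark~\ref{T}, nor items (1)--(2) of Lemma~\ref{l:ConvexHull}, nor the bound $z<h$ can stop $\pi(\ell_4)$ from touching $\pi(\ell_3)$. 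Your sentence ``$\ell_3$ lies in $P_\gamma$ at height $h$, while $\ell_4$ has $z<h$'' only shows that $\ell_3\cap\ell_4=\{p_4\}$ in space. It says nothing about their projections.

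The points $q_\beta$ are exactly the points of the circle $\mathbb{S}^2\cap P_{\ell_3,z}$. What rules them out is item (3) of Lemma~\ref{l:ConvexHull}, which is what the paper uses: $M$ lies on the origin side of the vertical plane $P_{\ell_3,z}=\{\langle X,u\rangle=r\}$. You also need strictness at interior points of $\ell_4$, since closed one-sidedness still allows the $q_\beta$. Suppose $q\in\ell_4$ lay on $P_{\ell_3,z}$. The inner conormal of $M$ at $q$ is the position vector $q$, and $\langle q,u\rangle=r>0$. So $M$ would have points on the far side of $P_{\ell_3,z}$, a contradiction. Combine this with $\mathrm{int}(\ell_4)\subset W_z\subset\{x>0\}$. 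Then $\pi(\ell_4)$ meets $\pi(\ell_1)\cup\pi(\ell_3)$ only at $\pi(p_1)$ and $\pi(p_4)$, which is the Jordan-curve part of (d).

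A few minor points:
\begin{itemize}
\item $\ell_3$ is at distance $\ge r$ from the $z$-axis, not $\ge 1$. Positivity is all Lemma~\ref{l:radial} needs, so this is harmless.
\item In (c), ``the origin is not in the triangle'' is not the right reason. Check directly that the origin and $(0,1+a)$ lie strictly on opposite sides of the line through $(0,1)$ and $\pi(p_4)$; this holds since $a>0$ and $\cos\frac\pi m>0$.
\item Injectivity of $\pi|_{\ell_4}$ comes from $\ell_4$ lying in the upper hemisphere, not from Lemma~\ref{l:rotation}.
\end{itemize}
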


\begin{proof}
  It suffices to check that the hypotheses of
  Proposition~\ref{p:graph} and Lemma~\ref{l:radial} are satisfied for $\Pi=\{z=0\}$, as illustrated in
 ~\cref{fig:graph_config2}. By construction, $\Gamma$ is the
  union of three straight segments with independent directions and the
  vertical plane passing through $p_1$ and $p_4$ separates $\Gamma$
  from the origin. Then condition $(c)$ is satisfied. By
  Remark~\ref{r:convex}, condition $(a)$ holds. We get $(b)$ from
 Lemma~\ref{l:ConvexHull} (we take $W=W_z$).

  We finish by checking that $(d)$ is also
  satisfied:  From Lemma~\ref{l:ConvexHull}, we get that the
  (embedded) boundary curve $\ell_4$ lies in the North hemisphere of
  $\s^2$, that projects graphically on $\{z=0\}$. Hence $\ell_4$
  projects one-to-one on the curve $\pi(\ell_4)\subset W_z\cap \{z=0\}$ with
  endpoints $\pi(p_1)=p_1$ and $\pi(p_4)$.  Since $\ell_2$ projects
  onto $\pi(p_2) = \pi(p_3)$ and $M$ lies on one side of $\{x=0\}$ and
  on one side of the vertical plane $P_{\ell_3,z}$, we deduce that the
  curve defined by $\pi(\ell_1) \cup \pi(\ell_3) \cup \pi(\ell_4)$ is
  a Jordan curve in $\{z=0\}$ which bounds a compact domain $\Omega$,
  see~\cref{fig:omega_proj}. This is the domain over which $M$
  projects graphically.
  \end{proof}

\begin{figure}[htb!]
    \centering
    \includegraphics[width=0.6\linewidth]{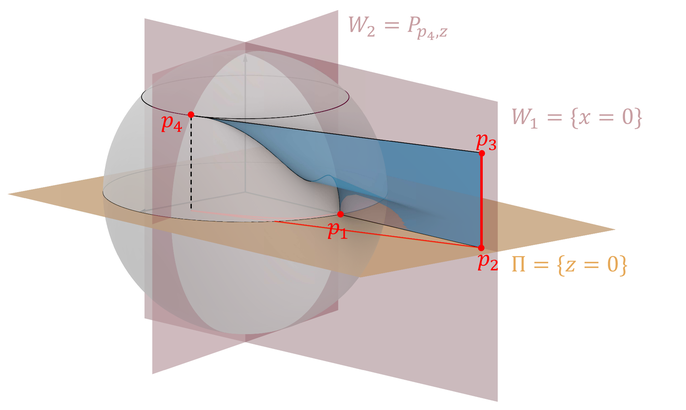}
    \caption{The minimal surface is contained in the wedge delimited by the two red planes $ \{x=0\}$ and $P_{p_4,z}$, and projects injectively on the yellow plane $\{z=0\}$. The segment $\ell_2$ from $p_2$ to $p_3$ (shown in red) projects on a point.}
    \label{fig:graph_config2}
\end{figure}

\begin{figure}[htb!]
    \centering
    \includegraphics[width = 0.4\textwidth]{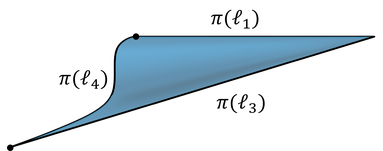}
    \caption{The curve $\pi(\ell_1) \cup \pi(\ell_3) \cup \pi(\ell_4)$ bounds a compact domain $\Omega\subset\{z=0\}$.}
    \label{fig:omega_proj}
\end{figure}

Since $M$ is a vertical graph, we can fix the Gauss map $N$ of $M$
pointing upwards. We observe that $N$ is horizontal along $\ell_2$ and
we can consider the angle it forms with a fixed horizontal
direction. We are going to prove that such an angle provides a
monotone function along $\ell_2$.

\begin{corollary}\label{l:N}
  There are no two points in $\ell_2$ with the same Gauss map.
\end{corollary}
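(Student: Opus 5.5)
We need to show that the Gauss map $N$, which is horizontal along the vertical segment $\ell_2$, takes distinct values at distinct points of $\ell_2$. We argue by contradiction. Suppose $q_1\neq q_2$ are points of $\ell_2$ with $N(q_1)=N(q_2)=v$. Since $v$ is horizontal and $\ell_2$ is vertical, the vertical plane $P$ through $\ell_2$ orthogonal to $v$ is tangent to $M$ at both $q_1$ and $q_2$. We then run the usual intersection-curve counting of Proposition~\ref{p:graph} on $M\cap P$.

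\textbf{Step 1 (local picture).} Near a boundary point $q$ on a straight segment, $M$ extends by Schwarz reflection across $\ell_2$ (rotation by $\pi$ about the line). The extended surface is minimal and is tangent to $P$ at $q$. Hence $P\cap M$ near $q$ is the segment $\ell_2$ together with at least one further curve leaving $q$ into the interior of $M$. This is because the intersection of the reflected surface with its tangent plane consists of at least two analytic curves crossing at $q$; one of them is the line, and a second transverse branch enters $M$.

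\textbf{Step 2 (global counting).} By the maximum principle there are no closed loops in $M\cap P$, and no two branches end at a common point of $\partial M$ or on the same segment (as in Proposition~\ref{p:graph}). So the two interior branches starting at $q_1$ and $q_2$ must end at distinct points of $\partial M\setminus\ell_2$. Now $P$ is a vertical plane containing $\ell_2$, so it meets $\ell_1$ only at $p_2$ and $\ell_3$ only at $p_3$ (both segments lie in independent directions and start on $\ell_2$). Therefore these branches must end on $\ell_4$. This produces a component $M'$ of $M-P$ whose boundary lies in $P\cup\ell_4$ together with possibly a piece of $\ell_2\subset P$. Concretely, $M'$ is the region between the two branches and the subarc of $\ell_2$ from $q_1$ to $q_2$.

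\textbf{Step 3 (contradiction).} If $P$ passed through the origin, Lemma~\ref{l:P} would give $M'\subset P$, which is impossible. In general $P$ does not contain the origin, so instead we use the argument at the end of Proposition~\ref{p:graph}. Let $P^+$ be the side containing $M'$. The distance from $M'$ to $P$ is maximized on $\partial M'$, hence at a point $q\in\ell_4$. There the inner conormal equals the position vector. Using that $M\subset W_z$ and Lemma~\ref{l:ConvexHull}, we check that this conormal has a positive component towards $P^+$, so interior points of $M'$ lie farther from $P$, a contradiction. If instead $M'$ lies on the side containing the origin, we use the other side's component: the branches split $M$ into at least three pieces, and one of them lies on the far side with boundary in $P\cup\ell_4$.

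\textbf{Main obstacle.} The hardest part is the sign in Step 3: showing that the relevant component lies on the side of $P$ away from the origin, so that the free-boundary conormal condition gives the contradiction. I would handle this with Proposition~\ref{p:z}, namely that $\ell_4$ projects as a radial graph and $M$ is a vertical graph over $\Omega$. Since $P$ is vertical, $M'$ then corresponds to a subdomain of $\Omega$ cut off by the line $\pi(P)$. Because $\pi(\ell_2)$ is a single point, the two branches project to arcs from $\pi(\ell_2)$ to $\pi(\ell_4)$. Radiality then forces one of the cut-off subdomains to lie on the far side of $\pi(P)$ from the origin, with boundary only on $\pi(P)\cup\pi(\ell_4)$, which is the component needed for the distance argument.
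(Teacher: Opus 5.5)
\textbf{There is a genuine gap in the second case of Step~3.}

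Steps~1 and~2 are essentially fine. The first case of Step~3, with $M'$ on the side of $P$ away from the origin, is exactly the closing argument of Proposition~\ref{p:graph}. The other case is not handled.

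\emph{Why the origin side is not excluded.} A vertical plane through $\ell_2$ contains the origin only if it is $\{x=0\}$. That is impossible here, since Step~1 puts interior points of $M$ on $P$ while the interior of $M$ lies in $\{x>0\}$. So $P$ never contains the origin and Lemma~\ref{l:P} is never available. Which side $M'$ lies on is governed by the sense in which $N$ turns away from $v$ between $q_1$ and $q_2$, and nothing you have proved controls this.

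\emph{Why the distance argument fails there.} Suppose $M'$ lies on the origin side, and take $N_P$ pointing away from the origin. At the point $q\in\ell_4$ where the distance to $P$ is maximal, the derivative of that distance along the inner conormal $q$ is $-\langle q,N_P\rangle$. Its sign is undetermined, so no contradiction arises.

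\emph{Why your fallback is false.} Take $q_1$ below $q_2$, with the branches $c_1,c_2$ ending at $b_1,b_2\in\ell_4$. The two other pieces are bounded by $c_1$, $[p_2,q_1]$, $\ell_1$ and the arc of $\ell_4$ from $b_1$ to $p_1$, and by $c_2$, $[q_2,p_3]$, $\ell_3$ and the arc of $\ell_4$ from $p_4$ to $b_2$. Since $\ell_1,\ell_3\not\subset P$, neither piece has boundary in $P\cup\ell_4$.

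The ``main obstacle'' paragraph does not close this. Radiality of $\pi(\ell_4)$ concerns lines through the origin and says nothing about how $\pi(P)$ cuts $\Omega$. The far-side component you assert is never actually derived.

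\textbf{The fix: use Proposition~\ref{p:z} directly.} This makes Step~3 unnecessary.
\begin{itemize}
\item Interior points of $M\cap P$ project into $\pi(P)\cap\Omega$. Since $\Omega\subset\{x>0\}$ and $\pi(P)$ is a line through $\pi(p_2)=(0,1+a,0)$ other than the $y$-axis, only one ray of $\pi(P)$ issuing from $\pi(p_2)$ meets $\Omega$.
\item The branches $c_1,c_2$ are disjoint, by the maximum principle as in your Step~2. Both start at points projecting to $\pi(p_2)$, and their interiors project into that single ray.
\item Hence their projections share an initial segment of the ray. So distinct interior points of $M$ have the same vertical projection, contradicting Proposition~\ref{p:z}.
\end{itemize}

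This is in substance the paper's proof, which skips the intersection-curve analysis entirely. It translates $M$ vertically by $q_2-q_1$. The translate $\hat M$ has a piece of $\ell_2$ near $q_2$ in its boundary and is tangent to $M$ at $q_2$. The boundary maximum principle then gives a curve of $M\cap\hat M$ leaving $q_2$ into the interior, whose points are interior points of $M$ that are vertical translates of other points of $M$, against the graph property.

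You should also treat $q_i\in\{p_2,p_3\}$. There the tangent plane is $\{x=0\}$, respectively $P_{\ell_3,z}$. An interior point of $\ell_2$ with that tangent plane is ruled out by your Step~1 together with Lemma~\ref{l:ConvexHull}.
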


\begin{figure}[htb!]
    \centering
    \includegraphics[width=0.5\linewidth]{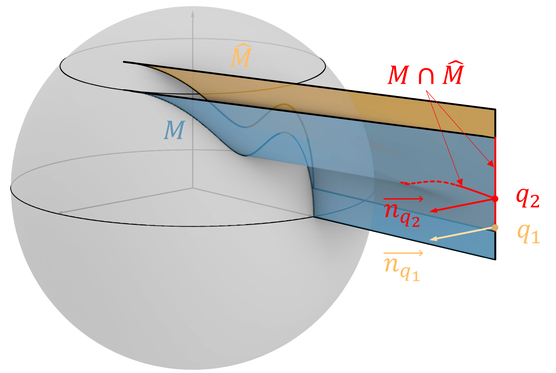}
    \caption{If there are two points $q_1, q_2$ with the same Gauss map, then the intersection of the minimal surface with its vertically translated copy (mapping $q_1$ to $q_2$) is a curve, meaning that interior points are projected onto the same point, which contradicts the fact that the minimal surface is a vertical graph.}
    \label{fig:lemma4_12}
\end{figure}

\begin{proof}
  Suppose there exist two different points $q_1,q_2\in\ell_2$ with the same Gauss map. Let us call $\hat M$ the translated copy of $M$ mapping $q_1$ to $q_2$, as illustrated in~\cref{fig:lemma4_12}. We observe that a neighborhood of $q_2$ in $\ell_2$ is contained in the boundary of $\hat M$. Since $\hat M$ and $M$ are tangent at $q_2$, we obtain using the maximum principle at the boundary that there must exist a curve contained in $\hat M\cap M$ emanating from $q_2$ (\textit{i.e.} $\hat M$ cannot be contained on one side of $M$). As $\hat M$ is a vertical translation of $M$, we obtain that there are different points in the interior of $M$ with the same vertical projection, contradicting Proposition~\ref{p:z}.
\end{proof}

The following proposition proves that $M$ is also a graph when
projecting orthogonally in the direction of $\ell_1$ and $\ell_3$. We
obtain, as a consequence arguing as in Corollary~\ref{l:N}, that
there are no two points in $\ell_1$ nor in $\ell_3$ with the same
Gauss map.  We observe that, since $\ell_3$ is tangent to
$\mathbb{S}^2$, then the plane perpendicular to $\ell_3$ passing
through the origin coincides with the vertical plane $P_{p_4,z}$
passing through $p_4$ and the origin.

\begin{proposition}\label{p:y}
  The minimal disk $M$ is a horizontal graph over $\{y=0\}$ and over $P_{p_4,z}$. As a consequence, there are no two points in $\ell_1$ nor in $\ell_3$ with the same Gauss map.
\end{proposition}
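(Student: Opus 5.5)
The plan is to apply Proposition~\ref{p:graph} twice: once with $\Pi=\{y=0\}$, which is the plane through the origin orthogonal to $\ell_1$, and once with $\Pi=P_{p_4,z}$, which is orthogonal to $\ell_3$ as observed just above. The Gauss map statement will then follow by repeating the translation argument of Corollary~\ref{l:N}.

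\emph{The case $\Pi=\{y=0\}$.} I check the four hypotheses in turn.
\begin{enumerate}
\item[(a)] The projection of $\Gamma$ onto $\{y=0\}$ collapses $\ell_1$ to the origin. Segment $\ell_2$ becomes the vertical segment from $0$ to $(0,0,h)$, and $\ell_3$ becomes a horizontal segment from $(0,0,h)$ to $(x_4,0,h)$. Together with the segment joining $(0,0,0)$ to $(x_4,0,h)$, these close up a triangle, which is convex. So Remark~\ref{r:convex} gives (a).
\item[(b)] For the wedge I take the intersection of the two half-spaces given by items (1) and (2) of Lemma~\ref{l:ConvexHull}: below $P_{O,p_3,p_4}$ and above $P_{O,p_1,p_4}$. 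Both planes contain the line $Op_4$, but that line is not orthogonal to $\{y=0\}$, so this choice does not fit (b) directly. Instead I use $\{z\ge 0\}$ together with $P_{O,p_3,p_4}$. These are two planes through the origin whose common line is the horizontal line through the origin parallel to $\ell_3$. That line is orthogonal to $P_{p_4,z}$, which is the wrong plane for $\Pi=\{y=0\}$. So the correct choice for $\{y=0\}$ is the wedge bounded by $\{z=0\}$ and $P_{O,p_1,p_4}$. Their common line is the $y$-axis, which is orthogonal to $\{y=0\}$, as (b) requires. It passes through the origin, and each half-plane contains an endpoint of $\ell_4$: $p_1\in\{z=0\}$ and $p_4\in P_{O,p_1,p_4}$. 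The containment of $M$ in this wedge comes from Lemma~\ref{l:ConvexHull}, item (2) together with $z>0$.
\item[(c)] The plane orthogonal to $\{y=0\}$ through $p_1$ and $p_4$ is $P_{O,p_1,p_4}$ itself, which contains the origin. Item (2) places $\Gamma$ on its closed upper side, and $\Gamma$ is not contained in it. This is the non-strict version allowed in (c).
\item[(d)] $\ell_1$ is orthogonal to $\{y=0\}$, so it is excluded. $\ell_2$ and $\ell_3$ project injectively onto two sides of the triangle above. $\ell_4$ projects injectively onto a curve joining $0$ to $(x_4,0,h)$, because by Lemma~\ref{l:CHull} it lies in the hemisphere $\{y>0\}$, which is a graph over $\{y=0\}$. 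It remains to show the resulting curve is Jordan. For this I use Lemma~\ref{l:radial}, or directly the confinement of $\ell_4$ between the planes $P_{O,p_1,p_4}$ and $P_{O,p_3,p_4}$, which project onto lines through the origin. This puts $\pi(\ell_4)$ strictly inside the projected triangle region or on its far side, so it meets the projected segments only at the endpoints.
\end{enumerate}

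\emph{The case $\Pi=P_{p_4,z}$.} The argument is symmetric, with $\ell_3$ now orthogonal to $\Pi$. The projection of $\Gamma$ is $\ell_1$ (horizontal) followed by $\ell_2$ (vertical), which together with the closing chord forms a triangle, so (a) holds. For (b) I use the wedge bounded by $\{z=0\}$ and $P_{O,p_3,p_4}$. Their common line is the horizontal line through $O$ parallel to $\ell_3$, which is orthogonal to $\Pi$. The half-plane in $\{z=0\}$ contains $p_1$ and the other contains $p_4$. Containment of $M$ follows from item (1) of Lemma~\ref{l:ConvexHull}. For (c), the separating plane is $P_{O,p_3,p_4}$, and $\Gamma$ lies below it by item (1). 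For (d), $\ell_4$ lies in the hemisphere on the $\Gamma$ side of $P_{p_4,z}$ by Lemma~\ref{l:CHull}, and the Jordan property is checked as in the first case.

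\emph{Gauss map statement.} Suppose $q_1,q_2\in\ell_1$ (respectively $\ell_3$) have the same normal. Translating $M$ along $\ell_1$ (respectively $\ell_3$) so that $q_1$ maps to $q_2$ gives a surface tangent to $M$ at a boundary point, with a common boundary arc. The boundary maximum principle then produces an intersection curve. This gives two interior points with the same projection onto $\{y=0\}$ (respectively $P_{p_4,z}$), contradicting the graph property just proved.

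I expect the main obstacle to be condition (d), specifically the Jordan property of the projected boundary. Controlling where $\pi(\ell_4)$ sits relative to the projected segments needs the sharp confinement of Lemma~\ref{l:ConvexHull} and Remark~\ref{T}. A second delicate point is justifying the non-strict form of (c) when the separating plane passes through the origin.
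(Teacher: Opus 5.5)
Your strategy is the paper's: apply Proposition~\ref{p:graph} with $\Pi=\{y=0\}$ and with $\Pi=P_{p_4,z}$, then repeat the translation argument of Corollary~\ref{l:N}. Your wedge for $\Pi=\{y=0\}$, bounded by $\{z=0\}$ and $P_{O,p_1,p_4}$, differs from the paper's wedge (bounded by $\{x=0\}$ and $P_{O,p_1,p_4}$) but is also admissible. However, two of your hypothesis checks fail as written.

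\textbf{Hypothesis (c) for $\Pi=P_{p_4,z}$.} You verify (c) using $P_{O,p_3,p_4}$. But (c) concerns a specific plane: the plane $P_0$ orthogonal to $\Pi$ through \emph{both} endpoints $p_1,p_4$ of $\ell_4$. The trace $P_{O,p_3,p_4}\cap\{z=0\}$ is the line through $O$ parallel to $\ell_3$, and $p_1=(0,1,0)$ does not lie on it, so $P_{O,p_3,p_4}\neq P_0$. The correct $P_0$ contains $\ell_3$ and the line through $p_1$ parallel to $\ell_3$. It does not contain $O$, and it separates $\Gamma$ (non-strictly) from the origin. This is how the paper checks (c).

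\textbf{The Jordan property in (d).} Neither of your two justifications is valid.
\begin{itemize}
\item Lemma~\ref{l:radial} requires $\partial M\cap L_W=\emptyset$. This fails here, since $L_W$ is the $y$-axis and contains $\ell_1$. Moreover, the lemma is stated under the hypotheses of Proposition~\ref{p:graph}, which include (d) itself, so invoking it would be circular.
\item $P_{O,p_3,p_4}$ is not orthogonal to $\{y=0\}$. Its normal $p_3\times p_4$ has $y$-component $h\,x_4\neq 0$, where $x_4>0$ is the first coordinate of $p_4$. So it does not project onto a line in $\{y=0\}$, and your conclusion ``inside the triangle or on its far side'' is not usable.
\end{itemize}
What you need is already in Lemma~\ref{l:ConvexHull}: the interior of $\ell_4$ lies in $W_z\cap\{0<z<h\}$. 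Hence its projection onto $\{y=0\}$ lies in $\{x>0,\ 0<z<h\}$, and it meets $\pi(\ell_2)\subset\{x=0\}$ and $\pi(\ell_3)\subset\{z=h\}$ only at the endpoints. For $\Pi=P_{p_4,z}$, use $z>0$ together with item (3) of Lemma~\ref{l:ConvexHull}. The plane $P_{\ell_3,z}$ is orthogonal to $P_{p_4,z}$ and projects onto the line carrying $\pi(\ell_2)$, while $\pi(\ell_1)\subset\{z=0\}$. With these two corrections the argument is complete.
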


\begin{figure}[htb!]
    \centering
    \includegraphics[width=0.6\linewidth]{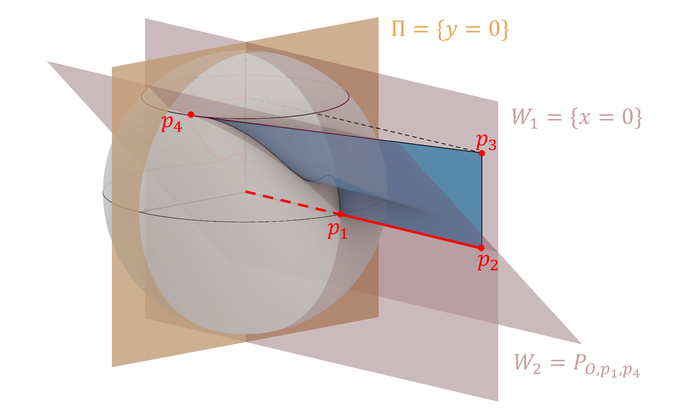}
    \caption{The minimal surface is contained in the wedge delimited by the two red planes $ \{x=0\}$ and $P_{O,p_1,p_4}$, and projects injectively on the yellow plane $\{y=0\}$. The segment $\ell_1$ from $p_1$ to $p_2$ (shown in red) projects onto a point.}
    \label{fig:graph_config1}
\end{figure}

\begin{figure}[htb!]
    \centering
    \includegraphics[width=0.6\linewidth]{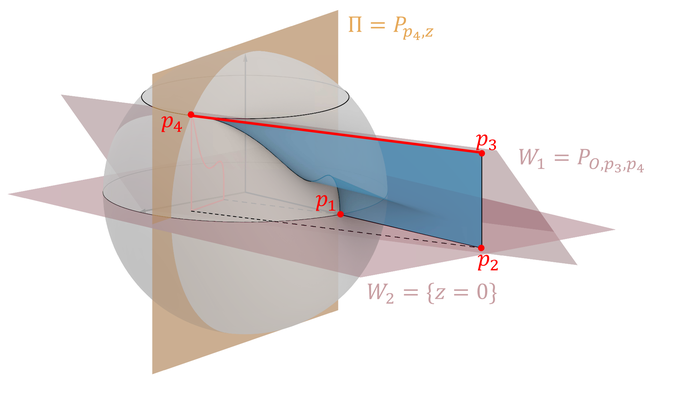}
    \caption{The minimal surface is contained in the wedge delimited by the two red planes $ \{z=0\}$ and $P_{O,p_3,p_4}$, and projects injectively on the yellow plane $P_{p_4,z}$. The segment $\ell_3$ from $p_3$ to $p_4$ (shown in red) projects onto a point.}
    \label{fig:graph_config3}
\end{figure}

\begin{proof}
  We are going to use Proposition~\ref{p:graph} taking $\Pi$ as
  $\{y=0\}$ (see~\cref{fig:graph_config1}) or $P_{p_4,z}$ (see
 ~\cref{fig:graph_config3}).  By Remark~\ref{r:convex},
  hypothesis $(a)$ holds in both cases.
  
  From Lemma~\ref{l:ConvexHull} we know that $M$ is contained in the
  wedge region $W_z$ and hence it is located on one side of the planes
  $\{x=0\}$ and $P_{p_4,z}$. Moreover
  $\ell_4 \subset (\mathbb{S}^2\cap W_z)$ is embedded, so $\ell_4$
  projects one to one over $\{y=0\}$ (resp. $P_{p_4,z}$). Then, up to
  $\ell_1$ (resp. $\ell_3$), $\Gamma\cup\ell_4$ projects injectively
  on $\{y=0\}$ (resp. $P_{p_4,z}$), and its projection bounds a
  domain. Therefore, hypothesis $(d)$ is satisfied.

  Again by Lemma~\ref{l:ConvexHull}, $M$ lies above $P_{O,p_1,p_4}$
  and on one side of $\{x=0\}$. These two planes intersect along the
  $y$-axis and they are both orthogonal to $\{y=0\}$ and $M$ is contained
  in one of the convex wedges determined by these planes. This is the
  wedge region we consider satisfying $(b)$ for the case
  $\Pi=\{y=0\}$. The plane $P_{O,p_1,p_4}$ is the one appearing in
  $(c)$; it contains the origin and leaves $\Gamma$ on one
  side in a non-strictly way, and we are done in this case.

  In the case of $\Pi=P_{p_4,z}$, we consider the convex wedge region
  between $\{z=0\}$ and $P_{0,p_3,p_4}$ that contains $M$ (see
  Lemma~\ref{l:ConvexHull}). This wedge satisfies condition $(b)$ of
  Proposition~\ref{p:graph}.  Finally, we observe that the plane
  orthogonal to $P_{p_4,z}$ passing through $p_1,p_4$ must contain
  $\ell_3$ and its parallel line passing through $p_1$ (at height
  zero). In particular, this plane separates $\Gamma$ from the origin,
  and condition $(c)$ is also satisfied.
\end{proof}

Finally, we are going to prove that the vertical coordinate of the normal vector along $\ell_4$ never vanishes.

\begin{lemma}\label{l:Nhoriz}
  The Gauss map $N$ is never horizontal at points of $\ell_4$.
\end{lemma}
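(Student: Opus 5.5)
We want to show that the Gauss map $N$ is never horizontal along $\ell_4$.

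At a point $p\in\ell_4$, Remark~\ref{r:cusp} tells us that $T_pM$ is spanned by the position vector $p$ and the tangent $\ell_4'(p)$. Hence $N(p)$ is parallel to $p\times \ell_4'(p)$. Saying that $N(p)$ is horizontal means $T_pM$ is vertical. Since $T_pM$ contains $p$, this is the same as saying that $T_pM$ is a vertical plane through the origin.

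Suppose, by contradiction, that this happens at some $p\in\ell_4$, and call $P=T_pM$. This is a vertical plane through the origin. I plan to reproduce the argument of Proposition~\ref{p:graph}, which already forbids vertical tangent planes at interior points, now at a boundary point. Along the free boundary we can still use the local picture at the boundary from Remark~\ref{r:transverse}. Either $M\subset P$, which is impossible because $\Gamma\not\subset P$, or $M$ has points on both sides of $P$ near $p$. In the latter case $M\cap P$ contains at least one curve emanating from $p$ into the interior of $M$.

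More precisely, the local structure of $M\cap P$ near $p$ comes from reflecting across $\ell_4$. Here I would use the fact that $\ell_4$ is a line of curvature meeting $\mathbb{S}^2$ orthogonally, so $M$ extends across $\ell_4$ by inversion in the sphere, as in the free boundary setting. Tangency then gives an equiangular system of curves at $p$, of which at least one enters $M$. Together with $\ell_4$ itself, these curves split a neighborhood of $p$ in $M$ into at least two sectors lying on opposite sides of $P$.

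Next I follow these intersection curves globally. By the maximum principle they form no closed loops, and two of them cannot end at the same boundary point. They also cannot end on the same vertical segment $\ell_2$, which projects to a point. Since $P$ is a vertical plane through the origin, hypothesis (a) (Remark~\ref{r:convex}) gives that $P$ meets $\Gamma$ in at most two points. So some component $M'$ of $M-P$ has boundary in $\ell_4\cup P$. Lemma~\ref{l:P} then says $M'\subset P$, a contradiction.

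Alternatively, and perhaps more cleanly, we can use Proposition~\ref{p:z}. The vertical projection of $\ell_4$ is a radial graph, and $M$ is a vertical graph over $\Omega$. If $N(p)$ were horizontal, the graph function would have a gradient blowing up at a boundary point. I would then use the Hopf boundary lemma for the Gauss map, in the spirit of Corollary~\ref{l:N}, to reach a contradiction.

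The main obstacle is making sure a component with boundary only in $\ell_4\cup P$ really appears, given that $p$ itself lies on $\ell_4$. If $P$ cuts $\Gamma$ twice, the single curve from $p$ together with those two crossings might all be accounted for by components touching $\Gamma$. I would handle this in two ways.

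First, I would use Lemma~\ref{l:radial}. A vertical plane through the origin meets $\ell_4$ in exactly one point, namely $p$. So $P$ is tangent there, and the local picture has curves entering on both sides of $\ell_4$. This forces at least two interior curves from $p$.

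Second, I would use the position of $\Gamma$ from Lemma~\ref{l:ConvexHull} inside the wedge $W_z$ to check that $P$ cuts $\Gamma$ at most once on each side. Together these yield the extra component needed to apply Lemma~\ref{l:P}.
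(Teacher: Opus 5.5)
Once the local step is granted, your final skeleton is the paper's. $P=T_pM$ is a vertical plane through $O$; at least two arcs of $\overline M\cap P$ leave $p$; $P$ meets $\Gamma$ in a single point; arcs neither close up nor share endpoints. So some arc ends at $q'\in\ell_4\setminus\{p\}$, which contradicts the radial-graph property of Proposition~\ref{p:z}. The paper stops there, so Lemma~\ref{l:P} is not needed.

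\textbf{The gap: at least two arcs entering $M$ at $p$.} This is the step that carries the whole argument, and you do not establish it.
Your first justification, that $M$ extends across $\ell_4$ by inversion in $\mathbb{S}^2$, is false. Inversion does not preserve minimality (a plane missing $O$ is sent to a sphere), so there is no such reflection principle. Even an honest analytic extension of $M$ across $\ell_4$ would only give an equiangular system of $2n$ rays at $p$. Of these, possibly only $n-1=1$ points into $M$: take rays at angles $0,\pi/2,\pi,3\pi/2$ with $\ell_4$ along angle $0$.
Your second justification, via Lemma~\ref{l:radial}, is a non sequitur for the same reason. A single interior arc is topologically compatible with $\ell_4$ crossing $P$ at $p$.

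The missing idea is to use the free-boundary condition. Let $f=\langle x,\nu_P\rangle$, which is harmonic on $M$.
\begin{itemize}
\item The exterior conormal along $\ell_4$ is $\eta=-x$, so $\partial_\eta f=-f$ on $\ell_4$.
\item Tangency of $P$ gives vanishing order $n\ge 2$ at $p$.
\item In conformal coordinates $z=s+it$ with $M=\{t\ge 0\}$ near $p$, the Robin condition reads $\partial_t f=\lambda f$ on $\{t=0\}$. This forces the leading homogeneous term to satisfy $\partial_t p_n=0$ there, so $f=a\,r^n\cos(n\theta)+O(r^{n+1})$ with $a\in\mathbb{R}\setminus\{0\}$.
\item Hence exactly $n\ge 2$ nodal arcs enter $M$, and none is tangent to $\ell_4$.
\end{itemize}
The paper compresses this into ``contact order at least one''.

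\textbf{The global count needs a sharper input.} Hypothesis (a) only says $P$ meets $\Gamma$ in at most two points. With two arcs from $p$, both could end at distinct points of $\Gamma$. Then no component of $M-P$ has boundary in $\ell_4\cup P$, and your ``at most once on each side'' does not exclude this.
The paper uses the exact geometry instead. Since $p$ lies in the open wedge $W_z$, the line $\pi(P)$ misses $\pi(\ell_1)\cup\pi(\ell_2)$, which lie on the positive $y$-axis away from $O$. It meets the segment $\pi(\ell_3)$ exactly once, so $P\cap\Gamma$ is a single point.
Alternatively, $P\cap\ell_4=\{p\}$ and $p_1,p_4$ lie on opposite sides of $P$, so $f$ changes sign along $\ell_4$ at $p$. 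Hence $n$ is odd, $n\ge 3$, and three arcs against at most two points of $\Gamma$ also suffice.

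\textbf{Drop the Hopf-lemma alternative.} A vertical graph can perfectly well have a vertical tangent plane at a boundary point. The translation trick of Corollary~\ref{l:N} needs a straight boundary segment, which $\ell_4$ is not.
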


\begin{proof}
  Suppose there is a point $q\in \ell_4$ whose tangent plane $P$ is
  vertical, as shown in~\cref{fig:lemme4.14}. We recall that the
  tangent plane of $M$ at any point of $\ell_4$ is the plane defined
  by the position vector and the tangent vector $\ell_4'$ at the
  point. Thus $P$ passes through the origin.

  Since $P$ and $M$ have contact order at least one, $P$ intersects
  $\overline M$ in two or more curves emanating from $q$. By
  Lemma~\ref{l:ConvexHull} we know that $P$ must intersect $W_z$, and
  it intersects $\Gamma$ at a single point.  By the maximum principle,
  $\overline M\cap P$ cannot contain any closed loop. In particular,
  one of the curves in $\overline M\cap P$ arrives at a point $q'$ in
  $\ell_4$, $q'\neq q$.  This contradicts Proposition~\ref{p:z}
  because the vertical projection of $\ell_4$ is a radial graph from
  the origin.
\end{proof}

\begin{figure}[htb!]
    \centering
    \includegraphics[width=0.5\linewidth]{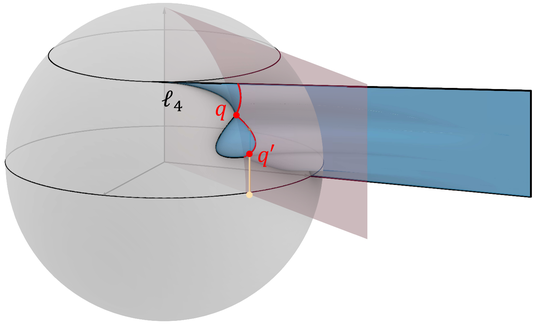}
    \caption{If the Gauss map is horizontal at some point of $\ell_4$,
      then one of the intersection curves must end at another point
      $q'\in\ell_4$, which contradicts that the vertical projection of
      $\ell_4$ is a radial graph from the origin.}
    \label{fig:lemme4.14}
\end{figure}

We now have all the ingredients to prove Theorem \ref{th:cat}.

\begin{th*}
  For any $m\geq 3$, there exists a one-parameter family (after identifying by isometries of $\mathbb{R}^3$) of embedded minimal annuli, called {\rm tensile catenoids}. Any boundary curve of one such surface is the union of $m$ arcs, any one of them an asymptotic line of the surface with constant curvature. Two consecutive boundary arcs meet, forming a cusp (\textit{i.e.} they meet asymptotically, forming a corner on the surface of intrinsic angle zero). Moreover, any annulus in this family is symmetric with respect to a horizontal plane dividing the surface into two vertical graphs; and it also has $m$ vertical planes of symmetry meeting at an angle $\frac{\pi}{m}$.

  The parameter of the family is the length $\rho$ of the neck of the tensile catenoids (\textit{i.e.} the length of the intersection of each annulus with its horizontal plane of symmetry), going from zero (the limit when $\rho$ goes to zero is a piece of the horizontal plane of symmetry) to $2m\cos\frac{\pi}{m}$ (in this case, the annulus splits into $m$ disks, two consecutive ones joined by a point in their boundary).
\end{th*}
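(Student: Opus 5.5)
The plan is to take the conjugate surface $M^*$ of the unique disk $M$ solving the partially-free boundary problem for $\Gamma=\ell_1\cup\ell_2\cup\ell_3$. We then extend $M^*$ by Schwarz reflections across the planes containing the conjugate curves $\ell_i^*$, $i=1,2,3$.

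First we identify the conjugate boundary curves. Each $\ell_i$ is a straight segment, so $\ell_i^*$ is a planar geodesic line of curvature lying in a plane orthogonal to $\ell_i$. Thus $\ell_2^*$ lies in a horizontal plane, $\ell_1^*$ in a plane orthogonal to the $y$-direction, and $\ell_3^*$ in a plane orthogonal to $\ell_3$. Because $N^*=N$ and $M$ meets these planes in a controlled way, the normals along $\ell_1^*$ and $\ell_3^*$ are horizontal directions. Hence the planes of $\ell_1^*$ and $\ell_3^*$ are vertical, and the angle between them equals the angle $\frac{\pi}{m}$ between the vertical planes of $\ell_1$ and $\ell_3$; this follows from the normal along $p_1,p_4$ and Remark~\ref{r:cusp}.

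By \eqref{eq:relacion}, $\ell_4^*$ is an asymptotic line of $M^*$ with constant curvature $1$. At $p_4^*$ the angle between $\ell_3^*$ and $\ell_4^*$ is the same as between $\ell_3$ and $\ell_4$, which is zero by Remark~\ref{r:cusp}. At $p_1$ the angle is $\pi/2$, since $\ell_1$ is orthogonal to the sphere.

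Next we reflect. Reflecting $M^*$ across the horizontal plane of $\ell_2^*$ doubles it. Then the $2m$ reflections generated by the two vertical planes at angle $\pi/m$ close the surface up into an annulus; the group generated is the dihedral group of order $2m$. Along the plane of $\ell_1^*$ the arc $\ell_4^*$ meets its mirror image orthogonally to the plane, so it continues smoothly as a single asymptotic arc of constant curvature. Along the plane of $\ell_3^*$ the tangency at $p_4^*$ produces a cusp between $\ell_4^*$ and its mirror image. Each boundary component therefore consists of $m$ arcs joined by cusps, and the horizontal plane and the $m$ vertical planes are planes of symmetry.

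For embeddedness, we show that $M^*$ is a graph over its projection into the wedge bounded by the two vertical symmetry planes, and lies on one side of the horizontal plane of $\ell_2^*$. The plan is Krust-type. By Proposition~\ref{p:z}, $M$ is a vertical graph, and $M$ is also a graph in the directions of $\ell_1$ and $\ell_3$ by Proposition~\ref{p:y}. From this I would argue that the vertical (respectively horizontal) coordinate of $M^*$ is monotone along the relevant curves. I would use Corollary~\ref{l:N}, Proposition~\ref{p:y} (no repeated Gauss map values on $\ell_i$) and Lemma~\ref{l:Nhoriz} to show that the boundary of $M^*$ projects injectively onto the boundary of the fundamental wedge domain. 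Then the same graph/maximum-principle argument gives that the interior of $M^*$ is a graph over that domain, so the reflected pieces only meet along the symmetry planes.

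The main obstacle is this step: controlling the position of $\ell_2^*$, $\ell_1^*$, $\ell_3^*$ in their planes, meaning that they are convex curves turning monotonically by the right total angle, and that $M^*$ stays on the correct side of each symmetry plane. I would first try to read the turning of $\ell_i^*$ from the monotone Gauss map along $\ell_i$, together with $\kappa_g^*=\kappa_g$.

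Finally, for the parameter, the neck is the union of the $2m$ copies of $\ell_2^*$, so $\rho=2m\,|\ell_2^*|$; the length of $\ell_2^*$ equals the length of $\ell_2$ by isometry. Hence $\rho=2mh$ with $h\in(0,\cos\frac{\pi}{m})$, ranging over $(0,2m\cos\frac\pi m)$. As $h\to 0$ the disk $M$ flattens into a planar piece. As $h\to\cos\frac\pi m$ we get $a\to 0$, so $p_1$ reaches the sphere boundary configuration and the annulus pinches into $m$ disks. I would verify these limits by a compactness argument using the uniqueness Corollary.
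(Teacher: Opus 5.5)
There is a genuine gap, and it is the step you flag as ``the main obstacle'': the embeddedness of the fundamental piece $M^*$. Your structural steps are fine: the three symmetry planes, the dihedral reflection group, the cusp at $p_4^*$, the smooth junction across the plane of $\ell_1^*$, and $\rho=2mh$. The monotonicity of $N$ along $\ell_1,\ell_2,\ell_3$ (Corollary~\ref{l:N}, Proposition~\ref{p:y}) does give convexity of $\ell_1^*,\ell_2^*,\ell_3^*$, with $\ell_2^*$ turning by $\frac{\pi}{m}$. Two small corrections there. For these planar geodesics the relevant identity is $\kappa_n^*=-\tau_g$, i.e.\ the turning of $\ell_i^*$ is the rotation of $N$ along $\ell_i$, not $\kappa_g^*=\kappa_g$. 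Also, $N^*$ along $\ell_1^*,\ell_3^*$ is not horizontal: it rotates inside vertical planes, which are vertical simply because $\ell_1,\ell_3$ are horizontal.

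The real problem is that a Krust-type argument is unavailable, since the projection of $M$ on $\{z=0\}$ is not convex. The boundary curve that must actually be controlled is $\ell_4^*$, and your plan never analyses it. Lemma~\ref{l:Nhoriz} enters as follows. Since $\ell_4^*$ is asymptotic with $\kappa_g^*=1$, its curvature vector is the conormal of $M^*$. Hence the signed curvature of its vertical projection equals, up to a fixed sign and a positive factor, $\langle N^*,e_3\rangle$, which never vanishes. So $\pi(\ell_4^*)$ is locally strictly convex (concave with respect to $\Omega^*$).

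That alone gives neither injectivity nor containment in the wedge $W^*$ bounded by the planes of $\ell_1^*$ and $\ell_3^*$. Containment in $W^*$ is exactly what makes the vertical reflections disjoint. The paper supplies the missing global step. Take a vertical plane $P$ through the axis of $W^*$ and the point of $M^*\cap P$ farthest from the axis. It cannot be interior, because $M^*$ is a vertical multigraph. It cannot lie on $\ell_2^*$, which is concave with respect to $\Omega^*$ and orthogonal to both walls. So it lies on $\ell_4^*$, and convexity of $\pi(\ell_4^*)$ forbids a second point of $P\cap\ell_4^*$. Thus $\pi(\ell_4^*)$ is a radial graph in $W^*$, and $M^*\subset W^*$ is a vertical graph.

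Second, you list ``$M^*$ lies on one side of the horizontal plane of $\ell_2^*$'' as a goal but give no argument, and it does not follow from the graph property. If the height function $u$ of $M^*$, which vanishes on $\ell_2^*$, became negative somewhere, $M^*$ and its mirror image would cross along $\{u=0\}$.

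The paper's argument runs as follows. Since $\ell_1^*,\ell_3^*\subset\{z>0\}$, a negative minimum of $z$ would be attained on $\ell_4^*$. There the vertical component of the curvature vector (the outer conormal) is negative, whereas it is positive at $p_1^*$, so it changes sign at some $q_0^*\in\ell_4^*$. Conjugation exchanges the conormal of $\ell_4^*$ with the tangent of $\ell_4$, so the height of $\ell_4$ would have a local extremum at $q_0$. This contradicts the monotonicity of the third coordinate along $\ell_4$ from Lemma~\ref{l:rotation}.

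Without these two arguments your outline yields at best an immersed annulus with the stated boundary behaviour, not the embedded tensile catenoid.
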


\begin{proof}
  Let us consider the surface $M$ obtained above for some fixed $m\geq
  3$ and $h\in(0,\cos\frac\pi m)$.  We call $M^*$ the conjugate
  minimal surface of $M$. Since the Gauss map $N^*$ of $M^*$ coincides
  with $N$ (the Gauss map of $M$ considered above) at conjugate
  points, it always points upwards, and $M^*$ is a vertical
  multigraph. We are going to prove that $M^*$ is in fact a graph (in
  particular, embedded). We observe that the projection of $M$ over
  the horizontal plane $\{z=0\}$ 
   is not convex, so Krust's Theorem cannot be applied. Instead, we are going to
  study the behaviour of the boundary curves of $M^*$, which will
  project one-to-one on the horizontal plane, and we will conclude the
  graphical property of $M^*$.

  Let us call $\ell_i^*\subset\partial M^*$ the curves obtained by
  conjugation from $\ell_i\subset\partial M$, for any $i$. The
  correspondence between conjugate curves $\ell_i$ and $\ell_i^*$ is
  illustrated in~\cref{fig:thm4_7_proof}.

\begin{figure}[htb!]
    \centering
    \includegraphics[width=\linewidth]{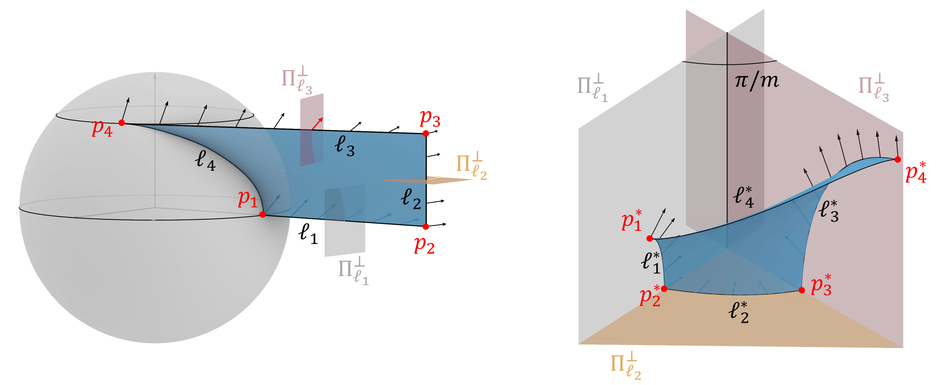}
    \caption{Conjugate boundaries: straight lines $\ell_i$ have their
      Gauss map in a plane $\Pi_{\ell_i}^{\perp}$ perpendicular to $\ell_i$.}
    \label{fig:thm4_7_proof}
\end{figure}

  Since $\ell_2$ is a vertical straight line (in particular, an
  asymptotic curve), $\ell_2 ^*$ is a curvature line of symmetry
  contained in a horizontal plane ($N^*$ is horizontal along~$\ell_2$)
  that can be assumed to be $\{z=0\}$ up to a vertical
  translation. Then $M^*$ can be extended by mirror symmetry with
  respect to that plane.  By Corollary~\ref{l:N} we know that the angle of
  $N^*$ along $\ell_2^*$ is a strictly monotone function, giving local
  convexity for $\ell_2 ^*$. Moreover, the total variation of the
  angle of $N$ along $\ell_2$ (and then of $N^*$ along $\ell_2^*$)
  coincides with $\frac\pi m$, the angle between the vertical planes
  $\{x=0\}$ and~$P_{\ell_3,z}$. In particular, we obtain that
  $\ell_ 2^*$ is embedded.

  We call $\Omega^*\subset\{z=0\}$ the Alexandrov-embedded domain over
  which $M^*$ is a multigraph. We know that $M^*$ arrives orthogonally
  to $\{z=0\}$ along $\ell_2^*$. By the maximum principle with
  vertical planes we deduce that $\ell_ 2^*$ is concave with respect
  to $\Omega^*$. (Since $N^*$ points upwards on $M$, it coincides with
  the outer conormal to $\Omega^*$ along $\ell_ 2^*$.)

  Arguing similarly using Proposition~\ref{p:y} we also obtain that
  $M^*$ is a multigraph over a domain   $\Omega_1^*\subset \Pi_{\ell_1}^{\perp}$
  (resp.   $\Omega_3^*\subset \Pi_{\ell_3}^{\perp}$) and that up to an isometry,
 $\ell_1^*\subset \Pi_{\ell_1}^{\perp}$(resp.     $\ell_3^*\subset \Pi_{\ell_3}^{\perp}$) is an
  embedded line of curvature concave with respect to the projection
  $\Omega_ 1^*$ (resp. $\Omega_3^*$) of $M$ over 
 $\Pi_{\ell_1}^{\perp}$
  (resp.~$\Pi_{\ell_3}^{\perp}$). Moreover, $M^*$ can be extended by mirror
  symmetry about the vertical plane $\Pi_{\ell_1}^{\perp}$ (resp.~$\Pi_{\ell_3}^{\perp}$). 
  (The fact that the total variation of the angle of $N^*$ along $\ell_2^*$
  coincides with the angle $\frac \pi m$ between the vertical planes $\{x=0\}$
  and~$P_{\ell_3,z}$ justifies that we may take
  $\Pi_{\ell_1}^{\perp}=\{y=0\}$ and $\Pi_{\ell_3}^{\perp}=P_{p_4,z}$.)
  
  Finally, since $M$ and $M^*$ are isometric, we know that both
  $\ell_1^*$ and $\ell_3^*$ intersect $\ell_2^*$ orthogonally, and we
  can assume they both lie in $\{z>0\}$ near $\ell_2^*$. Moreover, $\ell_1^*$ and
  $\ell_3^*$ start projecting to the concave side of $\ell_2^*$. Since
  along $\ell_1^*$ (resp. $\ell_3^*$) the Gauss map $N^*$ is contained
  in the vertical plane containing the curve, it is monotone, its
  total variation is less than $\frac \pi 2$, and it becomes
  horizontal only when intersecting $\ell_2^*$, we deduce that $\ell_1^*,\ell_3^*\subset \{z>0\}$ and that the
  vertical projection of $\ell_1^*$ (resp. $\ell_3^*$) is one-to-one.

It just remains to study the behaviour of the curve $\ell_4^*$.  We recall that $\ell_4$ is a line of curvature of $M$ located on the sphere, with $\tau_n=0$ and $\kappa_g=1$. Hence $\ell_4^*$ is an asymptotic curve of $M$ ($\kappa_n^*=0$) and has constant geodesic curvature $\kappa_g^*=1$. Therefore, $\ell_4^*$ has constant curvature and the curvature vector of $\ell_4^*$ coincides with the outer conormal of $M$ along $\ell_4^*$. Moreover, the vertical coordinate of the normal $N$ along $\ell_4$ (and so of $N^*$ along $\ell_4^*$) never vanishes by Lemma~\ref{l:Nhoriz}. Thus the projection of $\ell_4^*$ is concave with respect to the Alexandrov-embedded domain $\Omega^*$.

Let $W^*$ be the open convex wedge region bounded by $\Pi_{\ell_1}^{\perp}$ and $ \Pi_{\ell_3}^{\perp}$. There are neighborhoods of $\ell_1^*$ and $\ell_3^*$ where the surface $M^*$ is contained in $W^*$ (as these lines are planar lines of curvature and the surface is contained in $W^*$ close to $p_2^*$ and $p_3^*$). In particular, there exist neighborhoods of $p_1^*$ and $p_4^*$ where the surface $M^*$ is contained in $W^*$. Then $\ell_4^*$ is contained in $W^*$ near its endpoints.

\begin{figure}[htb!]
    \centering
    \includegraphics[width = \textwidth]{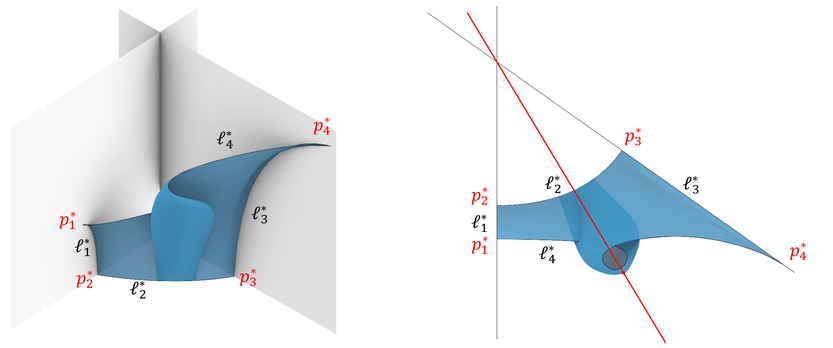}
    \caption{This picture shows the convex wedge region  $W^*$ that contains $M^*$ in
      neighborhoods of $\ell_1^*$ and $\ell_3^*$. The red line
      corresponds to the vertical projection of the vertical plane
      $P$. In this picture the farthest point of $M^*$ from the
      $z$-axis would be interior. }
    \label{fig:concave}
  \end{figure}
  
Now take a vertical plane $P$ passing through the origin and intersecting $W^*$, as illustrated in~\cref{fig:concave}. We consider the farthest point $q$ of $M^*\cap P$ from the $z$-axis. We observe that this point cannot be interior to $M^*$, as the Gauss map at that point would be horizontal, in contradiction with the fact that $M^*$ is a vertical multigraph. This point $q$ cannot lie in $\ell_2^*$ either, as $\ell_2^*$ is concave with respect to $\Omega^*$ and it arrives orthogonally to $ \Pi_{\ell_1}^{\perp}$ and $ \Pi_{\ell_3}^{\perp}$.  Hence $q$ must be a point in~$\ell_4^*$. 

Let us prove there cannot be more points in $P\cap\ell_4^*$: suppose there exists a different point $q'$ in $P\cap\ell_4^*$. Since the vertical projection $\pi(\ell_4^*)$ of $\ell_4^*$ is concave with respect to $\Omega^*$, in order to go from $\pi(q)$ to $\pi(q')$ along $\pi(\ell_4^*)$ we should intersect $P$ at a point farther away from the origin than $\pi(q)$, a contradiction.  Therefore, we obtain that any vertical plane in $W^*$ containing the $z$-axis necessarily intersects  $\ell_4^*$ in exactly one point. We conclude that $\ell_4^*\subset W^*$ must be embedded with injective vertical projection, and $M^*\subset W^*$ is a vertical graph.

To finish the proof of the existence of the corresponding tensile
catenoid, we only need to extend $M^*$ by reflection symmetry on the
planes containing the boundary curves $\ell_1^*,\ell_2^*$ and
$\ell_3^*$ successively. To obtain the embeddedness of the tensile catenoid it remains to prove $M^*\subset\{z>0\}$. Since $\Gamma^*\subset\{z\geq 0\}$, it suffices to prove $\ell_4^*\subset\{z>0\}$. Assume that $M^*\subset\{z<0\}\neq \emptyset$. By the maximum principle, the function $p \to z(p)$ takes its minimum value at a boundary point $p^*\in\ell_4^*$ and at this point the third coordinate of the curvature vector of $\ell_4^*$ is negative (i.e. $h(p^*):=\langle \vec k(p^*),e_3\rangle<0$). Since $h(p_1^*)>0$ and the function $h$ changes sign along $\ell_4^*$, there is a point $q^*_0\in\ell:4^*$ where $h(q^*_0)=0$ and $h$ changes sign around $q^*_0$. By conjugacy properties, at the point $q_0\in\ell_4$ corresponding to $q^*_0$, the tangent vector of $\ell_4\subset M$ is horizontal (rotating by $\pi$ the curvature vector into the tangent plane gives a tangent vector of $\ell_4$). Hence the horizontal plane $P(q_0)$ passing through $q_0$ is tangent to $\ell_4 \subset M$. Since the third coordinate of the curvature changes sign, there is a subarc of $\ell_4$ where the third coordinate of the tangent vector changes sign. Hence the height of the curve $\ell_4$ has a local maximum at $q_0$. In particular this implies that $\ell_4$ is not a rotational graph, contradicting the Lemma \ref{l:rotation}.

Finally, we observe that by Remark~\ref{r:cusp}, $\ell_4^*$ and $\ell_3^*$ meet tangentially, which creates cusps in the final geometry of the boundary of the corresponding tensile catenoid.

We also know that the length of $\ell_2^*$ is $h\in(0,\cos\frac{\pi}{m})$, the length of $\ell_1^*$ is $a=\sqrt{1-h^2}\csc\frac{\pi}{m}-1$ and the length of $\ell_3^*$ is $\sqrt{(1+a)^2-(1-h^2)}=\sqrt{1-h^2}\,\cot\frac{\pi}{m}$. We observe that these lengths are not invariant under homotheties of the space, so an annulus in the family cannot be obtained from another by applying a homothety. Thus $h$ is a parameter in the family. The neck length of the corresponding tensile catenoid is equal to $\rho=2mh$, so $\rho$ can also be considered as the family parameter.

When $h$ goes to zero, $M$ is contained in the horizontal plane $\{z=0\}$, and the tensile catenoids converge to the double covering of a piece of the plane bounded by $m$ circular arcs that meet tangentially. As $h$ goes to $\cos\frac{\pi}{m}$, the length of $\ell_1^*$ goes to zero, and the tensile catenoids converge to the union of $m$ minimal disks bounded by four arcs of constant curvature, two consecutive ones joined by a common vertex.
\end{proof}

\subsection{Tensile $k$-noid}
\label{sub:k}

In this subsection we will prove, for any fixed $k\geq 3$, the
existence of the so-called tensile $k$-noids appearing in
Theorem~\ref{th:k}, that is, minimal surfaces with the topology of the
$k$-noids, each boundary component of which  consists of
four asymptotic lines of constant curvature joined by cusps.
This surface is illustrated in~\cref{fig:knoid_symetries}.

\begin{th**}
  For any $k\geq 3$, there exists an embedded minimal surface with the topology of a sphere minus $k$ disks, called {\rm tensile $k$-noid}. Any boundary curve of this surface is the union of four arcs, any one of them an asymptotic line of the surface with constant curvature, two consecutive ones meeting forming a cusp (\textit{i.e.} they form a corner on the surface of angle zero). Moreover, the tensile $k$-noid is symmetric with respect to a horizontal plane dividing the surface into two vertical graphs; and it also has $k$ vertical planes of symmetry meeting at an angle $\frac{\pi}{k}$.
\end{th**}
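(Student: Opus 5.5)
The plan is to repeat the Plateau-conjugate construction of Subsection~\ref{sub:cat} with a different rigid polygon $\Gamma$. The polygon is chosen so that the conjugate fundamental piece $M^*$ is exactly $\frac{1}{4k}$ of the tensile $k$-noid. This piece is cut out by three symmetry planes:
\begin{enumerate}
\item the horizontal plane $\{z=0\}$;
\item the vertical plane through the centre of one end;
\item the adjacent vertical bisector plane between two consecutive ends, which makes an angle $\frac{\pi}{k}$ with the previous one.
\end{enumerate}
The piece must contain exactly one of the four boundary arcs of an end, and that arc must have cusps at \emph{both} endpoints. In the catenoid case only one segment was tangent to $\mathbb{S}^2$ (namely $\ell_3$), and this is what produced a cusp at $p_4$ only. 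Here I would therefore take two sphere-tangent segments in $\Gamma$.

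\textbf{Choice of $\Gamma$.} Let $p_1=(0,1,0)$ and let $\ell_1$ be the vertical segment from $p_1$ to $p_2=(0,1,c)$; it is tangent to $\mathbb{S}^2$ at $p_1$. Next, $\ell_2$ is a horizontal segment from $p_2$ to a point $p_3$ at height $c$. Finally, $\ell_3$ is a horizontal segment from $p_3$ to a point $p_4\in\mathbb{S}^2\cap\{z=c\}$, tangent to that circle at $p_4$, exactly as in Subsection~\ref{sub:cat}. The direction of $\ell_2$ is chosen so that the vertical planes orthogonal to $\ell_2$ and to $\ell_3$ meet at angle $\frac{\pi}{k}$.

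\textbf{Properties of $M$.} Theorem~\ref{th:semifree} gives the minimal disk $M$ with free boundary $\ell_4\subset\mathbb{S}^2$. By Claim~\ref{cl:curvatureline}, $\ell_4^*$ is an asymptotic line of constant curvature $1$. By Remark~\ref{r:cusp}, applied now at both $p_1$ (where $\ell_1$ is tangent to the sphere) and $p_4$, the arc $\ell_4^*$ meets $\ell_1^*$ and $\ell_3^*$ tangentially, which gives a cusp at each end of the arc. Since it suffices to produce one surface, I would fix convenient values of the free parameters, keeping $\Gamma$ outside $\mathbb{B}$. I would then redo the chain of Lemma~\ref{l:ConvexHull}, Lemma~\ref{l:rotation}, Proposition~\ref{p:z}, Corollary~\ref{l:N}, Proposition~\ref{p:y} and Lemma~\ref{l:Nhoriz}:
\begin{itemize}
\item Lemma~\ref{l:CHull} places $M$ in a wedge bounded by planes through the origin.
\item $M$ lies below the plane through the origin containing $\ell_3$, and above or beside the plane containing $\ell_1$ and the origin.
\item Proposition~\ref{p:graph} applies with $\Pi=\{z=0\}$ and with $\Pi$ orthogonal to $\ell_2$ and to $\ell_3$.
\item The Gauss map is monotone along each $\ell_i$ and is never horizontal along $\ell_4$.
\end{itemize}

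\textbf{Conjugation and assembly.} The vertical segment $\ell_1$ conjugates to a planar line of curvature $\ell_1^*$ in a horizontal plane, which I normalise to $\{z=0\}$. The segments $\ell_2$ and $\ell_3$ conjugate to curves in vertical planes. The total rotation of $N$ along $\ell_1$ equals the angle between the directions of $\ell_2$ and $\ell_3$, so these two vertical planes meet at angle $\frac{\pi}{k}$. From here the argument follows the end of the proof of Theorem~\ref{th:cat}:
\begin{itemize}
\item use convexity of each $\ell_i^*$ to place $M^*$ in the wedge $W^*$;
\item use the farthest-point argument with vertical planes through the $z$-axis to show that $\ell_4^*$ projects injectively, so $M^*$ is a vertical graph;
\item use the sign argument on $\langle\vec k,e_3\rangle$ along $\ell_4^*$, together with Lemma~\ref{l:rotation}, to get $M^*\subset\{z>0\}$.
\end{itemize}
Reflecting across the three planes generates $4k$ copies, giving genus zero with $k$ boundary components. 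Each component consists of four copies of $\ell_4^*$ meeting in cusps at the horizontal plane and at the end plane. Embeddedness follows because each copy is a graph inside its own wedge.

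The main obstacle I expect is choosing $\Gamma$ so that all hypotheses (a)--(d) of Proposition~\ref{p:graph} hold simultaneously for the three projection planes. The most delicate parts are the wedge in (b) and the separating plane $P_0$ in (c), since now two segments touch $\mathbb{S}^2$ and $\Gamma$ is no longer confined to a thin region near one sphere-tangent line. I would first try $c$ small and $\ell_2$ short, checking (c) by the same plane-rotation trick used in Lemma~\ref{l:ConvexHull}.
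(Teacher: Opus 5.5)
Your overall strategy is the paper's: one vertical and two horizontal segments, two of them tangent to $\mathbb{S}^2$ so that $\ell_4^*$ has a cusp at each end, then conjugation and reflection in a horizontal plane and two vertical planes at angle $\frac{\pi}{k}$. In fact, after $z\mapsto -z$ and reversing the labels, the paper's polygon is the degenerate member $c=1$ of your family. There the horizontal tangent segment ends at the pole, $\ell_2$ is parallel to $e_1$, and $|\ell_1|=1$, $|\ell_2|=\cot\frac{\pi}{k}$, $|\ell_3|=\csc\frac{\pi}{k}$.

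\textbf{First gap: $\Gamma$ is never fixed.} The regime you suggest ($c$ small, $\ell_2$ short) violates the hypotheses you plan to use. For $0<c<1$ put $r=\sqrt{1-c^2}$, and let $A,B,C$ be the vertical projections of $p_2,p_3,p_4$, so $|A|=1$, $|C|=r$ and $BC\perp OC$.
\begin{itemize}
\item Hypotheses (b) and (c) of Proposition~\ref{p:graph} for $\Pi=\{z=0\}$ need $B$ in the sector between the rays $OA$ and $OC$, and on the other side of the line $AC$ from $O$. Hence $OCBA$ is convex, and the diagonal $BO$ splits the angle $\angle ABC=\frac{\pi}{k}$. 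Since $\tan\angle OBC=r/|BC|$, this gives $|\ell_3|\ge r\cot\frac{\pi}{k}$.
\item Hypothesis (b) for the plane orthogonal to $\ell_3$ needs $\Gamma$ on one side of the plane through $O$ containing $\ell_3$. Its upward normal is $n=-c\,\hat C+r\,e_3$ with $\hat C=C/r$. We have $\langle n,p_1\rangle=-c\langle A,\hat C\rangle$ and $\langle n,p_2\rangle=c(r-\langle A,\hat C\rangle)$, and convexity of $OCBA$ gives $\langle A,\hat C\rangle<r$. This forces $\langle A,\hat C\rangle\le 0$, i.e.\ $|\ell_2|\sin\frac{\pi}{k}\ge r$.
\end{itemize}
So $\Gamma$ lies \emph{above} that plane, not below as you assert. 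For small $c$, $\ell_2$ has length at least about $\csc\frac{\pi}{k}>1$, and both $\ell_2,\ell_3$ grow like $k/\pi$. The paper avoids these constraints by taking $c=1$. Then one endpoint of $\ell_4$ lies on the $z$-axis, the wedge $W_z$ and the separating plane in (c) both pass through the origin, and the inequalities become vacuous.

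\textbf{Second gap: the conjugate step does not transfer verbatim from Theorem~\ref{th:cat}.} There the horizontal symmetry curve joined the two vertical planes and met both orthogonally, so concavity alone kept it in $W^*$. Here the horizontal curve $\ell_1^*$ runs from the bisector plane to the free cusp $p_1^*$. You must show that it, and $\ell_4^*$ near $p_1^*$, stay in $W^*$ and cannot carry the farthest point in the vertical half-plane argument.

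What governs this is the turning of $\ell_1^*$, which equals the rotation of $N$ along $\ell_1$. That rotation goes from $N(p_1)=\pm e_1$ (because $T_{p_1}M=\{x=0\}$) to $N(p_2)\perp\ell_2$, so it is the angle between $\ell_2$ and $e_2$. Your claim that it equals the angle between $\ell_2$ and $\ell_3$ is a leftover from the catenoid, where the vertical segment sat between the two horizontal ones. Here the dihedral angle of $W^*$ comes directly from the interior angle $\frac{\pi}{k}$ of $\Gamma$ at $p_3$, which conjugation preserves, and it says nothing about where $\ell_1^*$ ends.

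The paper normalizes $\ell_2\perp T_{p_1}M$ (in your labels), which makes this turning exactly $\frac{\pi}{2}$. By monotonicity of $N$ along the vertical segment, the cusp is then the farthest point of that curve from the bisector plane, and $N^*$ there is orthogonal to that plane. This is then combined with the sub-wedge $\tilde W^*$ and the convex region $D$ to run the farthest-point argument. Your proposal contains neither this normalization nor this extra step.
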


\begin{figure}
    \centering
    \includegraphics[width=0.8\linewidth]{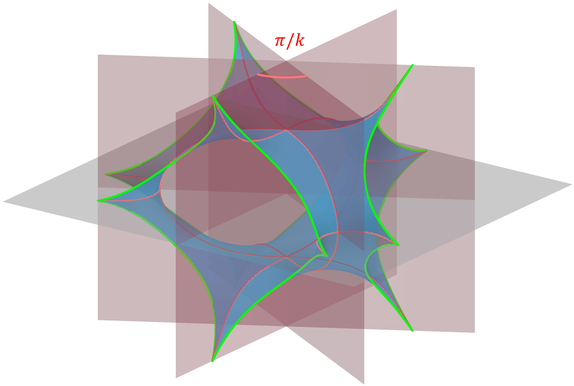}
    \caption{$k$-noid with $k=3$ boundary components.}
    \label{fig:knoid_symetries}
\end{figure}

As in the case of the tensile catenoids, these examples will be constructed by symmetry from a fundamental piece obtained as the conjugate surface of the solution to a partially-free boundary problem. Let us start by describing that problem.

Fix $k\geq 3$. For any $a>1$, let us now consider the points $p_1=(0,0,-1)$,
$p_2=(0,a,-1)$, $p_3=(\cos(\frac \pi k), \sin(\frac \pi k),-1)$ and
$p_4=(\cos(\frac \pi k), \sin(\frac \pi k),0)$. We call $\ell_i$, for
$i\in\{1,2,3\}$, the straight segment joining $p_i$ to
$p_{i+1}$. Taking $a=\csc\frac\pi k$, the segment $\ell_2+(0,0,1)$
arrives tangentially to the equator of $\s^2$ and $\ell_3$ arrives
tangentially to the unit sphere, see~\cref{fig:configuration_k}. We call $\Gamma=\ell_1\cup
\ell_2\cup \ell_3$.

We observe that this construction does not work for $k=2$, so no tensile catenoid can be provided with this construction.

\begin{figure}[htb!]
  \centering \includegraphics[width=\textwidth]{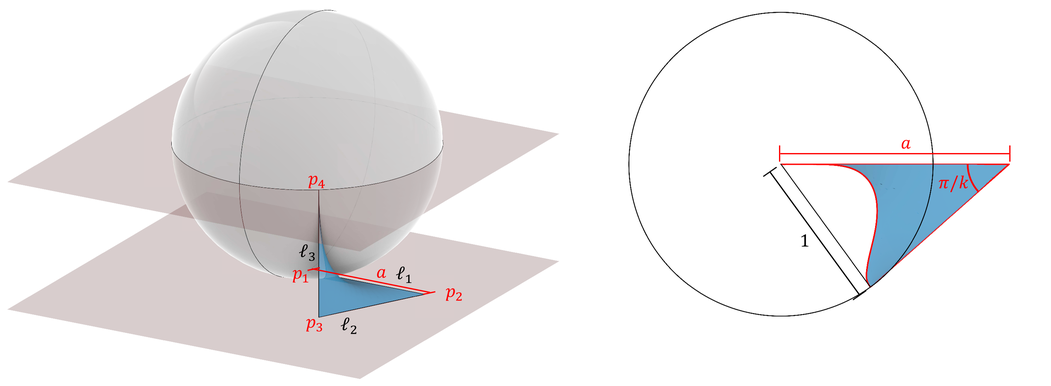}
  \caption{The piecewise smooth curve $\Gamma=\ell_1\cup \ell_2\cup \ell_3$ and its vertical projection in $\{z=0\}$.}
  \label{fig:configuration_k}
\end{figure}

By Theorem~\ref{th:semifree}, there exists an embedded minimal surface $M$ outside $\B$ with $\Gamma\subset\partial M$ and so that $M$ meets~$\s^2$ orthogonally along the smooth embedded curve $\ell_4 = \partial M-\Gamma\subset\s^2$. Moreover, $\ell_4$ is a line of curvature of $M$ with constant curvature (see Claim~\ref{cl:curvatureline}). We follow the steps in subsection~\ref{sub:cat}, so we start by describing the region containing the surface~$M$.

\begin{lemma}\label{l:ConvexHull_k}
For any $i\in\{1,2\}$ we denote by $P_{\ell_i,z}$ the vertical plane containing $\ell_i$, by $O$ the origin and we call $P_{a,c,b}$ the plane passing through the points $a,b$ and $c$.
\begin{enumerate}
    \item The interior of both $M$ and $\ell_4$ are contained in
      $W_z\cap\{-1<z<0\}$, with $W_z$ the (open convex) wedge region from
      $P_{O,p_3,p_4}$  to
      $P_{\ell_1,z}=\{x=0\}$ containing only points with positive $x$ and $y$ coordinates, as illustrated in~\cref{fig:knoid_wedge}.
    \item $M$ lies on one side of the vertical plane $P_{\ell_2,z}$.
    \item $M$ lies on one side of the vertical plane $\{x=\cos\frac\pi k\}$ containing $\ell_3$.
\end{enumerate}
\end{lemma}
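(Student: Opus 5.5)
We follow the proof of Lemma~\ref{l:ConvexHull}: use Lemma~\ref{l:CHull} for planes through the origin, and a maximum-principle argument at the free boundary for planes that miss it.

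\textbf{Item (1).} First we check that $\Gamma$ lies in the closure of $W_z\cap\{-1<z<0\}$.
\begin{itemize}
\item $\ell_1$ lies in $\{x=0,z=-1\}$, and $\ell_2$ lies in $\{z=-1\}$.
\item The endpoint $p_3$ of $\ell_2$ lies in the vertical plane $P_{O,p_3,p_4}$, which is the vertical plane at angle $\frac\pi k$ through the origin, and so does the vertical segment $\ell_3$.
\item Since $a=\csc\frac\pi k$, the segment $\ell_2$ projects to a segment tangent to the unit circle at the projection of $p_3$. Hence $\ell_2$ stays inside the wedge between $\{x=0\}$ and $P_{O,p_3,p_4}$.
\end{itemize}
Applying Lemma~\ref{l:CHull} to the three planes $\{x=0\}$, $P_{O,p_3,p_4}$ and $\{z=0\}$, all through the origin, places the interiors of $M$ and $\ell_4$ in the open wedge $W_z$ and in $\{z<0\}$. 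The plane $\{z=-1\}$ does not pass through the origin, so it needs a separate argument. Suppose points of $\overline M$ lie below $\{z=-1\}$ and take the lowest one. It cannot be interior (maximum principle) and cannot lie on $\Gamma\subset\{z\ge -1\}$, so it lies on $\ell_4$, in the open lower hemisphere. There the inner conormal of $M$ is the position vector, which has negative third coordinate. So moving into $M$ decreases $z$, contradicting minimality of the height. The strict inequality $z>-1$ in the interior then follows from the interior and boundary maximum principle, using that $\{z=-1\}$ meets the sphere only at the south pole $p_1$. If the plane-through-the-origin argument for $\{z=0\}$ is delicate near $p_4$ (because $\ell_3$ ends on the equator), the same farthest-point argument gives $M\subset\{z\le 0\}$ directly.

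\textbf{Items (2) and (3).} These copy the last step of Lemma~\ref{l:ConvexHull}. For $P_{\ell_2,z}$, $\Gamma$ lies on one closed side; one checks that $\ell_1$ and $\ell_3$ lie on the origin's side, using tangency of the projection of $\ell_2$ to the unit circle. Suppose $M$ had points on the far side. The farthest point of $\overline M$ from the plane lies on $\ell_4$. Since $P_{\ell_2,z}$ is tangent to the unit cylinder, the sphere lies entirely on the origin side, so $\ell_4$ cannot reach the far side. Hence no such point exists, and the boundary maximum principle gives strictness.

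For $\{x=\cos\frac\pi k\}$, the segments $\ell_1$ and $\ell_3$ lie in $\{x\le\cos\frac\pi k\}$, but $\ell_2$ is more delicate: we need its $x$-coordinate bounded by $\cos\frac\pi k$, which holds because $\ell_2$ runs from $x=0$ to $x=\cos\frac\pi k$. If $\overline M$ had points in $\{x>\cos\frac\pi k\}$, the farthest point $p$ would lie on $\ell_4$. By item (1), $p$ has $x,y>0$ and lies in $W_z$, so its position vector has positive $x$-component. The inner conormal at $p$ therefore has $\langle\overrightarrow{n_p},e_1\rangle>0$, giving interior points of $M$ even farther from the plane, a contradiction.

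\textbf{Main obstacle.} I expect the hardest step to be checking the sign conditions on the conormal at the hypothetical extremal points on $\ell_4$, especially near the cusp points $p_1$ and $p_4$, where the relevant inner products may degenerate. There we must use that extremality forces the point to be interior to $\ell_4$.
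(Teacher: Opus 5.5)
Your proposal is correct and follows essentially the paper's route. You apply Lemma~\ref{l:CHull} to the three planes $\{z=0\}$, $\{x=0\}$, $P_{O,p_3,p_4}$ through the origin, and use the maximum principle for $\{z=-1\}$, $P_{\ell_2,z}$ and $\{x=\cos\frac\pi k\}$. For these last three planes the paper simply notes that all of $\partial M$ already lies on one side, since $\s^2\subset\{z\ge -1\}$, $\s^2$ lies on the origin side of $P_{\ell_2,z}$, and $\s^2\cap\overline{W_z}\subset\{x\le\cos\frac\pi k\}$; so your farthest-point and conormal arguments at $\ell_4$ are valid but unnecessary.
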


\begin{figure}[htb!]
    \centering
    \includegraphics[width=0.7\linewidth]{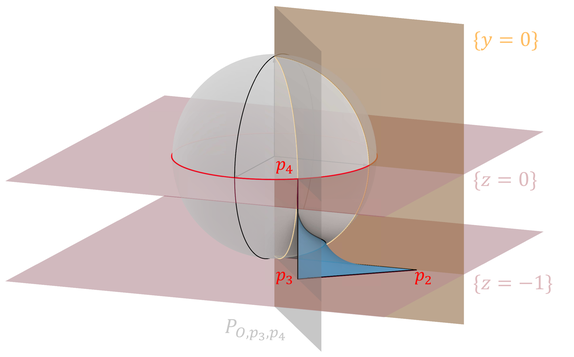}
    \caption{The piecewise smooth curve $\Gamma=\ell_1\cup\ell_2\cup\ell_3$ together with a curve $\ell_4$ in $\s^2$ joining the
endpoints of $\Gamma$ and a wedge containing $M$ appearing in Lemma~\ref{l:ConvexHull_k}.
}
    \label{fig:knoid_wedge}
\end{figure}

\begin{proof}
  First we observe that $\Gamma$ is contained in the closure of $W_z\cap\{z<0\}$. Then Lemma~\ref{l:CHull} applied to $\{z=0\}$, $P_{O,p_3,p_4}$ and $\{x=0\}$, ensures that the interior of both $M$ and $\ell_4$ are contained in $W_z\cap\{z<0\}$. We also observe that $\partial M\subset \Gamma\cup\s^2 \subset\{z\geq -1\}$ and that $\partial M$ lies on one side of~$P_{\ell_2,z}$, so we conclude items {\it 1} and {\it 2} by using the maximum principle.
Since $\ell_4\subset\s^2\cap W_z$ is contained in the same side of the vertical plane $\{x=\cos\frac\pi k\}$ as $\Gamma$, we deduce item {\it 3} by the maximum principle.         
\end{proof}

\begin{remark}\label{rem:T}
  The convex hull of the vertical projection of $p_1,p_2,p_3,p_4$ over $\{z=0\}$ is a triangle that contains the vertical projection of $M$ by Lemma~\ref{l:ConvexHull_k}. In particular, the vertical projection of $\ell_4$ lies in the interior of $T$.
\end{remark}

Using the previous lemma, we now prove as in Subsection~\ref{sub:cat} that $M$ is a graph in the directions orthogonal to the segments in $\Gamma$. We observe that, since $\ell_2+(0,0,1)$ arrives tangentially to the equator of $\s^2$, we get that $P_{O,p_3,p_4}$ coincides with the vertical plane orthogonal to $\ell_2$ and passing through the origin.

\begin{proposition}\label{p:z_k}\ 
  The minimal disk $M$ is an orthogonal graph over $\{y=0\}$ (resp. $P_{O,p_3,p_4}$ and $\{z=0\}$), and there are no two points in $\ell_1$ (resp. $\ell_2$ and $\ell_3$) with the same Gauss map.
\end{proposition}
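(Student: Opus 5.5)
We verify the hypotheses of Proposition~\ref{p:graph} three times, with $\Pi=\{y=0\}$, $\Pi=P_{O,p_3,p_4}$ and $\Pi=\{z=0\}$. These are the planes through the origin orthogonal to $\ell_1$, $\ell_2$ and $\ell_3$ respectively; recall that $P_{O,p_3,p_4}$ is orthogonal to $\ell_2$ since $\ell_2+(0,0,1)$ is tangent to the equator. The Gauss map statement then follows as in Corollary~\ref{l:N}. If two points $q_1,q_2\in\ell_i$ had the same Gauss map, translate $M$ along $\ell_i$ so that $q_1$ goes to $q_2$. By the boundary maximum principle the translate and $M$ meet along a curve emanating from $q_2$. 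This produces two distinct interior points of $M$ with the same orthogonal projection onto the plane orthogonal to $\ell_i$, contradicting the graph property.

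\textbf{Hypotheses (a) and (d).} Hypothesis (a) follows from Remark~\ref{r:convex} in each case. The projection of $\Gamma$ is two segments, and together with the chord joining its endpoints it bounds a triangle, which is convex. For (d), $\ell_4$ lies in $\s^2\cap W_z\cap\{-1<z<0\}$ by Lemma~\ref{l:ConvexHull_k}, which lies in an open hemisphere bounded by each of the three planes. Hence $\ell_4$, being embedded, projects injectively. The remaining two segments of $\Gamma$ project onto segments. The side conditions in Lemma~\ref{l:ConvexHull_k} (sides of $\{x=0\}$, $P_{\ell_2,z}$ and $\{x=\cos\frac\pi k\}$), together with Remark~\ref{rem:T} in the vertical case, keep the projected $\ell_4$ off those segments. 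So the projection of $\partial M$ is a Jordan curve bounding a domain $\Omega$.

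\textbf{Hypotheses (b) and (c), case by case.}
\begin{itemize}
\item For $\Pi=\{z=0\}$: take $W=W_z$, whose edge is the $z$-axis and whose faces contain $p_1$ and $p_4$. For (c), the vertical plane through $p_1$ and $p_4$ contains the origin, and $\Gamma$ lies on one closed side of it, not entirely inside it. This matches the non-strict form of (c).
\item For $\Pi=\{y=0\}$: use the wedge bounded by $\{x=0\}$ and $\{z=0\}$. Its edge is the $y$-axis, which is orthogonal to $\Pi$ and passes through the origin. Its faces contain $p_1=(0,0,-1)$ and $p_4\in\{z=0\}$. For (c), take the plane containing the $y$-direction through $p_1$ and $p_4$; it should leave $\Gamma$ on the side away from the origin.
\item For $\Pi=P_{O,p_3,p_4}$: use the wedge bounded by $\{z=0\}$ and $P_{\ell_2,z}$, or by $\{z=0\}$ and the plane through the $\ell_2$-direction and the origin. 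We need its edge to pass through the origin in the direction of $\ell_2$. For (c), take the plane orthogonal to $P_{O,p_3,p_4}$ through $p_1$ and $p_4$, i.e.\ the plane containing the $\ell_2$-direction through $p_1$ and $p_4$.
\end{itemize}
In the last two cases one also has to check that $M$ lies in the chosen wedge. For this I would prove, as in Lemma~\ref{l:ConvexHull}, that $M$ lies on one side of the plane through the origin, $p_1$ and $p_4$ containing the relevant direction. This follows from Lemma~\ref{l:CHull} by rotating a coordinate plane about an axis while keeping $\Gamma$ on one side.

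\textbf{Main obstacle.} The hard step is (b)/(c) in the $\ell_2$ direction. The endpoint $p_1$ is at height $-1$ and not in the expected face, so the wedge must be chosen carefully, with an edge through the origin parallel to $\ell_2$ and faces through $p_1$ and through $p_4$. One must then verify, via Lemma~\ref{l:CHull}, that $M$ lies inside it. I would first try the plane spanned by $(0,1,0)$ and $p_1$ (i.e.\ $\{x=0\}$) together with the plane spanned by $(0,1,0)$ and $p_4$, and check the signs of $\Gamma$ explicitly.
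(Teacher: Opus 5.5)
Your overall strategy is the paper's. You apply Proposition~\ref{p:graph} with $\Pi=\{y=0\}$, $P_{O,p_3,p_4}$ and $\{z=0\}$. You get (a) from Remark~\ref{r:convex}, and (c) from the planes through $p_1,p_4$ containing the relevant direction; these are the paper's $P_{p_1,p_2,p_4}$ and $P_{p_1,\tilde p_2,p_4}$. You get (d) from Lemma~\ref{l:ConvexHull_k}, and you finish with the translation argument of Corollary~\ref{l:N}. Your treatment of $\{z=0\}$ and $\{y=0\}$ is fine.

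\textbf{The gap.} Hypothesis (b) for $\Pi=P_{O,p_3,p_4}$ is not established. This is exactly the step you single out, and the candidates you write down do not work.
\begin{itemize}
\item The wedge bounded by $\{z=0\}$ and $P_{\ell_2,z}=\{x\cos\frac\pi k+y\sin\frac\pi k=1\}$ has as edge the tangent line to the equator at $p_4$. That edge misses the origin, and neither face contains $p_1$.
\item Your concrete first try, $\{x=0\}$ with $\{z=0\}$, has the $y$-axis as edge. This is the wedge for the $\ell_1$ case. It fails here because $\{x=0\}$ is not orthogonal to $P_{O,p_3,p_4}$, whose normal is the $\ell_2$-direction $(\sin\frac\pi k,-\cos\frac\pi k,0)$.
\item Your plan to get containment from ``the plane through the origin, $p_1$ and $p_4$ containing the relevant direction'' is ill-posed in this configuration. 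The points $O,p_1,p_4$ already span $P_{O,p_3,p_4}$ itself, which contains neither $(0,1,0)$ nor the $\ell_2$-direction. That recipe is carried over from the catenoid case, where $p_1=(0,1,0)$ lies on the edge of the wedge.
\end{itemize}

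\textbf{The fix.} The two faces must be chosen separately.
\begin{itemize}
\item The face through $p_1$ is the vertical plane $P_{O,p_3,p_4}^{\perp}=\{x\cos\frac\pi k+y\sin\frac\pi k=0\}$. It contains the $z$-axis, hence $p_1$, and the $\ell_2$-direction.
\item The face through $p_4$ is $\{z=0\}$.
\end{itemize}
These two planes meet along the line through $O$ parallel to $\ell_2$, as (b) requires.

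No new convex-hull argument is needed to show $M$ lies in this wedge. On $W_z$ the polar angle lies in $(\frac\pi k,\frac\pi2)$, so $x\cos\frac\pi k+y\sin\frac\pi k>0$ there. Lemma~\ref{l:ConvexHull_k} gives $M\subset W_z\cap\{z<0\}$, which lies inside the wedge. In the same way, containment in $\{x>0,z<0\}$ for $\Pi=\{y=0\}$ is immediate from Lemma~\ref{l:ConvexHull_k}.

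With this wedge your argument goes through and coincides with the paper's.
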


\begin{figure}[htb!]
    \centering
    \includegraphics[width=0.32\linewidth]{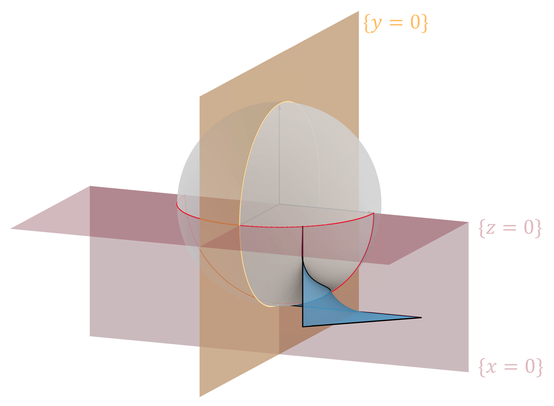}
    \hfill
    \includegraphics[width=0.32\linewidth]{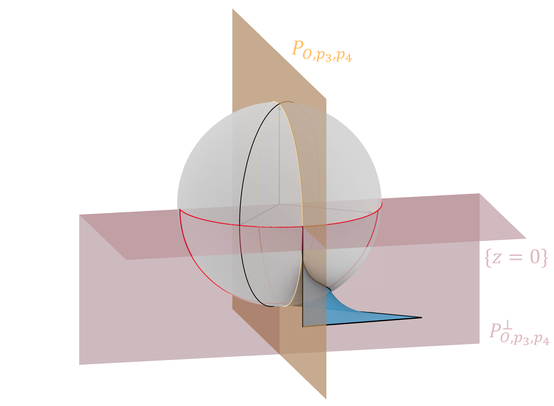}
    \hfill
    \includegraphics[width=0.32\linewidth]{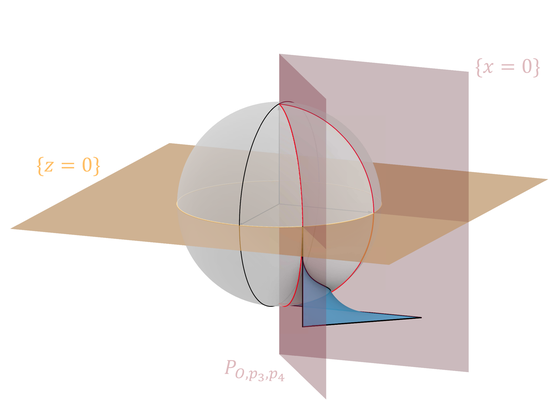}
    \caption{Perspective view of the planes delimiting the surface.}
    \label{fig:knoid_planes}
\end{figure}

\begin{proof}
  We use Proposition~\ref{p:graph} to prove this result, as the three
  planes pass through the origin. We have constructed $\Gamma$ as the
  union of three independent straight segments so that $(a)$ holds by Remark~\ref{r:convex}.

  Let us check that $(b)$ is satisfied in the three cases, illustrated in~\cref{fig:knoid_planes}:
  \begin{itemize}
  \item When $\Pi=\{z=0\}$, the wedge is $W_z$ defined in Lemma~\ref{l:ConvexHull_k}.
   \item By Lemma~\ref{l:ConvexHull_k}, $M$ lies below $\{z=0\}$ and on one side of $\{x=0\}$. Hence, for $\Pi=\{y=0\}$, consider the wedge containing $M$ defined by the other two coordinate planes.
   \item Let us call $P_{O,p_3,p_4}^{\perp}$ the vertical plane
     orthogonal to $P_{O,p_3,p_4}$ and passing through the origin. The
     intersection of $P_{O,p_3,p_4}^{\perp}$ with $\{z=0\}$ is the
     translated copy by vector $-p_3$ of the straight line containing
     $\ell_2$.  Since $W_z$ (defined in Lemma~\ref{l:ConvexHull_k}) lies
     on one side of $P_{O,p_3,p_4}^{\perp}$, one of the convex wedges
     determined by$P_{O,p_3,p_4}^{\perp}$ and $\{z=0\}$ contains
     $M$. This is a wedge satisfying hypothesis~$(b)$ of
     Proposition~\ref{p:graph} for $\Pi=P_{O,p_3,p_4}$.
 \end{itemize}

 The plane appearing in $(c)$ of Proposition~\ref{p:graph} for
 $\{z=0\}$ is $P_{O,p_3,p_4}$, that contains the
 origin.  For the plane $\{y=0\}$ (resp. $P_{O,p_3,p_4}$), the
 corresponding plane is $P_{p_1,p_2,p_4}$ (resp.
 $P_{p_1,\tilde p_2,p_4}$, with $\tilde p_2=p_2+(0,0,1)$); in both
 cases the plane leaves $\Gamma$ below and the origin above. Hence
 hypothesis $(c)$ is satisfied.

All that remains is to check that the hypothesis $(d)$ is fulfilled in
the three cases.  
By Lemma~\ref{l:ConvexHull_k}, $\ell_4$ is contained in a hemisphere of $\s^2$ on one side of $\{y=0\}$ (resp. $P_{O,p_3,p_4}$ and $\{z=0\}$), where the corresponding projection is injective. Hence, the orthogonal projection of $\ell_4$ over $\{y=0\}$ (resp. $P_{O,p_3,p_4}$ and $\{z=0\}$) is injective.

Since we know from Lemma~\ref{l:ConvexHull_k} that $M$ lies on one
side of the planes perpendicular to $\{y=0\}$ (resp. $P_{O,p_3,p_4}$; $\{z=0\}$) that contain $\ell_2$ and $\ell_3$ (resp.  $\ell_1$ and
$\ell_3$; $\ell_1$ and $\ell_2$), we get that up to $\ell_1$
(resp. $\ell_2$; $\ell_3$) the boundary of $M$ projects injectively
on $\{y=0\}$ (resp. $P_{O,p_3,p_4}$; $\{z=0\}$), and its projection
bounds a domain. Therefore, hypothesis $(d)$ is satisfied, and we
conclude that $M$ is an orthogonal graph over $\{y=0\}$
(resp. $P_{O,p_3,p_4}$; $\{z=0\}$).

We finish the proof of the proposition arguing as in Corollary~\ref{l:N}.
\end{proof}

\begin{remark}\label{r:cusp_k}
  Since the position vector at $p_4$ is contained in $T_{p_4} M$ (as $M$ is orthogonal to $\s^2$ along $\ell_4$) and $\ell_3$ arrives tangentially to $\s^2$ at $p_4$, we get that $T_{p_4} M=P_{O,p_3,p_4}$, the vertical plane containing $\ell_3$ and passing through the origin. Since $\ell_4'$ is tangent to $M$ at $p_4$ and orthogonal to the conormal of $M$ at $p_4$ (that is horizontal, by continuity along $\ell_4$), we deduce that $\ell_4$ arrives tangentially to $\ell_3$ at $p_4$.

  The same happens at $p_1$: Since the position vector at $p_1$ is contained in $T_{p_1} M$ (as $p_1$ is an endpoint of $\ell_4$) and $\ell_1$ is tangent to $\s^2$ at $p_1$, $T_{p_1} M= \{x=0\}$, and $\ell_4$ arrives tangentially to $\ell_1$ at $p_1$.
\end{remark}

Since $\Gamma$ can be separated by any plane in at most three
components and $\Gamma$ is contained on one side of
  $P_{O,p_1,p_4} = P_{O,p_3,p_4}$ by Lemma~\ref{l:ConvexHull_k}, Proposition~\ref{p:kg} says that the geodesic curvature of $\ell_4$ with respect to $\s^2$ never vanishes. Hence the geodesic curvature vector of $\ell_4$ in $\s^2$ points to the geodesic arc of $\s^2$ with endpoints $p_1,p_4$. Let us finally prove that the vertical coordinate of the Gauss map never vanishes along $\ell_4$.

\begin{lemma}\label{l:Nhoriz_k}
The Gauss map $N$ is never horizontal at points of $\ell_4$. 
\end{lemma}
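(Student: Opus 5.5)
I will follow the proof of Lemma~\ref{l:Nhoriz} for the tensile catenoid, adapted to the configuration of Subsection~\ref{sub:k}. Suppose there is a point $q\in\ell_4$ where $N(q)$ is horizontal, so the tangent plane $P=T_qM$ is vertical. Since $M$ meets $\s^2$ orthogonally along $\ell_4$, $T_qM$ is spanned by the position vector $q$ and $\ell_4'(q)$. Hence $P$ is a vertical plane through the origin, and in particular it contains the $z$-axis. By Remark~\ref{r:cusp_k}, the tangent planes at the endpoints $p_1$ and $p_4$ are $\{x=0\}$ and $P_{O,p_3,p_4}$. These are exactly the two boundary planes of the wedge $W_z$, so I may assume $q$ lies in the interior of $\ell_4$. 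Then $P$ is a vertical plane through the $z$-axis that meets the open wedge $W_z$ and separates $p_1$ from $p_4$.

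Next I will count the intersection curves of $P$ with $\overline M$. At $q$ the surface $M$ and the plane $P$ are tangent, so by the maximum principle at the boundary at least two curves of $P\cap \overline M$ emanate from $q$ into $M$ (compare Remark~\ref{r:transverse}). By the maximum principle these curves cannot close up into loops. Nor can two of them end at the same boundary point, or on the same segment of $\Gamma$ contained in $P$; but no segment of $\Gamma$ lies in $P$, since $P$ is vertical through the origin and meets the interior of $W_z$.

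I then need to know how $P$ meets $\Gamma$. The segment $\ell_1$ lies in $\{x=0\}$ and $\ell_3$ is vertical over $\pi(p_3)$ on the boundary plane $P_{O,p_3,p_4}$, so $P$ meets neither, except possibly at endpoints already excluded. The segment $\ell_2$ goes from $(0,a,-1)$ to $p_3$, and its projection crosses the wedge from one side to the other, so $P$ meets $\Gamma$ in exactly one point, on $\ell_2$. Consequently one of the curves emanating from $q$ must end at a point $q'\in\ell_4$ with $q'\neq q$. This produces a second point of $\ell_4$ on the same vertical half-plane through the origin, so the vertical projection of $\ell_4$ meets the ray $\pi(P)\cap W_z$ in two points.

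It remains to contradict this, and this is the step I expect to be the main obstacle, because it needs the analogue of the radial-graph statement in Proposition~\ref{p:z}. I would obtain it from Lemma~\ref{l:radial} with $\Pi=\{z=0\}$ and $W=W_z$. The hypotheses of Proposition~\ref{p:graph} for this plane were already checked in Proposition~\ref{p:z_k}. The extra condition of Lemma~\ref{l:radial}, that $\partial M$ does not meet the $z$-axis $L_W$, holds because $\Gamma$ stays away from the $z$-axis (note $p_1=(0,0,-1)$ lies on it, so some care is needed here; I would instead check it directly) and $\ell_4$ lies in the open wedge. The point $p_1$ is the concern. If Lemma~\ref{l:radial} cannot be applied verbatim because of $p_1$, I would argue directly as in its proof: a vertical plane through the $z$-axis meeting $\ell_4$ at least twice, after a small rotation, meets it transversally at least three times. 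Since it meets $\Gamma$ at most once, some component of $M-P$ then has boundary in $\ell_4\cup P$, contradicting Lemma~\ref{l:P}. Alternatively, the curve from $q$ to $q'$ already bounds, together with the arc of $\ell_4$ between $q$ and $q'$, a component of $M-P$ with boundary in $\s^2\cup P$, and Lemma~\ref{l:P} applies directly.
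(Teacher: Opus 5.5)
Your proposal is correct and follows the paper's proof step by step: vertical tangent plane through the origin, at least two curves of $P\cap\overline M$ leaving $q$, a single crossing of $\Gamma$ on $\ell_2$, hence a second point $q'\in\ell_4\cap P$, contradicting the radial-graph property obtained by re-running the proof of Lemma~\ref{l:radial} despite $p_1$ lying on the $z$-axis (and you are right that the statement must be read at interior points of $\ell_4$, since $N$ is horizontal at $p_1$ and $p_4$). One slip, which the paper shares: $p_1=(0,0,-1)$ lies on every vertical plane through the origin, so $P$ always contains $p_1$ rather than separating it from $p_4$, and a curve from $q$ could end at $p_1$, where the radial-graph property gives no contradiction; your closing alternative covers this case, because any curve from $q$ to a point of $\overline{\ell_4}$ (including $p_1$) cuts off a region of $M$ each of whose pieces in $M-P$ has boundary in $\s^2\cup P$, contradicting Lemma~\ref{l:P}, and this also makes the radial-graph step unnecessary.
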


\begin{proof}
Suppose there is a point $q\in \ell_4$ whose tangent plane $P$ (which we know passes through the origin) is vertical. In particular, the tangent vector of $\ell_4$ at $q$ is tangent to the great circle $\s^2\cap P$. By Remark~\ref{r:transverse}, $\ell_4$ has points on both sides of~$P$.

  Since $P$ and $M$ have contact order at least one, 
  $M\cap P$ contains at least two curves with $q$ as an endpoint. 
By
  Lemma~\ref{l:ConvexHull_k} we know that $P$ must intersect $W_z$, and
  it intersects $\Gamma$ at a single point.  By the maximum principle,
  $\overline M\cap P$ cannot contain any closed loop. In particular,
  one of the curves in $\overline M\cap P$ arrives at a point $q'$ in
  $\ell_4$, $q'\neq q$.

We observe that we cannot apply Lemma~\ref{l:radial} directly as $\partial M$ intersects the $z$-axis ($L_{W_z}$, with the notation in the lemma) at $p_1$. But since $\ell_1,\ell_4$ are asymptotic at $p_1$, $\ell_4$ intersects any half-plane in $W_z$ containing the $z$-axis, and the proof of Lemma~\ref{l:radial} also works
in this case. We conclude that the vertical projection of $\ell_4$ is a radial graph from the origin. Then $P$ can intersect $\ell_4$ at just one point, a contradiction.
\end{proof}

\medskip

\begin{figure}[htb!]
    \centering
    \includegraphics[width=\linewidth]{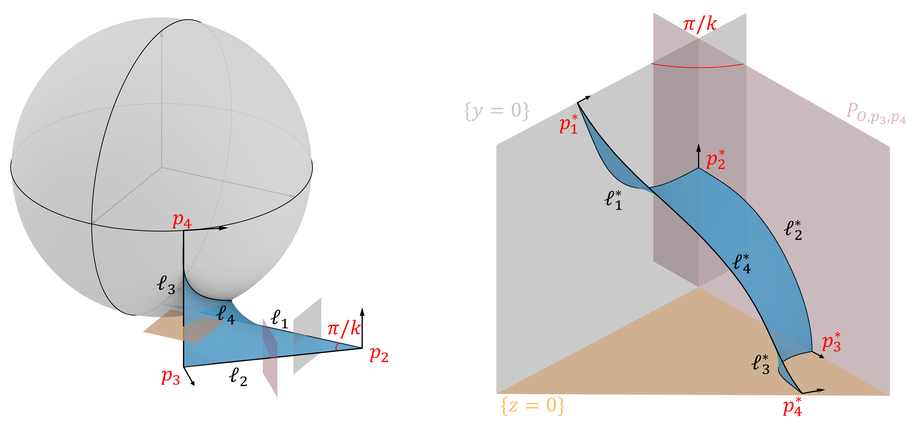}
    \caption{Conjugate surfaces and boundaries: straight lines $\ell_i$ have their Gauss map in a plane containing the line of curvature $\ell_i^*$.}
    \label{fig:knoid_proof_thm}
\end{figure}

Let us now prove Theorem~\ref{th:k}.~\cref{fig:knoid_proof_thm} illustrates the various steps of the proof.

\begin{proof}
  Let $M$ be the minimal surface constructed above for a fixed $k\geq 3$, and $M^*$ its conjugate surface. Since the Gauss map $N^*$ of $M^*$ coincides with the one of $M$ at conjugate points, we assume that it always points upwards by
  Proposition~\ref{p:z_k}. Hence $M^*$ is a multigraph over $\{z=0\}$ (also over $\{y=0\}$ and over $P_{O,p_3,p_4}$). We are going to prove that $M^*$ is in fact a vertical graph (in particular, embedded).

  Let us call $\ell_i^*\subset\partial M^*$ the curves obtained by conjugation from $\ell_i\subset\partial M$ for any $i$. As $\ell_1$ and $\ell_2$ are horizontal straight segments forming an angle $\frac\pi k$, then $\ell_1^*$ and $\ell_2^*$ are lines of symmetry contained in vertical planes forming an angle $\frac\pi k$, that can be assumed to be $\{y=0\}$ and $P_{O,p_3,p_4}$. 
  We also know that $\ell_3^*$ is a line of symmetry contained in a horizontal plane, which can be assumed to be $\{z=0\}$.

  Along $\ell_1^*$, the Gauss map is contained in $\{y=0\}$ and its third coordinate is strictly increasing (by Proposition~\ref{p:z_k}) coming from $0$ at $p_1^*$ to $1$ at $p_2^*$. Up to a possible symmetry on a horizontal plane, we can assume that the height function (the $z$ coordinate) is strictly decreasing along $\ell_1^*$ with this orientation. 

  When restricted to $\ell_2^*$, the Gauss map $N^*$ is contained in $P_{O,p_3,p_4}$ and its third component is strictly decreasing (Proposition~\ref{p:z_k}) coming from $1$ at $p_2^*$ ($N^*$ is vertical at $p_2^*$) to $0$ at $p_3^*$ ($\ell_2^*$ arrives orthogonally to $\{z=0\}$ at $p_3^*$). Thus, the height function is strictly monotone on $\ell_2^*$ with this orientation. If we extend $M^*$ by symmetry on the planes $\{y=0\}$ and $P_{O,p_3,p_4}$, we get that the height function cannot have a minimum on $p_2^*$ by the maximum principle, so the height function is also decreasing on $\ell_2^*$.

  The horizontal curve $\ell_3^*$ comes orthogonally from $P_{O,p_3,p_4}$ at $p_3^*$ and the angle that $N^*$ makes with some fixed horizontal direction is strictly monotone by Proposition~\ref{p:z_k}. The Gauss map $N^*$ and the direction of the straight segment $\ell_2$ are orthogonal at $p_3^*$ and tangent at $p_4^*$.
  
  We call $\Omega^*\subset\{z=0\}$ the Alexandrov-embedded domain over which $M^*$ is a multigraph. We know that $M^*$ arrives orthogonally to $\{z=0\}$ along $\ell_3^*$. By the maximum principle using vertical planes we deduce that $\ell_ 3^*$ is concave with respect to $\Omega^*$, and $N^*$ coincides with the outer conormal to $\Omega^*$ along $\ell_ 3^*$.
  
  Let $W^*$ be the open convex wedge region bounded by $\{y=0\}$ and $P_{O,p_3,p_4}$ so that $M^*$ is locally contained in $W^*$ in neighborhoods of $\ell_1^*$ and $\ell_2^*$. In particular, $\ell_3^*$ is locally contained in $W^*$ in a neighborhood of $p_3^*$.  
  
  We are going to prove that the whole curve $\ell_3^*$ is contained in $W^*$. Since there are no two points in $\ell_3$ with the same Gauss map and $N$ is orthogonal to $P_{O,p_3,p_4}$ at $p_4$, there cannot be an interior point of $\ell_3^*$ where the maximal distance to $P_{O,p_3,p_4}$ is reached, and $p_4^*$ must be the farthest point from $P_{O,p_3,p_4}$ in $\ell_3^*$. In particular, $\ell_3^*$ lies on one side of $P_{O,p_3,p_4}$. On the other hand, $\ell_3^*$ is strictly concave with respect to $\Omega^*$. Moreover, $\ell_3^*$ has length one and it goes out orthogonally from $P_{O,p_3,p_4}$ at $p_3^*$. Since the segment orthogonal to $P_{O,p_3,p_4}$ from $p_3^*$ to $\{y=0\}$ has length one, we deduce that $\ell_3^*$ cannot intersect $\{y=0\}$, and it is contained in $W^*$.

\begin{figure}[htb!]
  \centering \includegraphics[width=.5\textwidth]{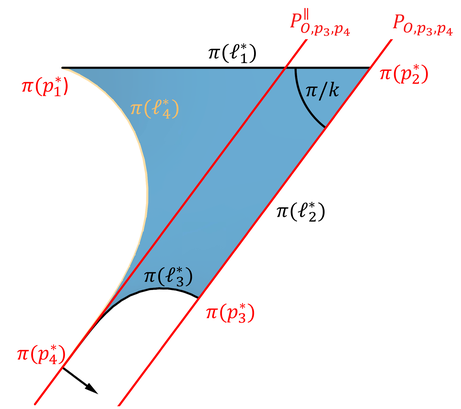}
  \caption{The projection of the surface $M^*$ on $\{z=0\}$.}
  \label{fig:projection_k}
\end{figure}

It just remains to study the behaviour of the curve $\ell_4^*$.  As in the case of the tensile catenoid, $\ell_4^*$ is an asymptotic curve with constant curvature $1$ whose curvature vector coincides with the surface outer conormal along $\ell_4^*$.  From Lemma~\ref{l:Nhoriz_k} we know that the vertical coordinate of $N^*$ along $\ell_4^*$ never vanishes, so the projection of the curve is concave or convex with respect to $\Omega^*$. By Remark~\ref{r:cusp_k}, we deduce that it must be concave.

From the fact that the total variation of $N$ along $\ell_3$ is $\frac \pi 2$ and $\ell_3,\ell_4$ are asymptotic at $p_4$, we get that $N^*(p_4^*)$ is orthogonal to $P_{O,p_3,p_4}$.

We call $\tilde W^*\subset W^*$ the wedge region bounded by the half-planes with boundary the $z$-axis and passing through $p_1^*$ and $p_4^*$, see picture .  Since $N^*(p_4^*)$ arrives orthogonally to the parallel plane to $P_{O,p_3,p_4}$ passing through $p_4^*$, we deduce that $\ell_4^*$ is contained in $\tilde W^*$ near $p_4^*$.

Take a vertical plane $P$ passing through the origin and intersecting $\tilde W^*$. We consider the farthest point $q$ of $M^*\cap P$ from the $z$-axis. We observe that this point cannot be interior to $M^*$, as the Gauss map at that point would be horizontal, in contradiction to the fact that $M^*$ is a vertical multigraph.  Hence $q$ must be a point in $\ell_3^*\cup \ell_4^*$. We observe that $\ell_3^*\cap \tilde W^*$ (if it is non-empty), together with a segment in $\partial \tilde W^*\cap\{z=0\}$, bounds a convex domain in $\{z=0\}$, which we call~$D$. In a neighborhood of $\ell_3^*$, $M^*$ does not project on $D$ ($\ell_3^*$ is concave with respect to $\Omega^*$). We conclude that $q$ cannot be in $\ell_3^*$. Hence $q\in\ell_4^*$. 
We prove, as in the case of the tensile catenoid, that there cannot be more points in $P\cap\ell_4^*$.  Therefore, we obtain that any vertical half-plane in $\tilde W^*$ with the $z$-axis as the boundary necessarily intersects $\ell_4^*$ in exactly one point.

We conclude that $\ell_4^*$ must be embedded with injective vertical projection, and $M^*$ is a vertical graph. 

To finish the proof of the existence of the tensile $k$-noid, we only need to extend $M^*$ by reflection symmetry on the planes containing the boundary curves $\ell_1^*,\ell_2^*$ and $\ell_3^*$. Since $M^*\subset W^*$, it remains to prove that $M^*\subset\{z>0\}$ to obtain that the extended surface is embedded.

By the mean value theorem, a point of $M^*\cap\{z<0\}$ would produce a point where the conormal to the boundary curve $\ell_4^*$ changes sign. As in the proof of the tensile catenoid, the existence of such a point would imply the existence of a point in $M$ where the tangent vector of $\ell_4$. Thus $\ell_4$ would have a local extremum, a contradiction to Lemma \ref{l:rotation}.

Finally, by Remark~\ref{r:cusp_k} we get that the tensile $k$-noid has cusps at the points $p_1^*$, $p_4^*$ and their symmetric ones.
\end{proof}

\bibliographystyle{plain}
\bibliography{Biblio.bib}

\end{document}

%% file: settings.tex
\setlist{nolistsep}

\graphicspath{{figures/}}